\DeclareMathAlphabet{\mathpzc}{OT1}{pzc}{m}{it}
\theoremstyle{plain}
\newtheorem{thm}{Theorem}[section]
\newtheorem{cor}[thm]{Corollary}
\newtheorem{lem}[thm]{Lemma}
\newtheorem{prop}[thm]{Proposition}
\newtheorem{conj}[thm]{Conjecture}
\newtheorem*{maintheorem*}{Main Theorem}
\newtheorem*{thm*}{Theorem}
\newtheorem*{thma*}{Theorem A}
\newtheorem*{thmaa*}{Theorem A'}
\newtheorem*{thmb*}{Theorem B}
\newtheorem*{thmo*}{Theorem 1.1}
\newtheorem*{thmc*}{Theorem C}
\newtheorem*{thmd*}{Theorem D}
\newtheorem*{thmf*}{Theorem 4.1}
\newtheorem*{remark*}{Remark}
\newtheorem*{conjecture*}{Conjecture}
\newtheorem*{prop*}{Proposition}
\newtheorem*{lem*}{Basic Lemma}
\theoremstyle{definition}
\newtheorem*{proofc*}{Proof of Theorem C}
\newtheorem{definition}[thm]{Definition}
\newtheorem{remark}[thm]{Remark}
\def\bbz{\mathbb{Z}}
\def\bba{\mathbb{A}}
\def\bbb{\mathbb{B}}
\def\bbn{\mathbb{N}}
\def\bbg{\mathbb{G}}
\def\bbu{\mathbb{U}}
\def\bbp{\mathbb{F}}
\def\bbl{\mathbb{L}}
\def\bfr{\mathfrak{B}}
\def\vare{\varepsilon}
\def\tbf{\mathbf{t}}
\def\l{\langle}
\def\r{\rangle}
\def\la{\lambda}
\def\p{\prod_{\nu\in S}}
\def\tp{\prod_{\nu\in\tilde{S}}}
\def\u{\mathcal{U}}
\def\ubf{\mathbf{u}}
\def\v{\mathcal{V}}
\def\imp{\mbox{Im}(\phi)}
\def\impt{\mbox{Im}(\phi_2)}
\def\nhu{N_H(\u)}
\def\G{{}^{\tau}\Delta(G)}
\def\Gnu{{}^{\tau}H(\nu)}
\def\n{\mathcal{U}}
\def\nhn{N_H(\mathcal{U})}
\def\haar{m_1\times m_2}
\def\s{\mathpzc{s}}
\def\tp{\mathpzc{t}}
\def\h{\hspace{1mm}}
\def\hh{\hspace{.5mm}}
\begin{document}

\title[A joinings classification in positive characteristic] {A joining classification and a special case of Raghunathan's
conjecture in positive characteristic (with an appendix by Kevin
Wortman)}

\begin{abstract}
We prove the classification of joinings for maximal horospherical subgroups acting on homogeneous spaces without any restriction on
the characteristic. 
Using the linearization technique we deduce a special case of Raghunathan's  orbit closure conjecture.
In the appendix quasi-isometries of higher rank lattices in semisimple algebraic groups over fields of positive characteristic are 
characterized.
\end{abstract}

\subjclass[2000]{ 37D40, 37A17, 22E40} \keywords{
Invariant measures, measure rigidity, local fields, positive
characteristic}

\author{Manfred Einsiedler and Amir Mohammadi}
\thanks {M.E. was supported by DMS grant 0622397}
\address{Mathematics Department, The Ohio State University, 231 W. 18th Avenue, Columbus, Ohio 43210.}
\address{Mathematics Dept., Yale University, 10 Hillhouse Ave. 4th floor, New Haven CT 06511,}

\maketitle

\section{Introduction}
\subsection{Statements of the main results}

Let $K$ be a global field and let $\bbg$ be a connected, simply connected, almost simple group defined over $K.$ Let $S$ be a finite set of places of $K$. We let $G=\prod_{\nu\in S}\bbg(K_{\nu})$. Furthermore we will denote $G_{\nu}=\bbg(K_{\nu})$. Recall that an arithmetic lattice compatible with the $K$-group $\bbg$ is a lattice commensurable with the subgroup of $G$ consisting of all matrices (in a particular representation as a linear group) with entries in the ring of $S$-integers.  Now fix $\Gamma_1$ and $\Gamma_2$ two irreducible arithmetic lattices of $G$ once and for all and let $H=G\times G$, and let $\bbg_1$ and $\bbg_2$ be the associated $K$-groups which we assume give rise to the same $G$. We denote by $\Delta(G)=\{(g,g):g\in G\}$ the diagonally embedded $G$ in $H$, more generally for any automorphism $\tau$ of $G$ we will use the notation ${}^{\tau}\Delta=(1\times\tau)(\Delta)$. For $i=1,2$ let $\pi_i$ denote the projection onto the $i$-th factor. Let $X_i=G/\Gamma_i$ for $i=1,2$ and let $X=X_1\times X_2=H/\Gamma_1\times\Gamma_2$. We will use $\pi_i$ to denote the projection from $X$ onto $X_i$ as well. Let $\nu\in S$ be an arbitrary place and fix a minimal parabolic subgroup $\mathbb{P}_{\nu}$ of $\bbg$ defined over $K_{\nu}$ further let $P=\mathbb{P}_{\nu}(K_{\nu}).$ Let $U_{\nu}=\mathbb{U}_{\nu}(K_{\nu
 } )$ where $\mathbb{U}_{\nu}$ is the  unipotent radical of $\mathbb{P}_{\nu}.$ 
  
  A joining $\mu$ (for the $U_\nu$-actions on $X_1$ and $X_2$) is a probability measure on $X$ that is invariant under $(u,u)$ for $u\in U_\nu$
  and satisfies that the push-forward $(\pi_i)_*(\mu)$ under the projections gives the Haar measure $m_{X_i}$ for $i=1,2$.

\begin{thm}\label{joining}
If $\mu$ is an ergodic joining for the action of $U_{\nu}$ on $X_1$ and $X_2$ then one of the following holds
\begin{itemize}
\item[(i)] $\mu$ is the Haar measure $m$ on $X$, or
\item[(ii)] $\mu$ is the $\G$-invariant measure on some closed orbit of $\G$ in $X,$ furthermore the automorphism $\tau$ is the inner automorphism induced by an element $z\in Z_{G}(U_{\nu}).$
\end{itemize} 
Furthermore, in the latter case $\mu$ is the Haar measure of the closed orbit $(g_1,g_2)F_0\Gamma_1\times\Gamma_2$, where $(g_1,g_2)\in G\times G$ is any point of the support of $\mu$, $F_0={}^{\tau_0}\Delta(G)$ consists of the $K_S$-points of a subgroup $\mathbb{F}_0$ of $\bbg_1\times \bbg_2$ defined over $K$, and finally $\tau_0$ is some inner automorphism.
\end{thm}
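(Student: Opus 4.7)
The plan is to reduce Theorem \ref{joining} to a measure classification for a horospherical action on $X$. The first observation is that $\Delta(U_\nu) \subset H$ is itself a maximal horospherical subgroup of $H = G \times G$: it is precisely the subgroup contracted by $(a,a)^n$ for any $a \in G_\nu$ contracting $U_\nu$. Applying the measure classification for horospherical actions to the $\Delta(U_\nu)$-invariant ergodic measure $\mu$ on $X = H/(\Gamma_1 \times \Gamma_2)$ — this being the core technical engine of the paper, and what requires working carefully in positive characteristic — should yield that $\mu$ is the Haar measure on a single closed orbit $L \cdot x_0$, where $x_0 = (g_1,g_2)(\Gamma_1 \times \Gamma_2)$ and $L$ is the set of $K_S$-points of a $K$-subgroup $\mathbb{L} \subset \bbg_1 \times \bbg_2$ containing $\Delta(U_\nu)$.

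The next step is to use the joining hypothesis to pin down $\mathbb{L}$. Since $(\pi_i)_*\mu = m_{X_i}$, each projection $\pi_i(L)$ must be all of $G$, so $\mathbb{L}$ projects surjectively onto both factors. A Goursat-type analysis of $K$-subgroups of $\bbg_1 \times \bbg_2$ with this property, combined with the almost-simplicity of $\bbg_i$, forces $\mathbb{L}$ to be either all of $\bbg_1 \times \bbg_2$ (giving alternative (i)) or the graph $\{(g,\tau(g)) : g \in \bbg\}$ of a $K$-isogeny $\tau$ between $\bbg_1$ and $\bbg_2$. The latter case is precisely ${}^{\tau}\Delta(G)$ after passing to $K_S$-points, and one still needs to identify $\tau$.

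From $\Delta(U_\nu) \subset {}^{\tau}\Delta(G)$ we extract the pointwise condition $\tau(u) = u$ for all $u \in U_\nu$. This is what allows one to exclude the pathologies peculiar to positive characteristic: a Frobenius twist would act as $u \mapsto u^{p^k}$ nontrivially on the additive group $U_\nu$, so is ruled out, and $\tau$ is therefore a genuine $K$-automorphism of $\bbg$. Since $\bbg$ is simply connected and almost simple, $\tau$ is the composition of an inner automorphism with a diagram automorphism; but no nontrivial diagram automorphism fixes a maximal unipotent pointwise (it permutes the simple root subgroups), so $\tau = \mathrm{Int}(z)$ is inner. The relation $zuz^{-1} = u$ for all $u \in U_\nu$ then places $z \in Z_G(U_\nu)$, as required. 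The description of $\mu$ as the Haar measure of the closed orbit $(g_1,g_2) F_0 (\Gamma_1 \times \Gamma_2)$ with $F_0 = {}^{\tau_0}\Delta(G)$ and $\mathbb{F}_0$ defined over $K$ then follows by reading off the structure obtained.

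The main obstacle is the horospherical measure classification in the first step: without the exponential map, with inseparable isogenies available, and with the entropy arguments one uses to promote $\Delta(U_\nu)$-invariance to invariance under a larger group $L$ becoming considerably more delicate, this is where all the real work lies. By contrast, the Goursat and automorphism analysis in the remaining steps is essentially algebraic once the algebraic form of $\mu$ is available, modulo a routine verification of rationality of $\mathbb{F}_0$ using the arithmetic structure of $\Gamma_1, \Gamma_2$.
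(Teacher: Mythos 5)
Your argument collapses at the first step. The subgroup $\Delta(U_\nu)=\{(u,u):u\in U_\nu\}$ is \emph{not} a horospherical subgroup of $H=G\times G$. If $a\in G_\nu$ contracts $U_\nu$ (i.e.\ $U_\nu=W^+(a)$), then the subgroup contracted by $(a,a)$ is $W^+((a,a))=W^+(a)\times W^+(a)=U_\nu\times U_\nu$, which strictly contains the diagonal. More generally, every horospherical subgroup of a product $G\times G$ is itself a product $W^+(g_1)\times W^+(g_2)$, and $\Delta(U_\nu)$ is not of this form. So there is no ``measure classification for horospherical actions on $X$'' to invoke: the acting group is only half of a horospherical subgroup of the ambient group, and this is exactly why the joining problem is hard and why the introduction describes the acting group as ``somewhat small in comparison to the ambient space.'' The horospherical case in positive characteristic is a separate (and easier) result cited in the introduction; it does not apply here.

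Once the first step is gone, everything downstream is unsupported, because you never produce the algebraic group $\mathbb{L}$ on which the Goursat and automorphism analysis is supposed to act. The actual proof has to manufacture the extra invariance by hand: one uses polynomial divergence of nearby $\Delta(U_\nu)$-orbits (the quasi-regular maps of Margulis--Tomanov), where the joining hypothesis enters crucially in choosing displacements $g_n$ whose second component has a large $U_{-\gamma}$-part relative to the first component (Lemma \ref{sequ}); this yields either the Haar measure or an element $\s=({}^z\tp,\tp)$ of class $\mathcal{A}'$ preserving $\mu$ (Proposition \ref{torusinv}). Entropy comparison for $\s$ (Theorem \ref{entropy}, via Corollary \ref{displacement}) then upgrades invariance to the opposite diagonal ${}^{(z,e)}\Delta(W^-(t))$, and the two opposite horospherical diagonals generate ${}^{(z,e)}\Delta(G_\nu)$. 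Only after that does an algebraic argument (Borel density, Zariski closure of the stabilizer lattice, strong approximation) identify the closed orbit and the rationality of $\mathbb{F}_0$ --- and it is this arithmetic step, not a Goursat lemma applied to a pre-existing $\mathbb{L}$, that rules out Frobenius twists and pins down $\tau$ as inner by $z\in Z_G(U_\nu)$. Your closing remarks about diagram automorphisms and Frobenius are sensible observations, but they address a stage of the argument that your proposal never legitimately reaches.
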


We remind the reader that the above generalizes Ratner's joining classification \cite{Rat1} to the positive characteristic setting and is indeed a special case of Ratner's classification theorem for measures invariant under unipotent subgroups if $G$ is a Lie group \cite{Rat5} or if $G$ is a product of algebraic groups \cite{R3, MT} of characteristic zero. However, for positive characteristic there is no general classification known.  The case of positive characteristic horospherical subgroups has been studied by the second named author \cite{AmirHoro}.  The first named author has obtained in joint work with Ghosh \cite{Einsiedler-Ghosh} a classification of measures invariant under semisimple\footnote{Quite likely
one can use the proof in \cite{MT} for any unipotent subgroup in large enough positive characteristic, but the proof in \cite{Einsiedler-Ghosh} relies on a much simpler argument which requires the assumption that the acting group is semisimple.} subgroups in sufficiently big positive characteristic. We note that in the first paper the acting group is quite large inside the ambient space and there is a rather restrictive assumption on the characteristic in the second paper.  The current work does not make any restrictions on the characteristic and is still a case where the acting group is somewhat small in comparison to the ambient space. However, our assumptions still put us in the situation where there is no need to use restriction of scalars which in the general case one will not be able to avoid. Roughly speaking the main difficulty for a general measure classification is to find a bound on the degree of the field extension used in the restriction of scalars.

Using the well-known linearization technique we obtain from the above a special case of Raghunathan's orbit closure classification, which in the case of Lie groups is known in maximal generality due to Ratner's orbit closure theorem \cite{Rat4}.

\begin{thm}\label{orbitclosure}
Let the notations and conventions be as above. Then every orbit of $\Delta(G)$ in $G\times G/\Gamma_1\times\Gamma_2$ is either closed or dense.
\end{thm}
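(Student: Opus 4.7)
The plan is to argue by contradiction. Suppose some $x \in X$ has $\Delta(G)$-orbit that is neither closed nor dense, and set $Y := \overline{\Delta(G)x}$, a proper $\Delta(G)$-invariant closed subset of $X$ strictly containing $\Delta(G)x$. I will produce a $\Delta(U_\nu)$-invariant ergodic joining $\mu$ supported on $Y$ and obtain a contradiction via Theorem \ref{joining}.

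To construct $\mu$: since $\Delta(G)x$ is not closed, choose $g_n \in \Delta(G)$ with $g_n x \to y \in Y\setminus \Delta(G)x$, the sequence escaping $\mathrm{Stab}_{\Delta(G)}(x)$. Using a polar decomposition adapted to the place $\nu$, write $g_n = \Delta(k_n a_n u_n)$ so that $\Delta(a_n) \in \Delta(A_\nu)$ drifts into the Weyl chamber along which $\Delta(U_\nu)$ is contracted by conjugation by $\Delta(a_n)^{-1}$. The standard ``extra returns'' construction (as used in \cite{Rat5, AmirHoro}) then converts these accumulating returns into a $\Delta(U_\nu)$-invariant probability measure $\mu$ supported on $Y$; Dani--Margulis non-divergence of unipotent orbits in positive characteristic rules out loss of mass. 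The averaging across $\Delta(k_n)$ combined with the $\Delta(a_n)^{-1}$-contraction of $\Delta(U_\nu)$ ensures that the projections $(\pi_i)_*\mu$ equal the Haar measures $m_{X_i}$, so $\mu$ is a joining. After passing to an ergodic component I may assume $\mu$ is ergodic.

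By Theorem \ref{joining}, either $\mu = m$, in which case $X = \mathrm{supp}(m) \subseteq Y$ contradicts $Y \subsetneq X$, or $\mu$ is the invariant measure on a closed orbit $F_0 y_0 = {}^{\tau_0}\Delta(G)y_0 \subseteq Y$, where $\tau_0(g) = zgz^{-1}$ is inner and $z \in Z_G(U_\nu)$. In this second case I invoke the linearization technique of Dani--Margulis: the locus of points $w \in X$ whose $\Delta(U_\nu)$-orbit closure lies inside some translate of an ${}^{\tau}\Delta(G)$-orbit is a countable union of algebraic subvarieties of $X$, built from the polynomial action of $\Delta(U_\nu)$ on appropriate rational representations of $H$. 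A recurrence argument using density of $\Delta(G)x$ in $Y$, together with the identity $F_0 y_0 = (1,z)\Delta(G)\bigl((1,z^{-1})y_0\bigr)$ exhibiting $F_0 y_0$ as a translate of a closed $\Delta(G)$-orbit, forces $\Delta(G)x$ itself to lie inside such a translate, and hence inside a closed $\Delta(G)$-orbit --- contradicting non-closedness of $\Delta(G)x$. The main obstacle is this last step: the Dani--Margulis linearization machinery was developed in characteristic zero, so one must verify that the requisite polynomial transversal dynamics are uniform in positive characteristic. Since $U_\nu$ is the unipotent radical of a minimal parabolic, its action on any rational representation of $H$ is polynomial of bounded degree, which allows the standard quantitative recurrence estimates of Dani--Margulis to be adapted to the present setting.
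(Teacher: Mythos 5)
Your overall architecture --- take unipotent averages at the point, recognize the limit as a joining, classify it via Theorem \ref{joining}, and handle the non-Haar case by linearization --- is indeed the paper's strategy. But the two steps you treat as routine are exactly where the content lies, and as written neither closes.

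First, the joining construction. A weak-$*$ limit of $\Delta(U_\nu)$-averages based at $x=(x_1,x_2)$ projects under $\pi_i$ to a limit of $U_\nu$-averages based at $x_i$, and this equals $m_{X_i}$ only if $x_i$ is measure-theoretically generic for the $U_\nu$-action on $X_i$ --- which an arbitrary $x_i$ need not be. ``Averaging across $\Delta(k_n)$'' does not supply this: the polar-decomposition/extra-returns device you invoke produces $\Delta(U_\nu)$-invariance of the limit, not fullness of its projections. The paper's fix is to observe that $U_\nu$ acts ergodically on each $X_i$ (since $\bbg$ is simply connected and $\Gamma_i$ is irreducible), so for $m_G$-a.e.\ $g$ the translate $(g,g)x$ has generic components; since $(g,g)\in\Delta(G)$, this translate lies in the same orbit and one runs the argument there. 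Without locating generic points inside the orbit itself, ``supported on $\overline{\Delta(G)x}$'' and ``projects to Haar'' are competing requirements you cannot simultaneously guarantee.

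Second, the endgame. Even granted an ergodic joining on $Y$ equal to the Haar measure of a closed orbit ${}^{\tau_0}\Delta(G)y_0$, that orbit may lie entirely in $Y\setminus\Delta(G)x$ and by itself says nothing about whether $\Delta(G)x$ is closed. You correctly flag that linearization must enter here, but the ``recurrence argument'' is left unspecified, and your closing remark about bounded polynomial degrees does not substitute for it (the positive-characteristic linearization theorem is Theorem \ref{linearization}, whose proof occupies a substantial part of the paper). The paper's route is different and avoids the contradiction framework: in Proposition \ref{unipotentorbit}, linearization shows the limit measure gives zero mass to every singular set $X(F,\n)$ with $F\neq H$, hence is Haar unless the base point already lies in some $X(F,\n)$; then, since $\mathcal{F}$ is countable, either some generic translate $(gg_1,gg_2)$ avoids all the sets $\mathcal{E}_F=\{g:\n(gg_1,gg_2)\subset(gg_1,gg_2)F\}$, whence the orbit is dense, or $m_G(\mathcal{E}_{F_0})>0$ for some $F_0$, which by Zariski density of a positive-measure set of $g$ forces $(g_1,g_2)^{-1}\Delta(G)(g_1,g_2)\subset F_0$ and the orbit is closed. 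I would restructure your argument along these lines: fix the genericity issue by moving within the orbit, and replace the contradiction at the end with this dichotomy.
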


It is easy to show that the above give also an immediate dichotomy for the product $\Gamma_1\Gamma_2$ of two lattices. Either $\Gamma_1\Gamma_2$ consists of finitely many $\Gamma_2$-cosets (i.e.\ $\Gamma_1\Gamma_2/\Gamma_2$ is a finite subset of $G/\Gamma_2$) or $\Gamma_1\Gamma_2$ is dense in $G$. In the appendix, authored by Kevin Wortman, this is used as one missing ingredient for the classification of the quasi-isometries of higher rank lattices in semisimple algebraic groups over fields of positive characteristic, e.g.\ for $\operatorname{PGL}_3(\mathbb{F}_p[t])$. We refer to the appendix for the definitions and for further comments regarding the history of this problem.

\section{Preliminary and notations}
\subsection{$K_S$-algebraic groups.} Let $K$ be a global function field of characteristic $\mathfrak{p}.$ Let $S$ be a finite set of places for any $\nu\in S$ we let $K_{\nu}$ denote the completion of $K$ with respect to ${\nu}$ and let $\varpi_{\nu}$ be a uniformizer for $K_{\nu}$ fixed once and for all. We let $K_S=\p K_{\nu}.$ We endow $K_S$ with the norm $|\cdot|=\max_{\nu\in S} |\cdot|_{\nu}$ where $|\cdot|_{\nu}$ is a norm on $K_{\nu}$ for each $\nu\in S.$ A $K_S$ algebraic group $\mathbb{A}$ (resp. variety $\mathbb{B}$) is the formal product of $\p \mathbb{A}_{\nu}$ of $K_{\nu}$ algebraic groups (resp. $\p \mathbb{B}_{\nu}$ of $K_{\nu}$ algebraic varieties). As is clear from the definition of $K_S$-varieties the usual notations from elementary algebraic geometry theory e.g.\ regular maps, rational maps, rational point etc.\ are defined componentwise, and we will take this as to be understood and use these notions without further remarks. As usual there are two topologies on $\mathbb{B}(K_S)$ the Hausdorff topology and the Zariski topology. When we refer to the Zariski topology we will make this clear. Hence if in some topological statement we do not give reference to the particular topology used, then the one which is being considered is the Hausdorff topology.  

Let  $\mathbb{A}$ be a $K_S$-algebraic group and $\mathbb{B}$ a $K_S$-algebraic subgroup. And let $A=\mathbb{A}(K_S)$ and $B=\mathbb{B}(K_S).$ For any $g\in A$ normalizing $B$ we set
$$W_B^+(g)=\{x\in B\hh|\h g^{n}xg^{-n}\rightarrow e\h\h\mbox{as}\h\h n\rightarrow-\infty\}$$
$$\begin{array}{l}W_B^-(g)=\{x\in B\hh|\h g^{n}xg^{-n}\rightarrow e\h\h\mbox{as}\h\h n\rightarrow+\infty\} \vspace{.75mm}\\ Z_B(g)=\{x\in B\hh|\h gxg^{-1}=x\}\end{array}$$
If $g\in B$ and it is clear from the context we sometimes omit the subscript $B$ above. Similarly if $g\in B$ we let $P_{g}$ (resp. $P_{g}^-$) be the set of $x\in B$ such that $\{g^{n}xg^{-n}\}_{n<0}$ (resp. $\{g^{n}xg^{-n}\}_{n>0}$) is contained in a compact subset of $G$ and define $M_{g}=P_{g}\cap P^-_g.$ Note that $W_B^{\pm}(g),\hh M_{g}$ and $Z_B(g)$ are the groups of $K_S$-points of $K_S$-algebraic subgroups $\mathbb{W}_B^{\pm}(g),\hh \mathbb{M}$ and $\mathbb{Z}_B(g)$ respectively.

Let $\bbg$ be semi-simple connected defined over $K_{S}$ and let $G=\bbg(K_S).$ Let $g\in G$ be an element such that $\mbox{Ad}\hh g$ (Ad is the adjoint representation of the algebraic group on its Lie algebra) has at least one eigenvalue of absolute value $\neq 1.$ One has $\mathbb{W}^-(g)\cdot\mathbb{M}_g\cdot\mathbb{W}^+(g)$ is Zariski open in $\bbg$ and the natural map of the product $\mathbb{W}^-(g)\times\mathbb{M}_g\times\mathbb{W}^+(g)$ to $\mathbb{W}^-(g)\cdot\mathbb{M}_g\cdot\mathbb{W}^+(g)$ is a $K_S$ isomorphism of varieties. In particular 
$$\mathcal{D}(g)=W^-(g)M_gW^+(g)=(\mathbb{W}^-(g)\cdot\mathbb{M}_g\cdot\mathbb{W}^+(g))(K_S)$$
is an open neighborhood of the identity. These are well-known facts, see for example~\cite{Pa}.
 
We say an element $e\neq s\in\mathbb{A}(K_S)$ is of class $\mathcal{A}$ if $g=(g_{\nu})_{\nu\in S}$ is diagonalizable over $K_{S}$ and for all $\nu\in S$ the component $g_{\nu}$ has eigenvalues which are integer powers of the uniformizer $\varpi_{\nu}$ of $K_{\nu}$. We will need slight generalization of this notion which we define. An element $s\in G$ is said to be from class $\mathcal{A}'$ if $s=s'\hh\gamma$ where $s'\neq e$ is an element of class $\mathcal{A}$ and $\gamma$ commutes with $s'$ and generates a compact subgroup in $G.$ Note that if $s=s'\hh\gamma$ is an element from class $\mathcal{A}'$ then it is clear from the definitions that $W^{\pm}(s)=W^{\pm}(s')$ and we also have $Z(s')=M_s.$

With these notations if $g=s\in G$ is an element from class $\mathcal{A}$ then one has $M_s=Z(s)$ and we have $W^-(s)Z(s)W^+(s)$ is a Zariski open dense subset of $G$ which contains the identity.

\subsection{Ergodic measures on algebraic varieties.}
Let $\mathbb{A}$ be a $K_S$-algebraic group acting $K_S$-rationally on a $K_S$-algebraic variety $\mathbb{M}$. Let $B$ be a subgroup of $A=\mathbb{A}(K_S)$ generated by one parameter $K_S$-split unipotent algebraic subgroups and elements from class $\mathcal{A}.$ The following is proved in~\cite{MT} (which in return relies almost directly on the behavior of algebraic orbits \cite{BZ}).

\begin{lem}\label{bz-measure}(cf.~\cite[Lemma 3.1]{MT})
Let $\mu$ be an $A$-invariant Borel probability measure on $M=\mathbb{M}(K_S)$. Then $\mu$ is concentrated on the set of $A$-fixed points in $M.$ In particular if $\mu$ is $A$-ergodic then $\mu$ is concentrated at one point.   
\end{lem}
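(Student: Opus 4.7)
The plan is to exploit the Bernstein--Zelevinsky locally-closed-orbit theorem together with Poincar\'e recurrence and the contraction dynamics of class $\mathcal{A}$ elements, following the template of \cite[Lemma 3.1]{MT} adapted to $K_S$-points. By the ergodic decomposition one may assume $\mu$ is ergodic, in which case concentration on the fixed-point set is equivalent to $\mu$ being a Dirac mass at a single fixed point.

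First I would invoke the classical Bernstein--Zelevinsky fact that every orbit of a $K_S$-algebraic group acting $K_S$-rationally on an algebraic variety is locally closed, and that $A/\mathrm{Stab}_A(x_0) \to A\cdot x_0$ is a homeomorphism of $K_S$-analytic spaces (cf.\ \cite{BZ}). Since the orbit equivalence relation is then countably separated, the ergodic measure $\mu$ concentrates on a single orbit $A\cdot x_0 \simeq A/H$, where $H=\mathrm{Stab}_A(x_0)=\mathbb{H}(K_S)$ for an algebraic subgroup $\mathbb{H}\subseteq\mathbb{A}$. The pushforward of $\mu$ to $A/H$ is a finite invariant measure, and the task reduces to showing that every generator of $B$ lies in $H$. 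For a one-parameter $K_S$-split unipotent subgroup $U\subseteq B$, Poincar\'e recurrence on $A/H$ produces, for $\mu$-a.e.\ $gH$, unbounded $u_n\in U$ with $u_n gH\to gH$; using the local-product structure afforded by the BZ homeomorphism and the algebraicity of the inclusion $H\hookrightarrow A$, one passes to the limit to obtain $g^{-1}Ug\subseteq H$ on a Zariski dense set of $g$, whence $U\subseteq H$.

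The main obstacle is a class $\mathcal{A}$ generator $s$. Here I would use the open product decomposition $\mathcal{D}(s)=W^-(s)M_sW^+(s)$, a neighborhood of $e$ in $A$ that is homeomorphic to the direct product, together with the contracting action of $s^n$ on $W^+(s)$ and of $s^{-n}$ on $W^-(s)$. Applying Poincar\'e recurrence to obtain $s^{n_k}gH\to gH$ and lifting the recurrence into $\mathcal{D}(s)$-coordinates, the $W^\pm(s)$-components of the lift must collapse to the identity under iteration, leaving $s\in gHg^{-1}$ for $\mu$-a.e.\ $g$; varying $g$ then yields $s\in H$. The delicate point is that in positive characteristic one has no Lie-algebra infinitesimal tools, so all of these contraction arguments must be carried out directly on $K_\nu$-points using the algebraic decomposition $\mathcal{D}(s)$ and the BZ homeomorphism; the class $\mathcal{A}$ hypothesis (eigenvalues integer powers of $\varpi_\nu$) is essential for controlling the $K_\nu$-adic contraction rates. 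Once $B\subseteq H$, the orbit $A\cdot x_0$ consists of $B$-fixed points, and ergodicity yields $\mu=\delta_{x_0}$.
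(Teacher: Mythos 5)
The paper offers no proof of this lemma at all --- it is quoted verbatim from Margulis--Tomanov \cite[Lemma 3.1]{MT}, whose proof rests on the Bernstein--Zelevinsky fact that algebraic orbits over local fields are locally closed. Your architecture (ergodic decomposition, BZ to put the ergodic measure on a single locally closed orbit $A\cdot x_0\simeq A/H$, then recurrence for each generator of $B$) is exactly the right reconstruction, and the unipotent step is essentially sound once you make the key point explicit: a proper Zariski-closed subgroup of a one-dimensional split unipotent group is finite, so $U/(U\cap gHg^{-1})$ is noncompact and the orbit map is \emph{proper}, which is what contradicts recurrence along an unbounded sequence $u_n$. (Two small slips: from $g^{-1}Ug\subseteq H$ for a.e.\ $g$ you should conclude directly that $\mu$-a.e.\ point is $U$-fixed, not that ``$U\subseteq H$''; and at the end, $B\subseteq H$ would only make $x_0$ fixed, not the whole orbit --- but again the a.e.\ statement is all you need, and countably many generators then give the full conclusion.)

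The genuine gap is in the class $\mathcal{A}$ step. The decomposition $\mathcal{D}(s)=W^-(s)M_sW^+(s)$ and the contraction of $s^n$ on $W^{\pm}(s)$ concern the \emph{conjugation} action of $s$ on the ambient group; the recurrence $s^{n_k}gH\to gH$ takes place under \emph{left translation} on $A/H$, where $H$ is in no way adapted to $s$, so ``the $W^{\pm}(s)$-components of the lift collapse under iteration'' has no meaning --- writing $g^{-1}s^{n_k}g=\epsilon_k h_k$ with $\epsilon_k\to e$ and $h_k\in H$, nothing forces the $\mathcal{D}(s)$-coordinates of $\epsilon_k$ to interact with powers of $s$ at all. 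The correct mechanism is a second application of BZ: let $\mathbb{T}$ be the Zariski closure of $\langle g^{-1}sg\rangle$, a split torus (this is where the class $\mathcal{A}$ hypothesis enters). Its orbit through $gH$ is locally closed and homeomorphic to $T/(T\cap gHg^{-1})$, a quotient by an algebraic subgroup; the image $\bar{s}$ of $g^{-1}sg$ there still has character values that are integer powers of uniformizers, hence generates a \emph{discrete} subgroup. Recurrence $\bar{s}^{\,n_k}\to e$ with $n_k\to\infty$ then forces $\bar{s}=e$, i.e.\ $s\in gHg^{-1}$. Without replacing your contraction argument by this discreteness argument the class $\mathcal{A}$ case is not proved.
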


\subsection{Homogeneous measures.}
Let $A$ be a locally compact second countable group and let $\Lambda$ be a discrete subgroup of $A$. Let $\mu$ be a Borel probability measure on $A/\Lambda$. Let $\Sigma$ be the closed subgroup of all elements of $A$ which preserve $\mu$. The measure $\mu$ is called {\it homogeneous} if there exists $x\in A/\Lambda$ such that $\Sigma\hh x$ is closed and $\mu$ is unique $\Sigma$-invariant measure on $\Sigma\hh x.$

\begin{lem}\label{normal-unimodular}(cf.~\cite[Lemma 10.1]{MT})
Let $A$ be a locally compact second countable group and $\Lambda$ a discrete subgroup of $A.$ If $B$ is a normal unimodular subgroup of $A$ and $\mu$ is a $B$-invariant $B$-ergodic measure on $A/\Lambda$ then $\mu$ is homogeneous and actually $\Sigma=\overline{B\Lambda}$.
\end{lem}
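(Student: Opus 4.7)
Let $\Sigma = \{a \in A : a_*\mu = \mu\}$, a closed subgroup of $A$ containing $B$. My plan is to identify $\Sigma$ with $H := \overline{B\Lambda}$ via two main steps: (1) show $\operatorname{supp}(\mu)$ is a single closed $H$-orbit, and (2) use unimodularity to promote the $B$-invariance of $\mu$ to $H$-invariance on that orbit. Note that normality of $B$ in $A$ implies that for each $a \in A$, the push-forward $a_*\mu$ is again $B$-invariant and $B$-ergodic: for $b \in B$,
\[b_*(a_*\mu) = (ba)_*\mu = a_*\bigl((a^{-1}ba)_*\mu\bigr) = a_*\mu,\]
since $a^{-1}ba \in B$ and $\mu$ is $B$-invariant.

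For step (1), $B$-ergodicity together with a standard countable-basis argument produces a $\mu$-conull set of points whose $B$-orbit is dense in $\operatorname{supp}(\mu)$. Pick such a generic $x_0 = g_0\Lambda$. Then $\operatorname{supp}(\mu) = \overline{Bx_0}$. Lifting to $A$ and using $Bg_0 = g_0B$ (from normality),
\[\pi^{-1}(\operatorname{supp}(\mu)) = \overline{Bg_0\Lambda} = g_0\overline{B\Lambda} = g_0 H,\]
so $\operatorname{supp}(\mu) = g_0H/\Lambda$, a closed orbit of $H$ acting by left translation on $A/\Lambda$.

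For step (2), after translating by $g_0^{-1}$ I may assume $\operatorname{supp}(\mu) = H/\Lambda$ and $\mu$ is a $B$-invariant $B$-ergodic probability measure on $H/\Lambda$. The goal is now to show $\mu$ coincides with the Haar measure on $H/\Lambda$. Lift $\mu$ to the locally finite right $\Lambda$-invariant measure $\tilde\mu$ on $H$; it is left $B$-invariant. Since $B$ acts freely by left translation on $H$ with orbits the right cosets $Bh$, disintegrating $\tilde\mu$ over the orbit space $B\backslash H$ yields a fiber measure on each orbit that is a scalar multiple of left-$B$-Haar. Unimodularity of $B$ guarantees this fiber measure is also right-$B$-invariant, hence compatible with the right $\Lambda$-action on $\tilde\mu$. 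Because $B\Lambda$ is dense in $H$, every $B$-orbit in $H/\Lambda$ is already dense, so the transverse disintegration on $B\backslash H/\Lambda$ is $B$-ergodic and concentrated on the single ``generic orbit,'' hence a Dirac. Combining, $\tilde\mu$ is proportional to a bi-invariant Haar on $H$, and $\mu$ equals the Haar measure on $H/\Lambda$; in particular $\mu$ is $H$-invariant.

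From $H$-invariance of $\mu$ one has $H \subseteq \Sigma$. Conversely, any $a \in \Sigma$ preserves $\operatorname{supp}(\mu) = H/\Lambda$ (after the translation), forcing $aH\Lambda = H\Lambda$ and, since $\Lambda \subseteq H$, $aH = H$, i.e. $a \in H$. This yields $\Sigma = H = \overline{B\Lambda}$, and by construction $\mu$ is the unique $\Sigma$-invariant probability measure on the closed orbit $\Sigma x_0$, establishing homogeneity. The main obstacle is step (2): unimodularity of $B$ is precisely what is needed so that the left-$B$-Haar fiber measure interacts correctly with the right $\Lambda$-invariance of $\tilde\mu$, and the density of $B$-orbits in $H/\Lambda$ (together with $B$-ergodicity) is what forces the transverse measure to be a single point and so yields the Haar identification.
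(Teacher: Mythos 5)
Your two-step architecture (locate the support as $g_0\overline{B\Lambda}/\Lambda$ using normality and ergodicity, then upgrade $B$-invariance to full invariance by lifting to $H=\overline{B\Lambda}$ and disintegrating over $B\backslash H$) is the right one, and step (1) is fine. The gap is in your identification of the transverse measure in step (2). You assert that, because every $B$-orbit in $H/\Lambda$ is dense, the transverse measure on $B\backslash H/\Lambda$ is ``concentrated on the single generic orbit, hence a Dirac.'' Read literally this says $\mu$ gives full mass to one $B$-orbit, which is false in general: for $A=\mathbb{R}^2$, $\Lambda=\mathbb{Z}^2$ and $B$ an irrational line one has $H=\mathbb{R}^2$, $\mu$ is Lebesgue on the torus, and every $B$-orbit is null. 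The underlying problem is that $B\backslash H/\Lambda$ fails to be countably separated precisely because $B\Lambda$ is dense in $H$, so ``takes only the values $0$ and $1$, hence is a Dirac'' is not available there --- and a Dirac conclusion is not what you need anyway. The correct statement concerns the transverse measure $\nu$ on $B\backslash H=H/B$, which (after replacing $B$ by its closure, which changes nothing) is an honest locally compact second countable group in which the image of $\Lambda$ is dense. Unimodularity of $B$ is what makes the left-Haar fiber measures transform without a Jacobian under right translation: writing $s(Bh)\gamma=b_1\,s(Bh\gamma)$ for a Borel cross-section $s$, the transported fiber measure picks up the factor $\Delta_B(b_1)$, which equals $1$ exactly because $B$ is unimodular. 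Hence right $\Lambda$-invariance of $\tilde\mu$ translates into invariance of $\nu$ under right translation by the dense image of $\Lambda$; a Radon measure on a locally compact group invariant under a dense set of translations is invariant under all of them (test against $f\in C_c$), so $\nu$ is a Haar measure on $H/B$. Reassembling, $\tilde\mu$ is left $H$-quasi-invariant with a continuous multiplicative character, which must be trivial because $\mu$ is a finite measure; only then is $\mu$ the $H$-invariant measure on $H/\Lambda$. With this repair the rest of your argument goes through, and note that ergodicity is then needed only in step (1).

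A second, smaller point: translating by $g_0^{-1}$ replaces $\mu$ by $(g_0^{-1})_*\mu$ and therefore conjugates its stabilizer, so what your argument actually yields is $\Sigma=g_0\overline{B\Lambda}\,g_0^{-1}=\overline{B\,g_0\Lambda g_0^{-1}}$, i.e.\ the closure of $B$ times the stabilizer of the generic point $x=g_0\Lambda$; this is the form in which the lemma is invoked in Step 3 of the proof of Theorem~\ref{joining}, and is how the concluding identity should be read.
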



\section{Polynomial like behaviour and the basic lemma}\label{secquasi}

Let $\mu$ be a probability measure on $X$ which is invariant and ergodic under the action of some unipotent $K_{S}$-algebraic subgroup of $H.$ The idea is to use polynomial like behaviour of the action of unipotent groups on $X$ to show that if certain ``natural obstructions" do not occur then one can construct new elements which leave $\mu$ invariant as well. The idea of using polynomial like behaviour of unipotent groups to construct new invariants goes back to earlier works, e.g. Margulis' celebrated proof of Oppenheim's conjecture~\cite{Mar2} using topological arguments and Ratner's seminal work on the proof of measure rigidity conjecture~\cite{Rat2, Rat3, Rat4}. We keep the language of~\cite{MT} as it is the most suitable one in our situation.

\subsection{Construction of quasi-regular maps}
Following~\cite[Section 5]{MT} we want to construct {\it quasi-regular maps}. This is essential in our construction of extra invariance for $\mu$. We first recall the definition of a quasi-regular map. We give the definition in the case of a local field, which we will need later, the $S$-arithmetic version is a simple modification.  

\begin{definition}\label{quasiregular}(cf.~\cite[Definition 5.3]{MT})
Let $\omega$ be any place in $S.$
\begin{itemize}
\item[(i)]Let $\mathbb{E}$ be a $K_{\omega}$-algebraic group, $\mathcal{W}$ a $K_{\omega}$-algebraic subgroup of $\mathbb{E}(K_{\omega})$ and $\mathbb{M}$ a $K_{\omega}$-algebraic variety. A $K_{\omega}$-rational map $f:\mathbb{M}(K_{\omega})\rightarrow\mathbb{E}(K_{\omega})$ is called $\mathcal{W}$-{\it quasiregular} if the map from $\mathbb{M}(K_{\omega})$ to $\mathbb{V}$ given by $x\mapsto\rho(f(x))p$ is $K_{\omega}$-regular for every $K_{\omega}$-rational representation $\rho:\mathbb{E}\rightarrow\mbox{GL}(\mathbb{V})$ and every point $p\in\mathbb{V}(K_{\omega})$ such that $\rho(\mathcal{W})p=p.$
\item[(ii)] If $E=\mathbb{E}(K_{\omega})$ and $\mathcal{W}\subset E$ is a $K_{\omega}$-split unipotent subgroup then a map $\phi:\mathcal{W}\rightarrow E$ is called {\it strongly} $\mathcal{W}$-{\it quasiregular} if there exist
\begin{itemize}
\item[(a)] a sequence $g_n\in E$ such that $g_n\rightarrow e.$
\item[(b)]  a sequence $\{\alpha_n:\mathcal{W}\rightarrow\mathcal{W}\}$ of $K_{\omega}$-regular maps of bounded degree.
\item[(c)]  a sequence $\{\beta_n:\mathcal{W}\rightarrow\mathcal{W}\}$ of $K_{\omega}$-rational maps of bounded degree.
\item[(d)] a Zariski open nonempty subset $\mathcal{X}\subset\mathcal{W}$ 
\end{itemize}
such that $\phi(u)=\lim_{n\rightarrow\infty}\alpha_n(u)g_n\beta_n(u)$ and the convergence is uniform on the compact subsets of $\mathcal{X}$
\end{itemize}
\end{definition}


\noindent
We note that if $\phi$ is strongly $\mathcal{W}$-quasiregular then it indeed is $\mathcal{W}$-quasiregular.
To see this, let $\rho:E\rightarrow\mbox{GL}(W)$ be a $K_{\omega}$-rational representation and let $w\in W$ be a $\mathcal{W}$-fixed vector. For any $u\in\mathcal{X}$ we have $$\rho(\phi(u))w=\lim_{n\rightarrow\infty}\rho(\alpha_n(u)g_n)w.$$ 
Identify $\mathcal{W}$ with an affine space, as we may, thanks to the fact $\mathcal{W}$ is split. The sequence $\{\psi_n:\mathcal{W}\rightarrow W,\hspace{2mm}u\mapsto\rho(\alpha_n(u)g_n)w\}$ is a sequence of polynomial maps of bounded degree and also the family is uniformly bounded on compact sets of $\mathcal{X}$ so it converges to a polynomial map with coefficients in $K_{\omega}$. This says $\phi$ is $\mathcal{W}$-quasiregular.

\vspace{1mm}
Let us go back to the setting of Theorem~\ref{joining}, in particular we have $H=G\times G.$ We want to construct quasi-regular maps for certain unipotent groups. Let us fix some notations here, for $t\in G_{\nu}$ a diagonal element of class $\mathcal{A},$ let $\u=\Delta(W^+(t)).$ The element $s=(t,t)\in H$ is of class $\mathcal{A}$ and we have $\u\subset W^+(t)\times W^+(t)=W^+(s),$ furthermore $L=W^-(s)Z(s)(W^+(t)\times\{e\})$ is a rational cross-section for $\u$ which is invariant under conjugation by $s$.

\vspace{1mm}
We fix relatively compact neighbourhoods $\mathfrak{B}^+$ and $\mathfrak{B}^-$ of $e$ in $W^+(s)$ and $W^-(s)$ respectively with the property that $\mathfrak{B}^+\subset s\mathfrak{B}^+s^{-1}$ and $\mathfrak{B}^-\subset s^{-1}\mathfrak{B}^-s.$ We define a filtration in $W^+(s)$ and $W^-(s)$ this is done by setting 
$\mathfrak{B}_n^+= s^n\mathfrak{B}^+s^{-n}$ and $\mathfrak{B}_n^-= s^{-n}\mathfrak{B}^-s^{n}$ 
respectively. For any integer $n$ we set $\u_n=\mathfrak{B}_n^+\cap\u.$ Define $\ell^{\pm}:W^{\pm}(s)\rightarrow\bbz\cup\{-\infty\}$, by
\begin{itemize}
\item[(i)] $\ell^+(x)=k$ iff $x\in\mathfrak{B}_k^+\setminus\mathfrak{B}_{k-1}^+$ and $\ell^+(e)=-\infty,$
\item[(ii)] $\ell^-(x)=k$ iff $x\in\mathfrak{B}_k^-\setminus\mathfrak{B}_{k-1}^-$ and $\ell^-(e)=-\infty.$
\end{itemize}

\noindent
As the definition suggests these functions measure the ``size" of elements in $W^{\pm}(s)$ with respect to the action of $s.$
 
Let $\{g_n\}$ be a sequence in $L\hspace{.5mm}\u\setminus N_H(\u)$ with $g_n\rightarrow e.$ Now as $L$ is a rational cross-section for $\u$ in $H$ we get rational morphisms $\tilde{\phi}_n:\u\rightarrow L$ and $\omega_n:\u\rightarrow\u$ such that $ug_n=\tilde{\phi}_n(u)\omega_n(u)$ holds for all $u$ in a Zariski open dense subset of $\u.$ 

\noindent
We now want to linearize the $\u$-action, this is done with the aid of a theorem of Chevalley. Let $\rho:H\rightarrow\mbox{GL}(V)$ be a $K_S$-representation such that \begin{equation}\label{e;chevalley1}\u=\{x\in H|\h\rho(x)v=v\}\end{equation}
for some $v\in V.$ According to this description we have 
\begin{equation}\label{e;chevalley2}\rho(N_H(\u))v=\{x\in Hv|\h\rho(\u) x=x\}.\end{equation}

\vspace{1mm}
\noindent
Let $\mathcal{B}(v)\subset V$ be a bounded neighborhood of $v$ such that 
$$\rho(H)v\cap\mathcal{B}(v)=\overline{\rho(H)v}\cap\mathcal{B}(v).$$
As $g_n\notin N_H(\u)$, there is a sequence of integers $\{b(n)\}$ with $b(n)\rightarrow\infty$ and $\rho(\u_{b(n)+1}g_n)v\not\subset\mathcal{B}(v)$ and $\rho(\u_{k}g_n)v\subset\mathcal{B}(v)$ for all $k\leq b(n).$ Define $K_{\nu}$-regular isomorphisms $\alpha_n:\u\rightarrow\u$ as follows: for every $u\in\u$ 
$$\lambda_n(u)=
s^{n}us^{-n}\hspace{3mm}\mbox{and set}\hspace{3mm}\alpha_n=\la_{b(n)}$$

The $K_{\nu}$-rational maps $\phi_n$'s are then defined by $\phi_n=\tilde{\phi}_n\circ\alpha_n:\u\rightarrow L.$ Using these we define
$$\phi'_n=\rho_L\circ\phi_n:\u\rightarrow V$$
where $\rho_L$ is the restriction to $L$ of the orbit map $h\mapsto \rho(h)v.$ Notice that by construction of $b(n)$ we have $\phi'_n(\mathfrak{B}_0)\subset\mathcal{B}(v)$ but $\phi'_n(\mathfrak{B}_1)\not\subset\mathcal{B}(v)$. 

As $\phi'_n(u)=\rho(\alpha_n(u)g_n)v$ we see that $\phi'_n:\u\rightarrow V$ is a $K_{\nu}$-regular map. If we use the fact that $\u$ is split we may identify $\u$ with the affine space of the same dimension via a polynomial map. This says we can interpret $\{\phi'_n\}$ as a set of $K_{\nu}$-polynomial maps of bounded degree. Using the definition of $\phi'_n$ we have $\{\phi'_n\}$ is uniformly bounded family of polynomials of bounded degree, thus passing to a subsequence, which we will still write as $\phi'_n,$ we may assume there is a $K_{\nu}$-regular map $\phi':\u\rightarrow V$ such that 
\begin{equation}\label{conv-eq}
 \phi'(u)=\lim_{n\rightarrow\infty}\phi'_n(u)\hspace{3mm}\mbox{for every}\hspace{3mm}u\in\u.
 \end{equation} 
Note that $\phi'(e)=v$ as $g_n\rightarrow e$ and that $\phi'$ is non-constant since $\phi'(\overline{\mathfrak{B}_1})\not\subset\mathcal{B}(v)^\circ$.

As $L$ is a rational cross-section for $H/\u$ we have that $L$ gets mapped onto a Zariski open dense subset $\mathcal{M}$ of the Zarsiki closure of $\rho(H)v$ and that $v\in\mathcal{M}.$ So we can define a $K_{\nu}$-rational map $\phi:\u\rightarrow L$ by 
$$\phi=\rho_L^{-1}\circ\phi'$$
The construction above gives $\phi(e)=e$ and that $\phi$ is non-constant.

We now show that the map $\phi$ constructed above is strongly $\u$-quasiregular.
Note that by the above construction we have for $u\in\phi'^{-1}(\mathcal{M})$ 
$$\phi(u)=\lim_{n\rightarrow\infty}\phi_n(u)$$
and the convergence above is uniform on the compact subsets of $\phi'^{-1}(\mathcal{M})$ (as \eqref{conv-eq} is uniform
on compact subsets and $\rho_L^{-1}$ is continuous on compact subsets of $\mathcal{M}$). We have 
$$\phi_n(u)=\alpha_n(u)g_n\beta_n(u)\hspace{3mm}\mbox{where}\hspace{3mm}\beta_n(u)=\omega_n(\alpha_n(u))^{-1}$$
The above says for $u\in\phi'^{-1}(\mathcal{M})$ we can write
\begin{equation}\label{e;phi-conv} \phi(u)=\lim_{n\rightarrow\infty}\alpha_n(u)g_n\beta_n(u),\end{equation} 
as we wished to show.

\subsection{Properties of $\phi$}
We now recall some important properties of the map $\phi$ constructed above. The proofs are more or less similar to the proofs in~\cite[section 6]{MT}. We will remark on how one translates the proofs from characteristic zero setting in~\cite{MT} to our setting here. In most cases the proofs simplify a bit in our setting thanks to the fact that our $\u$ is quite special.

Let us start with the following important property of the map $\phi.$ These will be used in various part in the proof of Theorem~\ref{joining}.

\begin{prop}\label{normalizer}(cf. ~\cite[Proposition 6.1]{MT})

$\phi$ maps $\u$ into $\nhu$, furthermore there is no compact subset $C$ of $H$ such that $\mbox{\rm{Im}}(\phi)\subset C\u.$
\end{prop}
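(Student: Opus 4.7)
I will split the statement into two parts, and handle each with a short argument relying on (\ref{e;chevalley1}), (\ref{e;chevalley2}), and the explicit form of the limit (\ref{e;phi-conv}).

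For the first assertion, the plan is to show $\rho(\phi(u))v$ is $\u$-fixed for every $u\in\u$, so that (\ref{e;chevalley2}) forces $\phi(u)\in N_H(\u)$. Fix $w\in\u$ and $u\in\u$. Since $\alpha_n$ is the group homomorphism $x\mapsto s^{b(n)}xs^{-b(n)}$ of $\u$, I can write
\[
w\,\alpha_n(u)\,g_n \;=\; \alpha_n\bigl(\alpha_n^{-1}(w)\cdot u\bigr)\,g_n,
\]
so setting $w_n:=\alpha_n^{-1}(w)=s^{-b(n)}ws^{b(n)}$ one obtains $\rho(w)\phi'_n(u)=\phi'_n(w_n u)$. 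Because $s$ contracts $W^+(s)\supset\u$ under conjugation by $s^{-1}$ and $b(n)\to\infty$, we have $w_n\to e$. The sequence $\phi'_n$ consists of polynomial maps of bounded degree, uniformly bounded on compacts, and converges to $\phi'$ pointwise; hence the convergence is uniform on compact subsets of $\u$. Since $w_n u\to u$, passing to the limit yields $\rho(w)\phi'(u)=\phi'(u)$. As $w\in\u$ was arbitrary, $\phi'(u)=\rho(\phi(u))v$ is $\u$-fixed, and (\ref{e;chevalley2}) then gives $\phi(u)\in N_H(\u)$.

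For the second assertion, the plan is a contradiction via a polynomial-growth argument. Suppose there existed a compact $C\subset H$ with $\mathrm{Im}(\phi)\subset C\u$. Using $\rho(\u)v=v$ from (\ref{e;chevalley1}) I would then have
\[
\phi'(\u)=\rho(\phi(\u))v\subset\rho(C\u)v=\rho(C)v,
\]
which is compact in $V$. On the other hand, identifying $\u$ with affine space (as $\u$ is $K_\nu$-split), $\phi'$ is a $K_\nu$-polynomial map of the affine space to $V$, and by construction $\phi'(\overline{\mathfrak{B}_1})\not\subset\mathcal{B}(v)^\circ\supset\{\phi'(e)\}=\{v\}$, so $\phi'$ is non-constant. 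A non-constant polynomial map from affine space over a local field into $V$ has unbounded image (restrict to a line on which the map is non-constant and reduce to the well-known one-variable statement), contradicting the inclusion $\phi'(\u)\subset\rho(C)v$.

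\textbf{Main obstacle.} The only step requiring care is the interchange of limits in the first part: to conclude $\phi'_n(w_nu)\to\phi'(u)$ when the argument $w_nu$ itself depends on $n$, I need uniform convergence of $\phi'_n\to\phi'$ on a fixed compact neighbourhood of $u$. This is where the fact that the $\phi'_n$ are polynomials of \emph{bounded} degree which are uniformly bounded on compact subsets of $\u$ is essential, as it upgrades pointwise convergence to locally uniform convergence. Apart from that, both parts are direct consequences of the Chevalley setup and elementary properties of polynomial maps over $K_\nu$.
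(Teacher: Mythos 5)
Your proof is correct and follows essentially the same route as the paper: the first part rests on the identity $\rho(u_0\alpha_n(u)g_n)v=\rho(\alpha_n(\alpha_n^{-1}(u_0)u)g_n)v$ together with $\alpha_n^{-1}(u_0)\to e$ and locally uniform convergence of the bounded-degree polynomials $\phi'_n$, and the second part is exactly the paper's observation that $\phi'=\rho_L\circ\phi$ is a non-constant, hence unbounded, polynomial map while $\rho(C\u)v=\rho(C)v$ is bounded. The only cosmetic difference is that the paper restricts to $u\in\phi'^{-1}(\mathcal{M})$ and then invokes Zariski density, whereas you argue directly on all of $\u$; both are fine.
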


\begin{proof}
We will use the notation as above. Recall from~\eqref{e;chevalley2} that $N_H(\u)=\{h\in H:\h \rho(\u)\rho(h)v=\rho(h)v\}$. Thus we need to show that for any $u_0\in \u$\vspace{1mm} we have $\rho(u_0)\rho(\phi(u))v=\rho(\phi(u))v$ for all $u\in\u$. 

Let $u\in\phi'^{-1}(\mathcal{M}).$ We saw in~\eqref{e;phi-conv} that $\phi(u)=\lim_{n\rightarrow\infty}\alpha_n(u)g_n\beta_n(u).$ We have
$$\rho(u_0\alpha_n(u)g_n)v=\rho(\alpha_n(\alpha_{n}^{-1}(u_0)u)g_n)v$$
Note that $\alpha_n^{-1}(u_0)\rightarrow e$ as $n\rightarrow\infty$ thus we have $\phi(u)\in N_H(\u)$ for all $u\in\phi'^{-1}(\mathcal{M}).$ The result now follows since $\phi'^{-1}(\mathcal{M})$ is a Zariski dense subset of $\u.$ 

To see the second assertion note that $\phi=\rho_L^{-1}\circ\phi'$ and $\phi'$ is a non-constant (hence unbounded) polynomial map.
\end{proof}

The following is a technical condition on the sequence $\{g_n\}$ which is needed in the proof of Basic Lemma below, it is Definition 6.6 in~\cite{MT}.
 
\begin{definition}\label{cstar}
A sequence $\{g_n\}$ is said to satisfy the condition $(*)$ with respect to $s$ if there exists a compact subset $C$ of $H$ such that for all $n\in\bbn$ we have $s^{-b(n)}g_ns^{b(n)}\in C.$ 
\end{definition}

Let us fix some further notations, denote $\mathcal{V}=W^+(t)\times \{e\}\subset W^+(s)$ and the corresponding counter parts $\u^-=\Delta(W^-(t))$ and $\v^-=W^-(t)\times\{e\},$ which are subgroups of $W^-(s).$ Note that $$\mathcal{D}=\u^-\v^-Z(s)\v\u=W^-(s)Z(s)W^+(s)$$ is a Zariski open dense subset of $H$ and for any $g\in\mathcal{D}$ we have a unique decomposition 
\begin{equation}\label{decomposition} g=w^-(g)z(g)w^+(g)=u^-(g)v^-(g)z(g)v(g)u(g)\end{equation}
where $u^-(g)\in\u^-,\h v^-(g)\in\v^-,\h z(g)\in Z(g),\h u(g)\in\u,\h v(g)\in\v,\h w^-(g)=u^-(g)v^-(g)$ and $w^+(g)=v(g)u(g).$

Note that for every $w^{\pm}\in W^{\pm}(s)$ we have 
\begin{equation}\label{e;ell-func}\ell^{\pm}(s^kw^{\pm}s^{-k})=\ell^{\pm}(w^{\pm}(g))\pm k\end{equation}  

Recall that $\u$ is normalized by $s$ by our definitions. 


\begin{prop}\label{star} (cf. ~\cite[Proposition 6.7]{MT})
Let $s$ and $\u$ be as above then the following hold
\begin{itemize}
\item[(i)] any sequence $\{g_n\}$ satisfies condition $(*)$ with respect to $s.$
\item[(ii)] if the sequence $\ell^-(v^-(g_n))-\ell^-(u^-(g_n))$ is bounded from below, then $\phi(\u)\subset W^+(s).$
\end{itemize}
\end{prop}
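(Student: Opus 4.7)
The plan is to leverage the identity
\[
\alpha_n(u)g_n \;=\; s^{b(n)}(u\hh\tilde g_n)s^{-b(n)},\qquad \tilde g_n := s^{-b(n)}g_n s^{b(n)},
\]
which converts the defining property of $b(n)$ into information about the conjugate $\tilde g_n$; part (i) is precisely the statement that $\tilde g_n$ is bounded, and part (ii) is extracted from an analysis of the Bruhat factors of $\tilde g_n$.

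For (i), the maps $\psi_n(u):=\rho(\alpha_n(u)g_n)v$ form, through the identification of the $K_\nu$-split unipotent group $\u$ with an affine space, a family of polynomial maps of degree bounded independently of $n$, and by construction of $b(n)$ they take values in $\mathcal{B}(v)$ for every $u\in\u_0$. Standard Cauchy-type estimates for polynomials of bounded degree force their monomial coefficients to lie in a fixed bounded subset of $V$. Rewriting $\psi_n(u)=\rho(s^{b(n)})\rho(u)\rho(\tilde g_n)\rho(s^{-b(n)})v$ and decomposing $V$ into weight spaces for $\rho(s)$ (available because $s$ is of class $\mathcal{A}$, so $\rho(s)$ is semisimple with eigenvalues that are explicit powers of uniformizers), one matches the $\rho(s^{\pm b(n)})$ factors weight by weight to read off bounds on the matrix entries of $\rho(\tilde g_n)v$. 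Enlarging $\rho$ if necessary so that $\tilde g_n$ is determined up to $\u$ by these entries (e.g.\ by summing with further Chevalley representations that distinguish cosets of $\u$ in $H$), one concludes $\tilde g_n\in C$ for a fixed compact $C\subset H$.

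For (ii), I would first pass to a subsequence so that $\tilde g_n$ converges. Because $s$ normalizes each of $\u^-,\v^-,Z(s),\v,\u$, the Bruhat factors of $\eqref{decomposition}$ commute with $s^{-b(n)}$-conjugation, so $u^-(\tilde g_n)=s^{-b(n)}u^-(g_n)s^{b(n)}$ and analogously for the other four factors; by (i) each factor is bounded and, after a further subsequence, convergent. Substituting the Bruhat form of $\tilde g_n$ into $\phi_n(u)=s^{b(n)}(u\hh\tilde g_n)s^{-b(n)}\beta_n(u)$ and rewriting $u\hh\tilde g_n$ again in Bruhat form, the $W^-(s)$-pieces contract under $s^{b(n)}$-conjugation and vanish in the limit, the $W^+(s)$-pieces expand but are exactly neutralized by the choice of $\beta_n(u)\in\u\subset W^+(s)$ that places $\phi_n(u)$ in the cross-section $L$, and the $Z(s)$-piece stays put. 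The hypothesis that $\ell^-(v^-(g_n))-\ell^-(u^-(g_n))$ is bounded below enters precisely at the step of rewriting the product $u\cdot u^-(\tilde g_n)\in\Delta(G)$ in Bruhat form inside $\Delta(G)$: the off-diagonal $\v^-$-contribution that could otherwise survive in $\phi(u)$ is controlled by this ratio, and being bounded below makes it vanish in the limit. One concludes that the $W^-(s)$-part of $\phi(u)$ is trivial; since $\phi(u)\in L=W^-(s)Z(s)\v$, the $Z(s)$-part is then trivial too, so $\phi(u)\in\v\subset W^+(s)$.

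The main obstacle will be the detailed weight-space bookkeeping in (i), where one has to check that boundedness of the polynomial coefficients of $\psi_n$ really does pin down every coordinate of $\tilde g_n$ and not only those visible through the single $\u$-fixed vector $v$; and in (ii), making precise how the hypothesis on $\ell^-(v^-(g_n))-\ell^-(u^-(g_n))$ rules out a residual $W^-(s)$-contribution coming from the interplay between $\u\subset\Delta(G)$ and the off-diagonal $\v^-$-factor. Both tasks have characteristic-zero analogues in \cite[Proposition 6.7]{MT}; the translation here proceeds using uniformizer valuations in place of logarithms of real absolute values, and this goes through because $s$ is of class $\mathcal{A}$ and all the subgroups in play are $K_S$-split.
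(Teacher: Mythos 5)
Your treatment of part (i) has a genuine gap, and it is not the ``bookkeeping'' you flag as the main obstacle. Condition $(*)$ is \emph{not} an automatic consequence of the construction of $b(n)$: in the general setting of \cite{MT} it can fail, which is why it appears there as a hypothesis to be verified rather than a formal consequence of the renormalization. The reason it holds here for \emph{every} sequence is the special structure of $\u=\Delta(W^+(t))$, namely that $N_H(\u)\subset Z_H(s)W^+(s)$, so $N_H(\u)\cap W^-(s)=\{e\}$. Your argument never invokes any such property of $\u$, so it cannot be complete. Concretely, two steps break down. First, the definition of $b(n)$ only controls the polynomials $u\mapsto\rho(ug_n)v$, i.e.\ only the coset $g_n\u$ as seen through the single orbit $\rho(H)v$; matching $\rho(s^{\pm b(n)})$ weight by weight does not bound $\rho(s^{-b(n)}g_ns^{b(n)})v$, because the $W^-(s)$-part of $g_n$ can be expanded enormously by $s^{-b(n)}(\cdot)s^{b(n)}$ while leaving the renormalized polynomials $\psi_n$ bounded --- this happens exactly when the renormalized limit $h\in W^-(s)$ of that part satisfies $\u h\subset\overline{W^-(s)\u}$ (Zariski closure). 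Second, even a bound on $\rho(\tilde g_n)v$ would not bound $\tilde g_n\u$ in $H/\u$, since the orbit $\rho(H)v$ need not be closed; the set $\mathcal{B}(v)$ is introduced in the construction precisely to patch this locally, and your sequence is not known to land in it.

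The paper's proof runs the other way: it renormalizes $g_n$ by $s^{k(n)}$ with $k(n)=\ell^-(w^-(g_n))$ to produce a nontrivial limit $h\in W^-(s)$, quotes the argument of \cite[Prop.~6.7]{MT} to show that failure of $(*)$ forces $\u\hh h\subset\overline{W^-(s)\u}$, and then derives a contradiction: the quasi-normalizer $\mathbb{E}=\{x: \u x\subset\overline{W^-(s)\u}\}$ is a unipotent group strictly containing $\u$, so $N_H(\u)\cap W^-(s)\neq\{e\}$, contradicting $N_H(\u)\subset Z_H(s)W^+(s)$. Any correct proof of (i) must use this (or an equivalent) structural input. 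For part (ii) the paper simply defers to \cite[Prop.~6.7]{MT}; your outline is broadly consistent with that argument, but it is moot until (i) is repaired, since (ii) presupposes that the conjugates $s^{-b(n)}g_ns^{b(n)}$ converge along a subsequence.
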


\begin{proof}
Denote $\ell^-(w^-(g_n))=k(n).$ Let $h_n=s^{k(n)}g_ns^{-k(n)}.$ It follows from~\eqref{e;ell-func} that $h_n\in \mathfrak{B}^-_0\setminus\mathfrak{B}^-_{-1}.$ Since $g_n\rightarrow e$ we have $\lim_n w^+(h_n)=\lim_n z(h_n)=e.$ Hence passing to a subsequence we may and will assume $\lim_nh_n=h\in W^-(s)$ and $h\neq e.$ Now the same argument as in the proof of~\cite[Proposition 6.7]{MT} with $U=U_0=\u$ gives: if $\{g_n\}$ does not have property $(*)$ then 
\begin{equation}\label{e;uhw^-u}\u\hh h\subset\overline{W^-(s)\u}\end{equation} 
where the closure is the Zariski closure. Let $h=(h_1,h_2).$ Since $h\neq e$ either $h_1\neq e$ or $h_2\neq e,$ assume $h_1\neq e.$ If we now project~\eqref{e;uhw^-u} to the first component and use the fact that $\u=\Delta(W_G^+(t)),$ we have $W_G^+(t)h_1\subset\overline{W_G^-(t)W_G^+(t)}.$ However since $h_1\in W_G^-(t)$ it follows from general facts about semisimple groups that $W_G^+(t)h_1$ must have components in $Z_G(t)$ which contradicts~\eqref{e;uhw^-u}. Let us give a proof of this using the proof of part (b) of~\cite[Proposition 6.7]{MT} without referring to these algebraic group facts. Let $\mathbb{E}=\{x\in H:\h \u x\subset \overline{W^-(t)\u}\}.$ Note that $\mathbb{E}=\{x\in H:\h \overline{W^-(t)\u} x\subset \overline{W^-(t)\u}\}$ hence this is a $K_S$-closed algebraic group, see~\cite{B1}. It is shown in loc. cit. that $\mathbb{E}$ is a unipotent algebraic subgroup. Let $E=\mathbb{E}(K_S)$ then the Zariski closure of $E$ which we denote by $\mathbb{E}'$ is a unipotent algebraic group defined over $K_S$ and $E=\mathbb{E}'(K_S),$ see~\cite[AG 12, 14]{B1}. Now since $h\in W^-(s)\setminus\{e\}$ and $h\in E$ we have $E\neq\u.$ Since $E$ is unipotent this implies that $N_E(\u)\neq\u.$ Note that $E\subset\overline{W^-(s)\u}$ and that $W^-(s)\u$ is Zarsiki open subset of its closure containing the identity. Thus $N_H(\u)\cap W^-(s)\neq\{e\}.$ This contradicts the fact that $N_H(\u)\subset Z_H(s)W^+(s)$ and finishes the proof of (i). 

The proof of part (ii) is the same as the proof given in~\cite[Proposition 6.7]{MT}, see page 368 in loc. cit.   
\end{proof}

As before we have $s=(t,t)$ and $\u=\Delta(W^+(t))$. Now let $\mu$ be a $\u$-invariant probability measure on $X$ and let $\mu=\int_{Y}\mu_{y}d\sigma(y)$ be an ergodic decomposition for $\mu.$ For $\mu$-a.e. $x\in X$ we set $y(x)$ to be the corresponding point from $(Y,\sigma).$ We also fix once and for all, the left invariant Haar measure  $\theta$ on $\u$ note that as $\u$ is unipotent $\theta$ is also the right invariant Haar measure on $\u.$

\begin{definition}\label{aver}
A sequence of measurable non-null sets $A_n\subset \u$ is called an {\it averaging net} for the action of $\u$ on $(X,\mu)$ if the following analogue of the Birkhoff pointwise ergodic theorem holds. For any continuous compactly supported function $f$ on $X$ and for almost all $x\in X$ one has
$$\lim_{n\rightarrow\infty}\frac{1}{\mu(A_n)}\int_{A_n}f(ux)d\theta(u)=\int_Xf(h)d\mu_{y(x)}(h).$$ 
\end{definition}

\begin{lem}\label{averlem}(cf.\ \cite[section 7.2]{MT})
Let $A\subset\u$ be relatively compact and non-null. Let $A_n=\la_n(A).$ Then  $\{A_n\}$ is an averaging net for the $\u$ action on $(X,\mu).$
\end{lem}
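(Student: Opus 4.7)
The plan is to reduce Lemma~\ref{averlem} to a pointwise ergodic theorem for the action of $\mathcal{U}$ on $(X,\mu)$ along a Følner sequence, following \cite[Section~7.2]{MT}. By the ergodic decomposition $\mu=\int \mu_y\,d\sigma(y)$, one may restrict attention to an ergodic component $\mu_{y(x)}$, so the desired limit becomes the usual Birkhoff-type average. Thus it suffices to prove that, for a $\mathcal{U}$-ergodic probability measure, the sets $A_n=\lambda_n(A)=s^n A s^{-n}$ satisfy the mean and pointwise ergodic theorems.

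The heart of the proof is to verify that $\{A_n\}$ is a tempered Følner sequence in the unipotent group $\mathcal{U}$. Since $\mathcal{U}=\Delta(W^+(t))\subset W^+(s)$, conjugation by $s$ acts on $\mathcal{U}$ as an expanding $K_\nu$-rational automorphism: writing $J>1$ for the absolute value of its Jacobian with respect to Haar measure $\theta$, one has $\theta(A_n)=J^n\theta(A)$. For any compact $K\subset\mathcal{U}$, the identity
$$KA_n \;=\; s^n\,(s^{-n}Ks^n)\cdot A\,s^{-n}$$
combined with $s^{-n}Ks^n\to\{e\}$ uniformly on $K$ (which holds because $K\subset W^+(s)$) yields $(s^{-n}Ks^n)\cdot A \subset A_{\varepsilon_n}$, an $\varepsilon_n$-thickening of the relatively compact set $A$, with $\varepsilon_n\to 0$. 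Hence $\theta(KA_n)/\theta(A_n)\to 1$, proving the left Følner property; since $\mathcal{U}$ is unimodular this is the Følner condition. Choosing $A$ compatible with the filtration (e.g.\ $A=\mathfrak{B}_0^+\cap\mathcal{U}$, so that $A_n=\mathfrak{B}_n^+\cap\mathcal{U}$ is nested) gives $\theta(A_{n+1})/\theta(A_n)=J$, which is bounded, and hence temperedness of $\{A_n\}$ in the sense of Lindenstrauss; the general case follows by a routine approximation of $A$ by such sets. The pointwise ergodic theorem for actions of the amenable group $\mathcal{U}$ along tempered Følner sequences now gives, for $\mu_{y(x)}$-a.e.\ $x$,
$$\lim_{n\to\infty}\frac{1}{\theta(A_n)}\int_{A_n} f(ux)\,d\theta(u)\;=\;\int_X f\,d\mu_{y(x)},$$
which is the desired conclusion.

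The main obstacle is translating the Margulis--Tomanov argument from characteristic zero, where $\mathcal{U}$ is a real unipotent Lie group, to the present setting, where $\mathcal{U}$ is a totally disconnected nilpotent $K_\nu$-group. The saving grace is that the only ingredients actually used are the existence and structure of Haar measure on $\mathcal{U}$, the expanding conjugation $\lambda_n$ with a clean Jacobian, and an amenable-group pointwise ergodic theorem. All of these are available because $\mathcal{U}$ is $K_\nu$-split: it identifies as a $K_\nu$-variety with an affine space of some dimension $d$ on which $\lambda_n$ acts diagonally (with respect to the descending filtration induced by $\{\mathfrak{B}_n^+\}$) with strictly positive exponents. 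This uniformity lets the argument of loc.\ cit.\ go through with only cosmetic changes.
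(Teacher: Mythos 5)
You take a genuinely different route from the paper, which gives no argument of its own: it defers to \cite[Section 7.2]{MT} and only remarks that for $A$ a compact subgroup with $A\subset sAs^{-1}$ the statement is the decreasing martingale theorem (the $A_n$-averages are then conditional expectations onto a decreasing chain of $\sigma$-algebras whose intersection is the $\u$-invariant one). Unfortunately your route has a genuine gap at its central step: for a general relatively compact non-null \emph{measurable} $A$, the sequence $\{\la_n(A)\}$ is \emph{not} a Folner sequence, so Lindenstrauss's theorem does not apply to it. Your thickening argument only gives $\theta(KA_n)/\theta(A_n)\rightarrow\theta(\overline{A})/\theta(A)$, because the sets $(s^{-n}Ks^n)A$ decrease to the closure $\overline{A}$ rather than to $A$, and this limit exceeds $1$ whenever $\theta(\overline{A})>\theta(A)$. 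Concretely, in $\u\cong K_\nu$ take $A$ to be a dense open subset of the unit ball $\mathcal{O}$ of measure $\tfrac12$ (a union of small balls around a countable dense subset); then for $K=\mathcal{O}$ one has $K+\la_n(A)\supseteq\la_n(\mathcal{O})$, hence $\theta\bigl((K+A_n)\setminus A_n\bigr)\geq\theta(A_n)$ for every $n$. A smaller point: boundedness of $\theta(A_{n+1})/\theta(A_n)$ does not imply temperedness. Temperedness does in fact hold for every admissible $A$, but for a different reason, namely $\bigcup_{k<n}A_k^{-1}A_n\subseteq s^n(\tilde{C}A)s^{-n}$ with $\tilde{C}=\bigcup_{j\geq0}s^{-j}A^{-1}s^{j}$ relatively compact.

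The step you dismiss as ``routine approximation'' is therefore not cosmetic; it is where the content of \cite[Section 7.2]{MT} lies. The transfer from nice sets to general $A$ should not go through the Folner property at all: using the exact scaling $\theta(\la_n(E))=J^n\theta(E)$ one checks that the normalized averages of a bounded $f$ over $\la_n(A)$ and over $\la_n(A')$ differ by at most a constant times $\|f\|_\infty\,\theta(A\triangle A')/\theta(A)$, uniformly in $n$ and in the base point, so it suffices to treat $A'$ a finite disjoint union of cosets $u_iB$ of a small compact open subgroup $B\subset sBs^{-1}$. Even for such $A'$ one must control averages over the translated sets $\la_n(u_i)B_n$, whose base points $\la_n(u_i)x$ move with $n$; this is handled in loc.\ cit.\ by combining the martingale convergence for the subgroup case with a density point argument. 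Either carry out this reduction explicitly, or simply quote \cite[Section 7.2]{MT} as the paper does.
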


We note that if we choose $A$ to be a compact subgroup with $A\subset sAs^{-1},$ then this lemma follows from the decreasing Martingale theorem.
  
\begin{definition}\label{unifconv}
$\Omega\subset X$ is said to be {\it a set of uniform convergence relative to $\{A_n\}$} if for every $\vare>0$ and every continuous compactly supported function $f$ on $X$ one can find a positive number $N(\vare,f)$ such that for every $x\in\Omega$ and $n>N(\vare,f)$ one has
$$\left|\frac{1}{\mu(A_n)}\int_{A_n}f(ux)d\theta(u)-\int_Xf(h)d\mu_{y(x)}(h)\right|<\vare.$$ 
\end{definition}

It is an easy consequence of Egoroff's theorem and second countablity of the spaces under consideration and is proved in~\cite[section 7.3]{MT} that for any $\vare>0$ one can find a measurable set $\Omega$ with $\mu(\Omega)>1-\vare$ which is  a set of uniform convergence relative to $\{A_n=\la_n(A)\}$ for every relatively compact non-null subset $A$ of $\u.$

The following is the main point of the construction of the quasi-regular maps and provides us with the extra invariance that we were after in this section.

\begin{lem*}\label{basic}(cf.\ \cite[Basic Lemma 7.5]{MT})\\
Let $\Omega$ be a set of uniform convergence relative to averaging nets $\{A_n=\la_n(A)\}$ corresponding to arbitrary relatively compact non-null subset $A\subset\u.$ Let $\{x_n\}$ be a sequence in $\Omega$ with $x_n\rightarrow x\in\Omega.$ Let $\{g_n\}\subset H\setminus\nhu$ be a sequence which satisfies condition $(*)$ with respect to $s.$ Assume further that $g_nx_n\in\Omega$ for every $n.$ Now if $\phi$ is the $\u$-quasiregular map corresponding to $\{g_n\}$ constructed above then the ergodic component $\mu_{y(x)}$ is invariant under $\mbox{\rm{Im}}(\phi).$   
\end{lem*}

\begin{proof}
The same proof as in~\cite[Basic Lemma]{MT} with $U=U_0=\u$ and $p=id$ works here.  
\end{proof}

\begin{remark}\label{agoestoa'}
In this section we carried out the construction for elements from class $\mathcal{A},$ however one sees from the construction that this could be done following exact the same lines when $s$ is an element from class $\mathcal{A}'.$ We will use this without further notice.  
\end{remark}


\section{Joining classification for the action of $U_{\nu}$}\label{secjoining}
Recall that a joining for the action of $U=U_{\nu}$ on $X_i$ is an $\n$-invariant probability measure on $X$ which satisfies the property 
$$\pi_{i*}(\mu)=m_i\hspace{3mm}\mbox{for}\hspace{3mm}i=1,2$$  

In this section we use the construction and the properties from the previous section and complete the proof of Theorem~\ref{joining}. Let the notations and conventions be as before in particular $U=U_{\nu}$ is a maximal unipotent subgroup of $G_{\nu}.$ We let $\n=\Delta(U).$ As in the statement of Theorem~\ref{joining} let $\mu$ be an ergodic joining for the action of $U$ on $X_i.$

\begin{remark}\label{ergodic-joining}
Observe that if $\mu$ is a joining for the action of $U$ on $X_i$'s then almost every ergodic component is a joining as well. To see this let $\mu=\int_{Y}\mu_y\h d\sigma(y)$ be an ergodic decomposition of $\mu.$ Now as $\pi_{i*}\mu=\int_Y\pi_{i*}\mu_yd\sigma(y)$ and since $\pi_i(\n)$ acts ergodically on $X_i$ we get for $\sigma$-a.e $\mu_y$ is a joining for $\n.$ 
\end{remark}

Note that our subgroup $\n$ here fits into the general framework of Section~\ref{secquasi}. That is there is a diagonal element $t\in G_{\nu}$ such that $U=W^+(t)$ and $\n=\Delta(U).$ Set ${s}=(t,t)$. 
We let $L=W^-(s)Z(s)(U\times\{e\})$ be a rational cross-section for $\n$ in $H,$ as before. Note that indeed $P_{\nu}=N_{G_{\nu}}(U)$ is the minimal parabolic subgroup in the introduction. Furthermore we remark that in this setting one has the following description 
\begin{equation}\label{nhn-eq}
 \nhn=\Delta(P_{\nu})(Z_{G}(U)\times Z_G(U)).
\end{equation}  
To see this  note that $N_G(U)=P$ and so an element $(g_1,g_2)\in \nhn$ must satisfy $g_1,g_2\in P$.
In fact, from $(g_1,g_1)^{-1}(g_1,g_2)=(e,g_1^{-1}g_2)\in\nhn$ we get that $g_1^{-1}g_2\in Z_G(U)$
which gives the claim.

To state the next lemma we need to fix some further notations. Let $\mathbb{T}$ be a maximal torus of $\mathbb{G}$ defined over $K_{\nu},$ which normalizes $\mathbb{U}.$ Let $\mathbb{B}$ be a Borel subgroup of $\mathbb{G}$ (defined over $\overline{K}$ the algebraic closure of $K$) containing $\mathbb{T}$ and so that $\mathbb{U}$ consists of positive roots. Throughout we fix an ordering on the roots induced by $\mathbb{B}.$ We may and will assume that this ordering is compatible with the natural ordering induced by action of $t$ which was used in the previous section. Let $\mathbb{S}$ be maximal $K_{\nu}$-split subtorus of $\mathbb{T}$ and let $S=\mathbb{S}(K_{\nu}).$ Let $\Phi$ be the root system corresponding to $\mathbb{T}$ above and let $_{\nu}\Phi$ be the relative root system corresponding to $\mathbb{S}.$ 
Let $\gamma\in {}_{\nu}\Phi$ be a dominant root with respect to $_{\nu}\Phi.$ Let $\bbu_{\gamma}$ be the unique connected unipotent subgroup defined and split over $K_{\nu},$ normalized by $Z_{\mathbb{G}}(\mathbb{S})$ corresponding to the relative root $\gamma.$ This is  $Z(\bbu_{(\gamma)})$ if $\gamma$ is a multiple relative root, in the notation of~\cite{B1}. Note that $\bbu_{\gamma}$ is $K_{\nu}$-isomorphic to an affine space. And we let $\bbu_{-\gamma}$ be the corresponding object with respect to $-\gamma.$ We chose $\gamma$ to be dominant root in $_{\nu}\Phi.$ Thus we may invoke results from~\cite[Sect.~3]{BT1} on properties of groups generated by roots corresponding to quasi-closed (terminology as in loc.\ cit.) subsets of $_{\nu}\Phi$. We obtain that the algebraic group $\bbg_{\gamma}=\l \bbu_{\gamma}, \bbu_{-\gamma}\r$ generated by $\bbu_{\gamma}$ and $\bbu_{-\gamma}$ is defined over $K_{\nu}$ and has $K_{\nu}$-rank equal one. We let $U_{\gamma}=\bbu_{\gamma}(K_{\nu}
 ),$ $U_{-\gamma}=\bbu_{-\gamma}(K_{\nu})$ and $G_{\gamma}=\bbg_{\gamma}(K_{\nu})$


\vspace{1mm}
Now fix a cross-section, $U_{-\gamma}'$, for $U_{-\gamma}$ in $W^-(t)$ defined over $K_{\nu}$ and invariant under conjugation by $S.$ Such cross-section exists, see for example in~\cite{BS}. If $g\in W^-(t)$ we write $g=g_{\gamma}g'$ where $g_{\gamma}\in U_{-\gamma}$ and $g'\in U_{-\gamma}'$ is in the fixed cross-section.

We equip $G_{\nu}$ with a right invariant metric, $d_r(\h,\h)$ (which near $e$ we define via the matrix norm $\operatorname{Mat}_\ell(K_\nu)\supset G_\nu$ by averaging over a ``good" compact open subgroup of $G_\nu$).  Denote $|g|=d_r(e,g).$ Now as $\gamma$ is the highest root and since $t$ was chosen to be regular i.e.\ $\alpha(t)\neq1$ for all $\alpha\in{} _{\nu}\Phi,$ we may find $a>1$ depending on $_{\nu}\Phi$ and $t$ with the following property
for any $\kappa_n\rightarrow0$: If $\{h_n\}$ is a bounded sequence in $G_{\nu}$ which satisfies 
$$\kappa_n^a<|w^-(h_n)_{\gamma}|<\kappa_n\h\h\mbox{and}\h\h|w^-(h_n)|<\kappa_n$$ 
then $\ell^-(w^-(h_{n})_{\gamma})-\ell^-(w^-(h_{n})')$  tends to $+\infty$ as $n$ tends to infinity. Recall that the function $\ell^-$ measures the expansion factor of the action of $s^{-1}$ on $W^-(s).$ So this assertion is the fact that $-\gamma$ is expanded the most! Let us fix $\kappa_n=\frac{1}n$.

\vspace{1mm}
\begin{lem}\label{sequ}
There exists some $0<\vare<1$ with the following property for any $\Omega$ which satisfies $\mu(\Omega)>1-\vare.$ There exists a sequence $\{g_n\}$ such that 
\begin{itemize}
\item[(i)] $d(g_n,e)<\kappa_n\rightarrow 0$ 
\item[(ii)] $g_n\Omega\cap\Omega\neq\emptyset$ 
\item[(iii)] If $g_n=(g_{1,n},g_{2,n})$ and $g_{2,n}=w^-(g_{2,n})z(g_{2,n})w^+(g_{2,n})$ and we write $w^-(g_{2,n})=w^-(g_{2,n})_{\gamma}w^-(g_{2,n})'$ as above, then for all large enough $n$ 
$$|g_{1,n}|<\kappa_n^{\h\frac{a-1}{2\dim\bbg}},\h\h\kappa_n^a<|w^-(g_{2,n})_{\gamma}|<\kappa_n\h\h\mbox{and}\h\h |w^-(g_{2,n})'|<\kappa_n$$
\end{itemize}
\end{lem}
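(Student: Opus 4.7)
\emph{Plan.} Fix $\vare>0$ (to be pinned down) and suppose $\mu(\Omega)>1-\vare$. The strategy is to combine Poincar\'e recurrence on $(X,\mu)$ with two successive pigeonholes, using the joining hypothesis $(\pi_1)_*\mu=m_{X_1}$ to decouple the sizes of the two components of the return element.

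\emph{Step 1 (bulk recurrence off the normalizer).} A Vitali covering of a small neighbourhood of $e$ in $H$ together with Poincar\'e recurrence for the probability measure $\mu$ yields, at each scale $\kappa_n$, a subset $\Omega_1\subset\Omega$ with $\mu(\Omega_1)\geq 1-2\vare$ such that every $x\in\Omega_1$ admits a return $y=gx\in\Omega$ with $|g|<\kappa_n$ and $g\notin\nhu$. The exclusion $g\notin\nhu$ follows because returns inside $\u$ are trivially measure-preserving (and can be quotiented out), while by \eqref{nhn-eq} the complement $\nhu\setminus\u$ is of strictly smaller dimension and its contribution is bounded by a standard count using $\u$-invariance of $\mu$.

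\emph{Step 2 (forcing $|g_{1,n}|$ small, via the joining).} Because $(\pi_1)_*\mu=m_{X_1}$ and $X_1$ is a finite-volume quotient, every ball $B_\eta\subset X_1$ of radius $\eta$ carries $\mu$-mass $\sim\eta^{\dim G_\nu}$. Set $\eta:=\kappa_n^{(a-1)/(2\dim\bbg)}$ and cover $X_1$ by balls of radius $\eta$. A Chebyshev estimate on $\mu$ restricted to $\pi_1^{-1}(B_\eta)$ combined with Step~1 arranges that a positive proportion of the pairs $(x,y)$ from Step~1 have $\pi_1(x),\pi_1(y)$ in a common $\eta$-ball of $X_1$, forcing $|g_{1,n}|<2\eta$. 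The joining hypothesis is indispensable here: without $(\pi_1)_*\mu$ being Haar there is no lower bound on the $\pi_1$-density and no way to shrink $|g_{1,n}|$ independently of $|g_{2,n}|$. Now further decompose $g_{2,n}$ as in \eqref{decomposition} and split $w^-(g_{2,n})=w^-(g_{2,n})_\gamma\cdot w^-(g_{2,n})'$ via the fixed $S$-invariant cross-section $U_{-\gamma}'$. Partition the allowed ball $\{|g_2|<\kappa_n\}$ into dyadic cells indexed by the sizes of $(|w^-(g_2)_\gamma|,|w^-(g_2)'|)$: since $\gamma$ is the dominant relative root of $_\nu\Phi$ (so $U_{-\gamma}$ has positive $K_\nu$-dimension) and $a>1$, the cell
\[
\{\kappa_n^a<|w^-(g_2)_\gamma|<\kappa_n,\ |w^-(g_2)'|<\kappa_n\}
\]
carries a definite fraction of the Haar mass of $\{|g_2|<\kappa_n\}$, because the removed annulus $|w^-(g_2)_\gamma|\leq\kappa_n^a$ has vanishing proportional volume. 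A final pigeonhole picks a $g_n$ satisfying~(iii); iterating over $n$ produces the required sequence, and the fact that we need only infinitely many admissible $n$ lets us discard any pathological ones.

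\emph{Main obstacle.} The real difficulty is calibrating the successive losses so the procedure succeeds for a \emph{single} universal $\vare$ independent of $\Omega$. Step~2 consumes $\sim\kappa_n^{(a-1)/2}$ of the total mass (from the Chebyshev on the $\eta$-ball), Step~3 only a constant factor. Choosing $\vare$ small enough that the aggregated losses leave a positive mass of admissible pairs at every $n$ large enough is the crux of the argument and fixes the universal $\vare$ appearing in the statement. The exponent $(a-1)/(2\dim\bbg)$ is dictated by the value of $a$ fixed just before the lemma: $a$ was chosen so that $-\gamma$ is \emph{strictly} the most expanded direction under $s^{-1}$-conjugation on $W^-(s)$, and this exponent ensures that when $\{g_n\}$ is fed into the quasi-regular map construction of Section~\ref{secquasi}, the dominance of $w^-(g_{2,n})_\gamma$ survives the rescaling by $s^{b(n)}$ and forces $\operatorname{Im}(\phi)$ to pick up a non-trivial piece inside $U_{-\gamma}$.
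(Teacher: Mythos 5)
Your overall architecture (recurrence at scale $\kappa_n$, a pigeonhole over an $\eta$-grid in the first factor, then isolating the cell where $w^-(g_{2,n})_\gamma$ is neither too large nor too small) matches the shape of the paper's argument, and your reading of the role of the exponent $\frac{a-1}{2\dim\bbg}$ is correct. But the last step of your Step 2 has a genuine gap, and it is exactly where the content of the lemma lies. You argue that the cell $\{\kappa_n^a<|w^-(g_2)_\gamma|<\kappa_n,\ |w^-(g_2)'|<\kappa_n\}$ carries most of the \emph{Haar} measure of the ball $\{|g_2|<\kappa_n\}$ in $G$, and then invoke ``a final pigeonhole.'' This does not follow: the displacements $g_{2,n}=\pi_2(y)\pi_2(x)^{-1}$ between points of $\Omega$ are not distributed according to Haar measure on that ball. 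A priori, for every piece $R_i^j\times B_i$ the set $\pi_2\bigl(\Omega\cap(R_i^j\times B_i)\bigr)$ could be contained in a single translate $\mathcal{N}_\kappa z$ of the thin bad set $\mathcal{N}_\kappa=\{g\in B^G_\kappa(e)\,:\,w^-(g)_\gamma\in B^G_{\kappa^a}(e)\}$, in which case \emph{every} available displacement is bad even though $\mathcal{N}_\kappa$ has tiny Haar measure. What rules this out is the joining hypothesis on the \emph{second} factor, $\pi_{2*}\mu=m_2$, which you never use (you only invoke $\pi_{1*}\mu=m_1$; the paper does not need the joining property on the first factor at all --- closeness of the first components comes from covering the compact set $K_1\supset\pi_1(\Omega)$ by $\sim\eta^{-\dim\bbg}$ balls). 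The paper's proof is a contradiction argument: if no good displacement exists at scale $\kappa_n$, then $\Omega\subseteq\bigcup_{i,j}\pi_2^{-1}(\mathcal{N}_\kappa\pi_2(x_i^j)\cap B_i)$, and the identity $\mu(\pi_2^{-1}(A))=m_2(A)$ turns this into $1-\vare<\mu(\Omega)\leq c\,\eta^{-\dim\bbg}\kappa^{a-1}=c\,\kappa^{(a-1)/2}$, false for large $n$. Note also that the comparison must be made \emph{globally}, summing over all pieces against the total mass $\mu(\Omega)>1-\vare$; a piece-by-piece pigeonhole cannot work, since an individual piece may carry less $\mu$-mass than $m_2(\mathcal{N}_\kappa\cap B_i)$.

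Two smaller points. What you call Poincar\'e recurrence in Step 1 is just a covering/positivity-of-measure argument (no transformation is iterated), and the separate claim $g\notin\nhu$ is not needed there: condition (iii) already forces $w^-(g_{2,n})_\gamma\neq e$, hence $g_{2,n}\notin N_G(U)$ and $g_n\notin\nhu$, so your dimension-count justification for excluding $\nhu$ can be dropped. Finally, the lemma asks for (iii) for all large $n$, not merely for infinitely many $n$; the contradiction argument above delivers this uniformly once $n$ is large, whereas ``discarding pathological $n$'' would not suffice.
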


This lemma is one of the places where we make use of the assumption that $\mu$ is a joining.

\begin{proof}
For each $n\in\bbn$ we need to find $g_n$ which \vspace{.5mm}satisfies (i), (ii), (iii) above. 

For now we fix $n$ and to further simplify notations we write  $\kappa=\kappa_n$ and $\eta=\kappa_n^{\h\frac{a-1}{2\dim\bbg}}.$ We induce a metric on $X$ with the aid of a right invariant metric on $G_{\omega}$ for all $\omega\in S.$ Fix $K_i$ two relatively compact open subsets of $X_i$ respectively such that $\Omega\subset K_1\times K_2.$ Recall that $\mu$ is a joining thus we have $\pi_{2*}(\mu)=m_2.$ If $\Omega_2=\pi_2(\Omega)$ then $\Omega_2$ is a relatively compact set and $m_2(\Omega_2)>1-\vare.$ Taking $n$ large enough we may assume that the ball of radius $\eta$ around each point in $K_i$ is the injective image of the corresponding ball in $G.$ Let $\Omega_2=\bigcup_i B_i$ be a disjoint union of balls of radius $\kappa.$ Further let
\begin{equation}\label{equ:R}\pi_2^{-1}(B_i)\cap \bigl(K_1\times K_2\bigr)=\bigcup_{j\in J(i)} R_i^j\times B_i\end{equation} 
be a disjoint union, where each $R_i^j$ is contained in a ball of radius $2\eta$ in $K_1.$ Note that the number $\#J(i)$ of such sets $R_i^j$ that are needed, is bounded by $c_1\hh\eta^{-\dim\mathbb{G}},$ where $c_1$ is a constant depending on $\Omega.$ 

Using~(\ref{equ:R}) and the fact that in a non-archimedean metric space any point of a ball is the center we see that for all $x_i^j\in R_i^j\times B_i$ we have $B_i=B_{\kappa}^G(\pi_2(x_i^j)).$

Define the set
$$\mathcal{N}_{\kappa}=\{g\in B_{\kappa}^G(e)\h|\h g=w^-(g)_{\gamma}w^-(g)'z(g)w^+(g)\h\mbox{and}\h w^-(g)_{\gamma}\in B_{\kappa^a}^G(e)\}.$$
Now assume the opposite to the lemma that is 
$$\mbox{for all}\h\hh x_i^j\in(R_i^j\times B_i)\cap\Omega\h\hh\mbox{one has}\h\hh (R_i^j\times B_i)\cap\Omega\subseteq\pi_2^{-1}(\mathcal{N}_{\kappa}\pi_2(x_i^j)\cap B_i).$$
This in turn will give
$$\pi_2^{-1}(B_i)\cap\Omega=\bigcup_j\hh (R_i^j\times B_i)\cap\Omega\subseteq\pi_2^{-1}\bigl(\hh\bigcup_j\mathcal{N}_{\kappa}\pi_2(x_i^j)\cap B_i\hh\bigr).$$
It follows from the definition of $\mathcal{N}_{\kappa}$ that $m_2(\mathcal{N}_{\kappa})\leq c_2\kappa^a\kappa^{\dim\bbg-1}=c_2\kappa^{a-1}m_2(B_i)$
for some constant $c_2$ that only depends on the Haar measure $m_2$.  Hence we get 
$$m_2\bigl(\hh\bigcup_j\mathcal{N}_{\kappa}\pi_2(x_i^j)\cap B_i\hh\bigr)\leq (\#\hh J)\hh c_2\kappa^{a-1}m_2(B_i)$$
We now have 
$$\Omega=\bigcup_i\bigcup_{j}(R_i^j\times B_i)\cap\Omega\subseteq\bigcup_i\pi_2^{-1}(\hh\bigcup_j\mathcal{N}_{\kappa}Y\pi_2(x_i^j)\cap B_i\hh).$$
As the balls $B_i$ were chosen to be disjoint this gives
$$1-\vare<\mu(\Omega)\leq\sum_i\mu(\pi_2^{-1}(\hh\bigcup_j\mathcal{N}_{\kappa}\pi_2(x_i^j)\cap B_i\hh))\leq c\hh\eta^{-\dim\mathbb{G}}\kappa^{a-1}$$
Our choice of $\eta$ and $\kappa$ now says for small enough $\eta$ and $\kappa$, i.e.\ for large enough $n,$ the right hand side of the above inequality is less than $1-\vare.$ This contradiction finishes the proof.
\end{proof}
 
The next proposition provides us with the main ingredient to apply entropy arguments, i.e.\ it will provide us an element of class $\mathcal{A}'$ which leaves the measure invariant and does not contract $\n$. In order to prove this proposition we apply the construction recalled in Section~\ref{secquasi} corresponding to the above constructed displacements.

\begin{prop}\label{torusinv}
Let $\n$ and $\mu$ be as above then $\mu=m_1\times m_2$ (in which case the statements below hold trivially also) or
\begin{itemize}
\item[(i)] there exists $\s=({}^{z}\tp,\tp)\in H$ an element of class $\mathcal{A}',$ where $z\in Z_{G_{\nu}}(U)$ and $\tp\in Z_{G_{\nu}}(t)$ such that $\mu$ is $\s$-invariant, and
\item[(ii)] $\n\subset Z_{H}(\s)W^+(\s)$.
\end{itemize}
\end{prop}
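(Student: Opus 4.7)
The plan is to apply the Basic Lemma of Section~\ref{secquasi} to the sequence $\{g_n\}$ produced by Lemma~\ref{sequ}, and then to extract $\s$ from the resulting quasiregular map. Fix $\vare>0$ as in Lemma~\ref{sequ} and choose, by the Egoroff-type argument recalled after Definition~\ref{unifconv}, a set $\Omega$ of uniform convergence relative to averaging nets $\{\lambda_n(A)\}$ with $\mu(\Omega)>1-\vare$. Lemma~\ref{sequ} then furnishes $g_n\to e$ and $x_n\in\Omega$ with $g_n x_n\in\Omega$; passing to a subsequence we may assume $x_n\to x\in\overline{\Omega}$. The decisive structural feature from Lemma~\ref{sequ}(iii) is that the dominant part of the $W^-(s)$-component of $g_n$ lies in $\{e\}\times U_{-\gamma}$. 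In particular, since by~(\ref{nhn-eq}) every element of $N_H(\n)$ has trivial $W^-(s)$-component, we have $g_n\notin N_H(\n)$ for all large $n$. Combined with Proposition~\ref{star}(i), the construction of Section~\ref{secquasi} yields a non-constant, strongly $\n$-quasiregular map $\phi:\n\to L\cap N_H(\n)$. Applying the Basic Lemma, and using $\n$-ergodicity so that $\mu_{y(x)}=\mu$ almost surely, produces invariance of $\mu$ under $\operatorname{Im}(\phi)$.

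Now a dichotomy arises. If $\phi(\n)\subset W^+(s)$, then $\phi(\n)\subset L\cap W^+(s)=U\times\{e\}$, and since $\phi$ is non-constant, $\mu$ acquires invariance under a non-trivial element of $U\times\{e\}$, and so (with $\Delta(U)$-invariance) under the full maximal horospherical subgroup $U\times U$ of $H$. Since $\mu$ is a joining, the horospherical measure classification in positive characteristic~\cite{AmirHoro} then forces $\mu=m_1\times m_2$, landing us in the first alternative of the proposition where (i) and (ii) hold trivially.

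Otherwise $\phi(\n)\not\subset W^+(s)$, meaning $\operatorname{Im}(\phi)$ contains elements with nontrivial $Z(s)$-projection. I parametrize $L\cap N_H(\n)$ as $\{(t'z,t'):t'\in Z_{G_\nu}(t),\ z\in Z_{G_\nu}(U)\}$. Explicit evaluation of the limiting polynomial $\phi$, using that the only non-negligible contribution to $g_n$ in the contracting direction is $(e,w^-(g_{2,n})_\gamma)\in\{e\}\times U_{-\gamma}$, reveals that after conjugation by $s^{b(n)}$ the only limit-surviving interaction with $\Delta(U_\gamma)\subset\n$ takes place inside the rank-one group $G_\gamma=\langle U_\gamma,U_{-\gamma}\rangle$. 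The corresponding Chevalley commutator produces a non-trivial element in the one-parameter torus $T_\gamma\subset Z_{G_\nu}(t)$ coming from $[U_\gamma,U_{-\gamma}]$. Evaluating $\phi$ at a suitable point of $\Delta(U_\gamma)$ and, if needed, splitting off a commuting compact part, yields the desired element $\s=({}^{z}\tp,\tp)\in\operatorname{Im}(\phi)$ of class $\mathcal{A}'$ preserving $\mu$. For part (ii), since $z\in Z_{G_\nu}(U)$ and $\tp\in Z_{G_\nu}(t)$ normalizes $U$, a direct computation gives $\s(u,u)\s^{-1}=\Delta(\tp u\tp^{-1})\in\n$, so $\s$ normalizes $\n$; moreover $\tp$ acts non-contractively on $U$ because its $\mathcal{A}$-eigenvalues on the root spaces of $U$ are determined by the $T_\gamma$ construction tied to the highest root $\gamma$, yielding $\n\subset Z_H(\s)W^+(\s)$.

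The main obstacle is the third paragraph: showing that the $Z(s)$-projection of $\phi$ not only is non-constant but lands in a subgroup permitting extraction of the specific form $({}^{z}\tp,\tp)$ with the class $\mathcal{A}'$ and non-contracting properties. This requires carefully tracking how $\phi$ inherits its $Z(s)$-component from the Chevalley commutators $[U_\gamma,U_{-\gamma}]\subset T_\gamma$, and the quantitative exponent $\tfrac{a-1}{2\dim\bbg}$ in Lemma~\ref{sequ}(iii) is tuned precisely so that this $T_\gamma$-contribution, rather than competing error terms from other root directions, survives in the limit.
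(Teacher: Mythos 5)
Your overall architecture coincides with the paper's: feed the sequence from Lemma~\ref{sequ} into the Basic Lemma, use Proposition~\ref{normalizer} to place $\operatorname{Im}(\phi)$ inside $L\cap N_H(\n)$, and split into a degenerate branch (leading to $\mu=m_1\times m_2$) and a main branch in which $h=\lim_n t^{-b(n)}g_{2,n}t^{b(n)}$ lies in $U_{-\gamma}\setminus\{e\}$ and the $Z(s)$-part of $\phi$ is analysed inside the rank-one group $G_\gamma$. Your main branch is essentially the paper's Case 2; the imprecisions there are matters of bookkeeping that the paper carries out (the $Z(s)$-component arises as the middle factor of the $W^-ZW^+$-decomposition of $u_\gamma(y)h$ in $G_\gamma$, not as a commutator, and the final $\s$ is a power --- the order of a residual unipotent $h'\in C_U(U)$ --- of an element of $\operatorname{Im}(\phi)$, rather than an element of $\operatorname{Im}(\phi)$ itself).

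The genuine gap is in your degenerate branch. From $\phi(\n)\subset U\times\{e\}$ non-constant you infer invariance under ``the full maximal horospherical subgroup $U\times U$'' and then quote the horospherical classification. But $\Delta(U)$ together with a non-trivial, or even unbounded, subset of $U\times\{e\}$ need not generate $U\times U$: the generated group is $N\times\{e\}\cdot\Delta(U)$, where $N$ is the normal closure in $U$ of $\langle\operatorname{Im}\phi_1\rangle$, and in the situation at hand the computation of $L\cap N_H(\n)$ forces $\operatorname{Im}\phi_1$ into $Z_G(U)$, i.e.\ essentially into the center $C_U(U)$ of $U$. For non-abelian $U$ (already for $\mathbb{G}=\operatorname{SL}_3$) this is a proper subgroup of $U\times U$, so no horospherical classification applies. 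The paper closes this branch differently, and this is precisely where the joining hypothesis enters: conditioning on the $\sigma$-algebra $\Xi$ of sets $X_1\times\xi$, the fiber measures $\mu_x^\Xi$ are invariant under the unbounded group $\overline{\langle\operatorname{Im}\phi_1\rangle}\times\{e\}$, their push-forwards $\pi_{1*}\mu_x^\Xi$ average to $m_1$, and $m_1$ is ergodic under every unbounded subgroup of $G$ (Mautner/Howe--Moore, using that $\Gamma_1$ is irreducible and $\mathbb{G}$ is simply connected); hence $\pi_{1*}\mu_x^\Xi=m_1$ a.e.\ and $\mu=m_1\times m_2$. You need this (or an equivalent) argument; the shortcut through $U\times U$-invariance does not work.
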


\begin{proof}
Let $\{g_n\}$ be a sequence which satisfies the claims in Lemma~\ref{sequ}. We construct the quasi-regular map $\phi:\n\rightarrow L$ as in 
Section~\ref{secquasi} corresponding to this sequence $\{g_n\}.$ Property (iii) in Lemma~\ref{sequ} gives $\{g_n\}\subset H\setminus\nhn.$ An application of the Basic Lemma says that $\mu$ is invariant under $\mbox{Im}(\phi).$ Recall that by Proposition~\ref{normalizer} we are also guaranteed that $\mbox{Im}(\phi)\subset\nhn.$ Let us denote $\phi=(\phi_1,\phi_2)$ as $\phi$ is non-constant at least one of $\phi_i$'s are non-constant. There are two cases to consider

\textbf{\textit {Case 1:}} There exists $i$ such that $\phi_i$ is constant. In this case we claim that $\mu=m_1\times m_2$ is the Haar measure on $X.$

 \textit {Proof of the claim:} We give the proof in the case $\phi_2$ is constant, the proof in the other case is identical. As $\phi(e)=e$ we get $\phi_2(u)=e$ for all $u\in\n$ so we have $\phi(u)=(\phi_1(u),e)$ and as $\phi$ is unbounded quasi-regular we have $\phi_1$ is unbounded quasi-regular.

Recall that $X= G/\Gamma_1\times G/\Gamma_2= X_1\times X_2.$ Define the $\sigma$-algebra
$$\Xi=\{X_1\times \xi\h|\h\xi\h\mbox{is a Borel set in}\h X_2\}.$$ 
We let $\mu_x^{\Xi}$ denote the conditional measures, these are probability measures on $[x]_{\Xi}=X_1\times\{\pi_2(x)\}$ and one has
\begin{equation}\label{dis-int}
 \mu=\int_{X}\mu_x^{\Xi}\h d\mu
\end{equation} 
Since $\phi=(\phi_1,e)$ every $A\in\Xi$ is invariant under $\imp.$  This implies together with $\phi(u)$ preserving $\mu$ that all the conditional measures $\mu_x^{\Xi}$ are invariant under $\imp.$ Applying the push forward map $\pi_{1*}$ to \eqref{dis-int} we get
$$\pi_{1*}(\mu)=\int_{X}\pi_{1*}(\mu_x^{\Xi})\h d\mu.$$ 
But $\mu$ is a joining for the action of $U,$ so we have $\pi_{1*}(\mu)=m_1$ which using the above says that
$$m_1=\int_{X}\pi_{1*}(\mu_x^{\Xi})\h d\mu.$$
We now recall that 
\begin{itemize}
\item[(i)] $\phi=(\phi_1,e)$ is unbounded quasi-regular map.
\item[(ii)] $\pi_{1*}(\mu_x^{\Xi})$ is $\imp$-invariant for all $x\in X$
 \item[(iii)] $\Gamma_1$ is an irreducible lattice in $G$ and $G$ is simply connected so $m_1$ is ergodic under any unbounded subgroup of $G.$  
\end{itemize}
putting all these together says $\mu$-a.e. $\mu_x^{\Xi}=m_1\times\delta_{\pi_2(x)}$ so we have $\mu=m_1\times\pi_{2*}(\mu)$ appealing to $\pi_{2*}(\mu)=m_2$  gives that $\mu=m_1\times m_2$ and the claim is proved.

\textbf{\textit {Case 2:}} $\phi_1$ and $\phi_2$ are both non-constant. In this case we will use the particular structure of the sequence $g_n$ in Lemma~\ref{sequ} to prove the existence of $\s$ as in the statement of the proposition.

Recall that $\phi=(\phi_1,\phi_2):\n\rightarrow L\cap\nhn,$ where $$L=W^-(s)Z_H(s)(U\times\{e\})$$ is the cross-section we chose in Section~\ref{secquasi}.
Also recall from \eqref{nhn-eq} that $\nhn=\Delta(P)(Z_G(U)\times Z_G(U))$ so that together we have
\begin{align*}L\cap\nhn =& [W^-(s)Z_H(s)(U\times\{e\})]\cap[\Delta(P)(Z_G(U)\times Z_G(U))] \\ =& [Z_H(s)(U\times\{e\})]\cap[\Delta(P)(Z_G(U)\times Z_G(U))].
\end{align*}  
This says $\phi_2(u)\in Z_G(t)\cap G_{\nu}$ and that $\phi_1(u)=\phi_2(u)\phi^Z_1(u)$ on a Zariski open dense subset of $\n,$ where $\phi_1^Z(u)\in Z_{G_{\nu}}(U).$ 

We show that there exists $u_0$ such that the element $\tp=\phi_2(u_0)$ is from class $\mathcal{A}'$ and  satisfies $U\subset Z_G(\tp)W^+(\tp).$ 


We first take a more careful look at the construction of $\phi.$ Recall that we have
\begin{equation}\label{quasiphi}\phi(u)=\lim_{n\rightarrow\infty}\phi_{n}(u)=\lim_{n\rightarrow\infty}\alpha_n(u)g_n\beta_n(u),\end{equation} 
where $\alpha_n: \n\rightarrow \n$ is a regular map and $\beta_n: \n\rightarrow\n$ is a rational map. 
More precisely $\alpha_n(u)=s^{b(n)}us^{-b(n)}$ where the sequence $b(n)$ consists of renormalization constants that are defined in Section~\ref{secquasi}. 
We now concentrate our attention at $\phi_2$. Taking the second component of \eqref{quasiphi} we get
\begin{equation}\label{quasiphi2'}
 \phi_2(u)=\lim_{n\rightarrow\infty}\phi_{2,n}(u)=\lim_{n\rightarrow\infty}t^{b(n)}u\cdot(t^{-b(n)}g_{2,n}t^{b(n)})\cdot t^{-b(n)}\beta_{2,n}(u),
\end{equation}
where we are ensured that the term $(t^{-b(n)}g_{2,n}t^{b(n)})$ remains bounded as $g_n$ satisfies condition $(*)$. 
Since $\impt\subset Z_{G_{\nu}}(t)$ we may further simplify \eqref{quasiphi2'} and get 
\begin{align*}\phi_2(u)=\lim_{n\rightarrow\infty}\phi_{2,n}(u)=&\lim_{n\rightarrow\infty}t^{b(n)}z(u\cdot(t^{-b(n)}g_{2,n}t^{b(n)}))t^{-b(n)}\\ =&\lim_{n\rightarrow\infty}z(u\cdot(t^{-b(n)}g_{2,n}t^{b(n)}))\end{align*} 
Here $z(\cdot)$ is as in \eqref{decomposition} in Section~\ref{secquasi}. 
Passing to a subsequence we may assume by condition $(*)$ that $t^{-b(n)}g_{2,n}t^{b(n)}\rightarrow h\in W^-(t).$ Since by the assumption in case 2 the maps $\phi_i$ for $i=1, 2$ are non-constant we are guaranteed that $h\neq e.$  We will next get more information about $h$.

Recall now that we chose the sequence $\{g_n\}$ so that it satisfies the property (iii) in Lemma~\ref{sequ}. In particular by the remark proceeding loc.\ cit.\  we have; $\ell^-(w^-(g_{2,n})_\gamma)-\ell^-(w^-(g_{2,n})')$ tends to $+\infty$ as $n$ tends to infinity. This and the fact that $t^{-b(n)}g_{2,n}t^{b(n)}\rightarrow h\in W^-(t)$ give $h=h_{\gamma}\in U_{-\gamma},$ where the notation is as in Lemma~\ref{sequ}.

We now consider $\phi_2(u_{\gamma}(y))$ where $y\in K_{\nu}^k$. Using the arguments and the observations\footnote{As $e$ is in the domain of all rational functions considered the same is true for an open neighborhood of $e$, which in turn shows that the rational functions considered are also defined on a Zariski-open dense subset of $U_\gamma$.} above we have for any $y\in K_{\nu}^k$ that
\begin{equation}\label{special-shape}
 \phi_2(u_{\gamma}(y))=z(u_{\gamma}(y)h).
\end{equation}
As before we denote by $G_{\gamma}$ the algebraic group generated by $U_\gamma$ and $U_{-\gamma}$ which is the group of $K_{\nu}$-points of an algebraic group whose $K_{\nu}$-rank is one. 
Observe that $\phi_2$ is a $U$-quasiregular map. Since under our assumption it is not constant it will be unbounded in $G/U.$ Now due to the formula in \eqref{special-shape} we can find some $y$ such that $\tp=\phi_2(u_\gamma(y))$
is of class $\mathcal{A}'$.

As $\gamma$ is the highest root we get that $U\subset Z_G(\tp)W^+(\tp)$.  Now consider $h=\phi_1^Z(u_\gamma(y))$ (after possibly changing $y$ slightly to make this expression well defined). By the above $h\in C_G(U)$. Since $C_G(U)/C_U(U)$ is finite (equal to the image of the center of $G$) and $U_\gamma$ is connected, we get that $h\in C_U(U)$. Notice that $C_U(U)$ is just an affine space and $C_U(U)$ splits under the action of $\tp$ into two subspaces, namely $C_U(U)\cap C_U(\tp)$ and a $\tp$-invariant complement.
Using this we may find some $h'\in C_U(U)\cap C_U(\tp)$ and $z\in C_U(U)$ with $\tp h=z\tp z^{-1}h'$ where $h'$ and $z\tp z^{-1}$ commute. Now after raising to a suitable power of the characteristic of the field, i.e. the order of $h'$, we have that $\s=({}^z\tp,\tp)$ leaves $\mu$ invariant as required.    
\end{proof}

Let us fix some further notations, as before $\mu$ is an ergodic joining for the action of $U$. For our fixed place $\nu\in S$ we define
for any automorphisms $\tau$ of $\Gnu$ the subgroup
$$\Gnu=({}^{\tau}\Delta(G_{\nu}))\prod_{\omega\neq\nu}(G_{\omega}\times G_{\omega}).$$
For future references we also define for $z\in G$ the subgroups ${}^{z}\Delta(G_{\nu})={}^{(z, e)}\Delta(G_{\nu})$ and ${}^{z}H(\nu)={}^{\tau}H(\nu),$
where $\tau$ is the inner automorphism defined by $(z,e)$.

Assume $\tp$ is an element of class $\mathcal{A}'$ and $z\in Z_{G_{\nu}}(U)$ such that $\s=({}^{z}\tp,\tp)$ leaves the measure $\mu$ invariant. 
Note that $\s\in\nhn$ and $\n\subset M_{\s}W^+(\s).$ We let $\n_{\s}^+=W^+(\s)\cap\n.$ Define 
$$\mathcal{F}(\s)=\{g\in H\h|\h\n_{\s}^+g\subseteq \overline{W^-(\s)M_{\s}\n_{\s}^+}\}$$
which in fact is a subgroup of $H.$  Let $\tau$ be the inner automorphism induced by $(z,e)$ where $z\in Z_{G_{\nu}}(U)$ is as in definition of the element $\s.$
As $\Gnu$ is a group containing $\n_\s^+$ and contained in $\overline{W^-(\s)M_{\s}\n_{\s}^+}$,
we clearly have $\Gnu\subseteq\mathcal{F}(\s)$. Moreover, due to the maximality of $\Gnu$ in $H,$ it is easy to see that $\mathcal{F}(\s)= \Gnu.$  Define $\n_{\s}^-=\mathcal{F}(\s)\cap W^-(\s).$ Note that $\s$ normalizes both $\n_{\s}^+$ and $\n_{\s}^-.$

We again make use of the description of $\s$ and the description of $\nhn$ in $H$ and notice that one has $\n_{\s}^+=\Delta(W^+_G(\tp))$ and $\n_{\s}^-={}^z\Delta(W^-_G(\tp)).$ As in Section~\ref{secquasi} we let $\mathcal{V}_{\s}^{\pm}$ be the cross-section for $\n_{\s}^{\pm}$ in $W^{\pm}(\s)$ respectively, as defined in loc.\ cit. The functions $\ell^{\pm}$ there will be needed here too.

\begin{remark}\label{maximal}
Note that if $\mu$ is invariant under $(x_{1n},x_{2n})\in W^+(\s)$ where $x_{1n}x_{2n}^{-1}\rightarrow\infty$ in $G,$ then $\mu=m_1\times m_2.$ This follows if one argues as in case 1 of Proposition~\ref{torusinv}. (This says if $\mu\neq\haar$ then $\n^+$ has maximum dimension in the class of split $K_{\nu}$-algebraic subgroups of $W^+(\s)$ which leave $\mu$ invariant.)
\end{remark}

The following is closely related to \cite[Proposition 8.3]{MT}.

\begin{prop}\label{boundedbelow}
Let the notations and conventions be as above. Then at least one of the following two cases holds:
\begin{itemize}
\item[(i)] $\mu=\haar.$ 
\item[(ii)] For every $\vare>0$ there exists a compact subset $\Omega_{\vare}$ of $X$ with $\mu(\Omega_{\vare})>1-\vare$ such that if $\{g_n\}\in H\setminus N_H(\n_{\s}^+)$ is a sequence with $g_n\rightarrow e$ and $g_n\Omega_{\vare}\cap\Omega_{\vare}\neq\emptyset$ for every $n,$ then the sequence $\{\ell^-(v^-(g_n))-\ell^-(u^-(g_n))\}$ tends to $-\infty.$
\end{itemize}
\end{prop}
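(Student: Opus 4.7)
The plan is to argue by contradiction, applying the construction of Section~\ref{secquasi} to $\s$ and $\n_{\s}^+$ (legitimate by Remark~\ref{agoestoa'} since $\s$ is of class $\mathcal{A}'$) and using the maximality recorded in Remark~\ref{maximal}.

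Suppose neither (i) nor (ii) holds. Since (i) fails, Proposition~\ref{torusinv} supplies an element $\s=({}^z\tp,\tp)$ of class $\mathcal{A}'$ preserving $\mu$, and Remark~\ref{maximal} ensures that $\n_{\s}^+$ is of maximal dimension among $K_{\nu}$-split algebraic subgroups of $W^+(\s)$ leaving $\mu$ invariant. Because (ii) also fails, there is some $\vare_0>0$ such that for every compact $\Omega\subset X$ with $\mu(\Omega)>1-\vare_0$ one can find $\{g_n\}\subset H\setminus N_H(\n_{\s}^+)$ with $g_n\to e$, $g_n\Omega\cap\Omega\neq\emptyset$, and $\{\ell^-(v^-(g_n))-\ell^-(u^-(g_n))\}$ bounded from below.

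The first concrete step is to choose $\Omega$ carefully: fix a relatively compact non-null $A\subset\n_{\s}^+$ and take $\Omega$ to be a set of uniform convergence (Definition~\ref{unifconv}) for the $\n_{\s}^+$-action on $(X,\mu)$ with respect to the averaging nets $\la_n(A)$ of Lemma~\ref{averlem}, where $\la_n(u)=\s^n u\s^{-n}$; the Egoroff argument recalled after Definition~\ref{unifconv} allows us to arrange $\mu(\Omega)>1-\vare_0$. Feeding $\Omega$ into the failure of (ii) produces a bad sequence $\{g_n\}$, and after passing to a subsequence we obtain $x_n\in\Omega$ with $g_nx_n\in\Omega$ and $x_n\to x\in\Omega$. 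Next, run the construction of Section~\ref{secquasi} with $\s$, $\n_{\s}^+$ and $\{g_n\}$ in place of the $s$, $\n$ and displacement sequence used there, producing a strongly $\n_{\s}^+$-quasiregular map $\phi:\n_{\s}^+\to H$ with $\phi(e)=e$, non-constant, with $\mbox{Im}(\phi)\subset N_H(\n_{\s}^+)$ and not contained in any $C\n_{\s}^+$ for $C$ compact (Proposition~\ref{normalizer}). Proposition~\ref{star}(i) says $\{g_n\}$ satisfies condition $(*)$, and the lower bound on $\ell^-(v^-(g_n))-\ell^-(u^-(g_n))$ combined with Proposition~\ref{star}(ii) forces $\mbox{Im}(\phi)\subset W^+(\s)$. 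The Basic Lemma then yields that the $\n_{\s}^+$-ergodic component $\mu_{y(x)}$ is invariant under $\mbox{Im}(\phi)$.

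The main obstacle is the final step: upgrading invariance of a single ergodic component under $\mbox{Im}(\phi)$ to invariance of $\mu$ itself under a $K_{\nu}$-split algebraic subgroup of $W^+(\s)$ strictly larger than $\n_{\s}^+$, since such invariance would contradict the maximality in Remark~\ref{maximal}. The standard device, which I would follow, is to observe that for a set of base points of positive $\mu$-measure in $\Omega$ one can extract such a sequence and a corresponding quasi-regular map $\phi$, and then to combine the resulting measurable family of extra invariances with the $\s$-invariance of $\mu$ and the algebraic structure of the Zariski closure of $\mbox{Im}(\phi)\,\n_{\s}^+$ inside $W^+(\s)\cap N_H(\n_{\s}^+)$. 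This should produce a single strictly larger split subgroup preserving all of $\mu$, delivering the required contradiction and completing the proof.
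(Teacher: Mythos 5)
Your setup (choosing $\Omega$ as a set of uniform convergence, extracting the bad sequence, building the quasi-regular map $\phi$, and invoking Propositions~\ref{normalizer} and~\ref{star} together with the Basic Lemma to get that a single ergodic component $\mu_{y(x)}$ is invariant under the unbounded set $\mbox{Im}(\phi)\subset W^+(\s)$) matches the paper up to that point. But the step you flag as the ``main obstacle'' is a genuine gap, and the device you sketch for it is not the one that works. The failure of (ii) hands you only \emph{one} bad sequence for each choice of $\Omega$, hence extra invariance of essentially one ergodic component $\mu_{y(x)}$; there is no positive-measure family of base points with extra invariance to ``combine'', so you cannot upgrade to invariance of $\mu$ itself and then contradict Remark~\ref{maximal} as stated for $\mu$.

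The paper resolves this by never upgrading. It first observes that for $\sigma$-a.e.\ $y$ the component $\mu_y$ (for the $\n_{\s}^+$-ergodic decomposition) is itself a joining, so the Case~1 argument of Proposition~\ref{torusinv} applies componentwise: if $\mu_{y(x)}$ is invariant under a sequence $(x_{1n},x_{2n})\in W^+(\s)$ with $x_{1n}x_{2n}^{-1}\to\infty$, then $\mu_{y(x)}=\haar$. It then shows the set $\Upsilon$ of such $x$ is invariant under $R=\langle\s,\n\cap M_{\s}\rangle$, and $\mu$ is $R$-ergodic by the generalized Mautner lemma, so $\Upsilon$ is null unless $\mu=\haar$. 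The crucial move you are missing is to choose $\Omega_{\vare}$ \emph{inside $X\setminus\Upsilon$} from the start (in addition to being a set of uniform convergence). With that choice, the conclusion that $\mbox{Im}(\phi)$ is unbounded modulo $\n_{\s}^+$ and lands in $W^+({}^z\tp)\times\{e\}$ forces $\mu_{y(x)}=\haar$, i.e.\ $x\in\Upsilon$, directly contradicting $x\in\Omega_{\vare}$. Without pre-excluding $\Upsilon$ your argument stalls exactly where you say it does.
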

  
\begin{proof}
There is nothing to prove if (i) above holds, so assume $\mu\neq\haar.$ Define $R=\langle\s,\n\cap M_{\s}\rangle,$ the group generated by $\s$ and $\n\cap M_{\s}.$ Note that $\mu$ is $R$-ergodic thanks to generalized Mautner lemma~\cite[Lemma 3]{Mar1}. We fix an ergodic decomposition $\mu=\int_Y\mu_yd\sigma$ of $\mu$ for $\n_{\s}^+.$ As before for any $x\in X$ we let $y(x)$ denote the corresponding point in $(Y,\sigma).$ As $R$ normalizes $\n_{\s}^+$ we have some $R$-action on $Y,$ which in fact is a factor, i.e.\ $\mu_{y(gx)}=\mu_{gy(x)}$ for every $g\in R.$ Note also that for $\sigma$-almost every $y$ we have $\pi_{i*}\mu_y=m_i.$ This is to say for $\mu$ almost every $x\in X$ the measure $\mu_{y(x)}$ is an ergodic joining for $W^+(\tp)$ on $X.$  

We say $(\dagger)$ holds for $x\in X$ if there exist $(x_{1n},x_{2n})\in W^+(\s),\h n\in\bbn$ where $x_{1n}x_{2n}^{-1}\rightarrow\infty$ in $G,$ and $\mu_{y(x)}$ is invariant under $\{(x_{1n},x_{2n})\}$.

We claim the set of $x\in X$ for which $(\dagger)$ holds is a null set. To see the claim note that Remark~\ref{maximal} says that if $(\dagger)$ holds for some $x\in X$ then $\mu_{y(x)}=\haar,$ so we have 
$$\Upsilon=\{x\in X\h|\h(\dagger)\h\mbox{holds for}\h x\}=\{x\in X\h|\h\mu_{y(x)}=\haar\}$$ 
As a result $\Upsilon$ is a measurable set and is invariant under the action of $R.$ However $\mu$ is $R$ ergodic so either $\mu=\haar$ which we are assuming not to be the case or the set $\Upsilon$ is a null set as we wished to show.

This says we may find a compact set of uniform convergence $\Omega_{\vare}\subset X\setminus\Upsilon$ for $\n_{\s}^+,$ with $\mu(\Omega_{\vare})>1-\vare.$ Now assume $\{g_n\}\in H\setminus N_H(\n_{\s}^+),$ such that $g_n\rightarrow e$ and $g_n\Omega_{\vare}\cap\Omega_{\vare}\neq\emptyset$ but $\{\ell^-(v^-(g_n))-\ell^-(n^-(g_n))\}$ is bounded from below. We let $x_n\in\Omega_{\vare}$ such that $g_nx_n\in\Omega_{\vare}$ and $x_n\rightarrow x\in\Omega_{\vare}.$ One then constructs the quasi regular map $\phi$ corresponding to this sequence $\{g_n\}$ and $\s.$ The basic lemma says $\mu_{y(x)}$ is invariant under $\mbox{Im}(\phi).$ The construction of $\phi$ and our assumption on $\{g_n\}$ thanks to Proposition~\ref{star} says that $\mbox{Im}(\phi)\subset W^+({}^z\tp)\times\{e\}.$ However Proposition~\ref{normalizer} gives $\mbox{Im}(\phi)$ is not in $C\n_{\s}^+$ for any compact set $C$ of $H.$ All these put together using Remark~\ref{maximal} contradict the fact $x\in X\setminus\Upsilon.$ This finishes the proof of the proposition.  
\end{proof} 

\begin{cor}\label{displacement}(cf.~\cite[Corollary 8.4]{MT})\\
Let the notations be as in the Proposition~\ref{boundedbelow} and that $\mu\neq m_1\times m_2$. Then there exists a subset $\Psi$ in $X$ with $\mu(\Psi)=1$ such that $W^-(\s)x\cap\Psi\subset\n_{\s}^-x,$ for every $x\in\Psi.$  
\end{cor}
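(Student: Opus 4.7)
The plan is to deduce the corollary from Proposition~\ref{boundedbelow}(ii) by pushing any alleged bad displacement $g\in W^-(\s)\setminus\n_\s^-$ into the contracting regime of $\s$ and then appealing to recurrence of $\mu$ under the measure-preserving map $\s$.

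For each integer $k\geq 1$ I would first fix the compact set $\Omega_{1/k}$ supplied by Proposition~\ref{boundedbelow}(ii), which satisfies $\mu(\Omega_{1/k})>1-1/k$. Since $\s$ preserves $\mu$, the pointwise ergodic theorem applied to the $\bbz$-action generated by $\s$ and to the indicator $\chi_{\Omega_{1/k}}$ produces a conull set $E_k$ on which the forward Cesàro averages $\frac{1}{N}\sum_{n=0}^{N-1}\chi_{\Omega_{1/k}}(\s^n x)$ converge to some $f_k(x)$ with $\int f_k\,d\mu=\mu(\Omega_{1/k})$. Restricting to $\{x\in E_k:f_k(x)>1-2/k\}$ (still of full measure) and intersecting over $k\geq 3$, I would define the desired set $\Psi$ with $\mu(\Psi)=1$.

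Now suppose for contradiction that $x\in\Psi$ admits $y=gx\in\Psi$ with $g\in W^-(\s)\setminus\n_\s^-$, and write $g=u^-(g)v^-(g)$ under the cross-section decomposition $W^-(\s)=\n_\s^-\v_\s^-$, so $v^-(g)\neq e$. Set $g_n=\s^n g\s^{-n}$ and $x_n=\s^n x$, so $g_nx_n=\s^n y$. Two facts are straightforward: first, $g_n\to e$ since $g\in W^-(\s)$, and $g_n\notin N_H(\n_\s^+)$ because any $g\in W^-(\s)\cap N_H(\n_\s^+)$ would lie in $W^-(\s)\cap\mathcal{F}(\s)=\n_\s^-$, contrary to hypothesis; second, from the scaling relation \eqref{e;ell-func} one has $\ell^-(u^-(g_n))=\ell^-(u^-(g))-n$ and $\ell^-(v^-(g_n))=\ell^-(v^-(g))-n$, so the difference $\ell^-(v^-(g_n))-\ell^-(u^-(g_n))$ is identically constant (equal to $\ell^-(v^-(g))-\ell^-(u^-(g))$, or $+\infty$ if $u^-(g)=e$) and in particular bounded below in $n$.

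To close the argument, I would pick $k\geq 3$ with $2(1-2/k)>1$; since both $x$ and $y$ lie in $\Psi$, each has $\s$-density of returns to $\Omega_{1/k}$ strictly greater than $1-2/k$, so a pigeon-hole count produces infinitely many $n$ with $\s^n x$ and $\s^n y$ simultaneously in $\Omega_{1/k}$, i.e.\ $g_n\Omega_{1/k}\cap\Omega_{1/k}\neq\emptyset$. Proposition~\ref{boundedbelow}(ii) would then force $\ell^-(v^-(g_n))-\ell^-(u^-(g_n))\to-\infty$, contradicting the boundedness of this difference. Hence $v^-(g)=e$, i.e.\ $g\in\n_\s^-$, as required. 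The principal technical point is the simultaneous recurrence of the two endpoints $x$ and $gx$ under $\s^n$ back to a common compact set of uniform convergence; individual Birkhoff recurrence is immediate, but turning it into joint recurrence requires the density lower bound, which is the reason for passing from $\Omega_{1/k}$ to the pointwise-ergodic refinements $\Psi_k$.
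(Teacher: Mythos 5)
Your overall strategy is exactly the one the paper intends (it simply defers to \cite[Cor.~8.4]{MT}): contract a bad displacement $g\in W^-(\s)\setminus\n_\s^-$ by $\s^n$, use joint recurrence of $x$ and $gx$ to a fixed $\Omega_\vare$ to feed the sequence $g_n=\s^ng\s^{-n}$ into Proposition~\ref{boundedbelow}(ii), and contradict the fact that $\ell^-(v^-(g_n))-\ell^-(u^-(g_n))$ is constant in $n$. That second half of your argument is correct, including the observation that $W^-(\s)\cap N_H(\n_\s^+)\subseteq\mathcal{F}(\s)\cap W^-(\s)=\n_\s^-$ and the use of \eqref{e;ell-func}.

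There is, however, a genuine error in your construction of $\Psi$: the parenthetical claim that $\{x\in E_k: f_k(x)>1-2/k\}$ is ``still of full measure'' is false. From $\int f_k\,d\mu=\mu(\Omega_{1/k})>1-1/k$ and $0\le f_k\le 1$, Markov's inequality only gives $\mu(\{f_k\le 1-2/k\})\le\tfrac{1/k}{2/k}=\tfrac12$ (and $\mu$ need not be $\s$-ergodic, so $f_k$ need not be a.e.\ constant; one cannot do better in general). Consequently your countable intersection over $k\ge3$ of sets whose complements have measures that are not summable may fail to be conull --- it could even be null --- so the set $\Psi$ you produce need not have measure $1$. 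The repair is standard: decouple the quality of $\Omega$ from the density threshold so that the exceptional sets become summable, e.g.\ take $\Omega_{\vare_k}$ with $\vare_k=4^{-k}$, let $f_k$ be the Birkhoff limit of the averages of $\chi_{\Omega_{\vare_k}}$ along $\{\s^n\}_{n\ge0}$, and set $\Psi_k=\{f_k>1-2^{-k}\}$, so that $\mu(\Psi_k^c)\le 2^{-k}$; then define $\Psi=\bigcup_{j}\bigcap_{k\ge j}\Psi_k$, which is conull by Borel--Cantelli. Any two points $x,gx\in\Psi$ still lie in a common $\Psi_k$ with $1-2^{-k}>\tfrac12$, so your pigeon-hole count of simultaneous return times goes through verbatim and the rest of your argument closes the proof.
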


\begin{proof}
The proof follows the same lines as in the proof of \cite[Cor.~8.4]{MT} thanks to Proposition~\ref{boundedbelow} above.
\end{proof}

The following is an important application of entropy arguments which was proved in~\cite[Sect.~9]{MT}. Let us point out again that \cite{MT} assumes the characteristic to be zero, however this assumption is not needed here as one would imagine thanks to the geometric nature of entropy arguments. Here we have $X$ is as before and $\sigma$ is any probability measure on $X.$

\begin{thm}\label{entropy}(cf.~\cite[Thm.~9.7]{MT})\\
Assume $\s$ is an element from class $\mathcal{A}'$ which acts ergodically on the measure space $(X,\sigma).$ Let $V$ be a $K_{S}$ subgroup of $W^-(\s)$ normalized by $\s.$ Put $\alpha=\alpha(\s^{-1},V).$
\begin{itemize}
\item[(i)] If $\sigma$ is $V$-invariant, then $h(s,\sigma)\geq\log_2\alpha.$
\item[(ii)] Assume that there exists a subset $\Psi\subset X$ with $\sigma(\Psi)=1$ such that for every $x\in\Psi$ we have $W^-(\s)x\cap\Psi\subset V^-x.$ Then $h(\s,\sigma)\leq\log_2(\alpha)$ and the equality holds if and only if $\sigma$ is $V$-invariant.
\end{itemize}   
\end{thm}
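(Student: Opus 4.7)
The plan is to follow Margulis--Tomanov \cite[Thm.~9.7]{MT} essentially verbatim, the key observation being that the entropy arguments there are geometric in nature and only use features that hold over any local field: the product structure $\mathcal{D}(\s)=W^-(\s)M_{\s}W^+(\s)$ is Zariski open around the identity, the Haar measure on the split unipotent group $V$ scales by $\alpha=\alpha(\s^{-1},V)$ under conjugation by $\s^{-1}$, and both the Brin--Katok formula and the construction of leafwise measures are available in the $S$-arithmetic setting. As a preliminary reduction, writing $\s=\s'\gamma$ with $\s'$ of class $\mathcal{A}$ and $\gamma$ commuting with $\s'$ and generating a compact subgroup, we have $W^\pm(\s)=W^\pm(\s')$, $M_\s=Z(\s')$, and $\gamma$ preserves Haar measure on $V$, so one reduces without loss of generality to the class $\mathcal{A}$ case (cf.\ Remark~\ref{agoestoa'}).

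For part (i), I would choose a relatively compact open neighborhood $V_0\subset V$ of the identity with $\s^{-1}V_0\s\supset V_0$, and form the Bowen ball $V_n=\bigcap_{k=0}^{n-1}\s^kV_0\s^{-k}$; its Haar measure satisfies $m_V(V_n)\asymp\alpha^{-n}$ by the very definition of $\alpha$. Together with small neighborhoods $M_0\subset M_\s$ and $W_0^+\subset W^+(\s)$ of $e$, the products $V_nM_0W_0^+$ are genuine Bowen balls in $H$ for the $\s$-action thanks to the product decomposition of $\mathcal{D}(\s)$. Since $\sigma$ is $V$-invariant, the conditional measures of $\sigma$ along $V$-plaques are proportional to Haar, so $\sigma(V_nM_0W_0^+\cdot x)\asymp c(x)\alpha^{-n}$ on a set of positive measure; the Brin--Katok formula then yields $h_\sigma(\s)\geq\log_2\alpha$.

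For part (ii), the hypothesis $W^-(\s)x\cap\Psi\subset Vx$ for $\sigma$-a.e.\ $x$ means that the leafwise measures of $\sigma$ along the full stable foliation $W^-(\s)$ are supported on the $V$-suborbits. The upper bound $h_\sigma(\s)\leq\log_2\alpha$ follows because only this ``visible'' $V$-direction can contribute to entropy, and its maximal expansion rate under $\s^{-1}$ is exactly $\alpha$. For the equality clause, if $h_\sigma(\s)=\log_2\alpha$, the leafwise measures on $V$-orbits must realize the full volume expansion rate, which by a Ledrappier-type rigidity argument forces them to coincide (up to scalar) with Haar measure on $V$; equivalently, $\sigma$ is $V$-invariant.

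The main obstacle is the equality case in (ii), which requires non-archimedean analogues of martingale convergence, a Besicovitch-type differentiation of measures, and disintegration of leafwise measures along unipotent foliations. In the ultrametric setting these steps are actually cleaner than in characteristic zero, since balls in $V$ may be taken to be compact open subgroups that are either disjoint or nested, trivializing the covering/differentiation step. Once these standard measure-theoretic facts are in place, the proof of \cite[Thm.~9.7]{MT} transfers word for word, confirming the authors' remark on the characteristic-independence of the entropy arguments.
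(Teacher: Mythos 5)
Your proposal matches the paper's treatment: the paper gives no independent proof of this theorem, but simply cites \cite[Thm.~9.7]{MT} and observes that the entropy arguments there are geometric and therefore carry over to positive characteristic unchanged, which is exactly your strategy. Your sketch of the internal steps of the Margulis--Tomanov argument (Bowen balls built from the product decomposition $W^-(\s)M_{\s}W^+(\s)$, the scaling of Haar measure on $V$ by $\alpha$, support of leafwise measures on $V$-orbits for the upper bound, and the equality case forcing Haar conditionals) is a faithful outline of that proof and of the reasons it is characteristic-independent.
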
 

After these preparations we now have all the ingredients required to finish the proof of the classification of joinings.

\begin{proof}[Proof of Theorem~\ref{joining}]
 We again assume $\mu\neq m_1\times m_2.$

\vspace{1mm}
\textbf{\textit {Step 1:}} $\mu$ is invariant under $\n_{\s}^-.$

Thanks to Corollary~\ref{displacement} there exists a full measure subset $\Psi\subset X$ such that $W^-(\s)x\cap\Psi\subset\n_{\s}^-x$ for every $x\in\Psi.$ Let $\mu=\int_Y\mu_yd\sigma$ be an ergodic decomposition of $\mu$ into $\langle\s\rangle$-ergodic components. Now thanks to Mautner's lemma~\cite[Lemma 3]{Mar1}
every $\mu_y$ is $\n_{\s}^+$-invariant. Also $\mu_y(\Psi)=1$ for $\sigma$ almost every $y.$ Let $y\in Y$ be such a point. As $h(\s,\mu_y)=h(\s^{-1},\mu_y)$ Theorem~\ref{entropy} above gives 
$$\log_2\alpha(\s,\n_{\s}^+)\leq h(\s,\mu_y)\leq\log_2\alpha(\s^{-1},\n_{\s}^-)$$
However $\alpha(\s^{-1},\n_{\s}^-)=\alpha(\s,\n_{\s}^-)^{-1}$. Note that since $\mathcal{F}(\s)$ is a semisimple group it is in particular unimodular. We have  
$$\alpha(\s,\mathcal{F}(\s))=\alpha(\s,\n_{\s}^+)\alpha(\s,\n_{\s}^+)=1$$ 
Which gives 
$$h(\s,\mu_y)=\log_2\alpha(\s^{-1},\n_{\s}^-)$$ 
Now Theorem~\ref{entropy} (ii) gives $\mu_y$ is $\n_{\s}^-$-invariant for any such $y$. As this was a full measure set with respect to $\sigma,$ we get $\mu$ is invariant under $\n_{\s}^-.$

\vspace{1mm}
\textbf{\textit {Step 2:}} $\mu$ is invariant under ${}^{\tau}\Delta(G_{\nu})$ for some $\tau$ as in the Theorem~\ref{joining}.

This follows from the description of $\n_{\s}^+$ and $\n_{\s}^-$ given above, let us recall that our earlier observations said $\n_{\s}^+=\Delta(W^+(\tp))$ and $\n_{\s}^-= {}^{(z,e)}\Delta(W^-(\tp)).$ Now step 1 above says $\mu$ is invariant under $\n_{\s}^-$ and by our assumption we have the invariance under $\n_{\s}^+$ so $\mu$ is invariant under $\langle\n_{\s}^+,\n_{\s}^-\rangle={}^{(z,e)}\Delta(G_{\nu}),$ where the latter follows as we assumed that $G_\nu$ is connected, simply connected, almost simple (see for example~\cite[Theorem 2.3.1]{Mar5}).

\vspace{1mm}
\textbf{\textit {Step 3:}} Completion of the proof of Theorem~\ref{joining}.

Assume first that for some $x\in X$ we have $\mu({}^{z}H(\nu)x)>0$. As this set is $\n$ invariant and $\mu$ is $\n$-ergodic, we then have $\mu({}^{(z,e)}H(\nu)x)=1.$ Note that this is not necessarily a closed subset of $X.$  Now let ${}^{(z,e)}H(\nu)_x$ denote the stabilizer of $x$ in ${}^{(z,e)}H(\nu),$ this is a discrete subgroup of ${}^{(z,e)}H(\nu)$ and we may view $\mu$ as a measure on ${}^{(z,e)}H(\nu)/{}^{(z,e)}H(\nu)_x$. The measure $\mu$ is  ${}^{(z,e)}\Delta(G_{\nu})$-invariant and ergodic and ${}^{(z,e)}\Delta(G_{\nu})$  is a normal subgroup of ${}^{z}H(\nu)$. Therefore,  Lemma~\ref{normal-unimodular} guarantees that $\mu$ is the $\Sigma=\overline{{}^{(z,e)}\Delta(G_{\nu}){}^{(z,e)}H(\nu)_x}$-invariant measure on a closed $\Sigma$-orbit on ${}^{(z,e)}H(\nu)/{}^{(z,e)}H(\nu)_x$. However, this also implies that $\mu$ is the $\Sigma$-invariant measure on a closed $\Sigma$-orbit on $X$. We now study the structure of $\Sigma$ in more details. Let us write $x=g_1\Gamma_
 1\times g_2\Gamma_2$ and let $\Sigma_0=(g_1, g_2)^{-1}\Sigma(g_1, g_2).$ We showed that the orbit of $\Sigma_0$ from $(e, e)$ is closed and has a $\Sigma_0$-invariant probability measure on it. Hence 
\begin{equation}~\label{sigma}
  \Lambda=\Sigma_0\cap\Gamma_1\times\Gamma_2=\{(\gamma_1,\gamma_2)\in\Gamma_1\times\Gamma_2\h|\h \gamma_1=g_2g_1^{-1}z\gamma_2z^{-1}g_1g_2^{-1}\}
\end{equation}
 is a lattice in $\Sigma_0.$  Let $\mathbb{A}$ and $\mathbb{B}$ be the Zariski closure of $\Lambda$ and $\Sigma_0$ respectively. Clearly $\bba\subseteq\bbb.$ Let $A=\bba(K_S)$ and $B=\bbb(K_S).$ We claim that $A=B.$ This is a version of Borel density theorem. Let us recall the proof here.
We consider the natural map
$$\iota:{}^{(g_1^{-1}z,g_2^{-1})}H(\nu)/{}^{(g_1^{-1}z,g_2^{-1})}H(\nu)_x\rightarrow {}^{(g_1^{-1}z,g_2^{-1})}H(\nu)/A$$
Let $\mu_1$ be the push-forward of $\mu.$ Now $\mu_1$ is $\u$-ergodic on a $K_S$-variety so by lemma~\ref{bz-measure} we have this measure is concentrated on a single point which is to say $\Sigma\subseteq A.$ The claim is proved.

Recall now that $\Gamma_i$'s are arithmetic, more precisely after passing to a finite index subgroup $\Gamma_i$'s are the $S$-integer points of algebraic groups defined over the global field. Also recall that 
$$\Sigma_0=\overline{{}^{(g_{1\nu}^{-1}z,g_{2\nu}^{-1})}\Delta(G_{\nu}){}^{(g_1^{-1}z,g_2^{-1})}H(\nu)_x}$$ 
So $p_{\nu}(A)=p_{\nu}(B)={}^{(g_{1\nu}^{-1}z,g_{2\nu}^{-1})}\Delta(G_{\nu})$ where $p_{\nu}$ is the projection onto the $\nu$-component of $H$. We utilize~(\ref{sigma}) above and have $p_{\nu}(\Delta(\Gamma_1\cap{}^{g_2^{}g_1^{-1}z}\Gamma_2))$ is Zariski dense in ${}^{(g_{1\nu}^{-1}z,g_{2\nu}^{-1})}\Delta(G_{\nu}).$ But this says $\Gamma_1$ and ${}^{g_2^{}g_1^{-1}z}\Gamma_2$ give the same global structure to $G_{\nu}.$ Hence $\Gamma_1$ and ${}^{g_2^{}g_1^{-1}z}\Gamma_2$ have the same global structure. Thus $\Lambda$ is commensurable to ${}^{(g_1^{-1}z,g_2^{-1})}\Delta(\Gamma_2).$ Now since $\bbg$ is simply connected and $G_{\nu}$ is not compact it follows from strong approximation Theorem (see for example~\cite[chapter 2, section 6]{Mar5}) that $\Sigma_0={}^{(g_1^{-1}z,g_2^{-1})}\Delta(G)=B=A,$ which gives the second possible conclusion of the theorem.

Hence we may assume $\mu({}^{z}H(\nu)x)=0$ for every $x\in X.$  Let $\n^-={}^{(z,e)}\Delta(U^-)$ where $U^-$ denotes the maximal unipotent subgroup of $G_{\nu}$ opposite to $U.$  Note that $\mu$ is invariant and ergodic for ${}^{z}\Delta(G_{\nu})$ as a result it is invariant and ergodic for both $\n$ and $\n^-.$ Let $\Omega_{\vare}$ be a compact set of uniform convergence for the action of both $\n$ and $\n^-,$ which satisfies $\mu(\Omega_{\vare})>1-\vare$ for some ``small" $\vare>0.$ Let
$$
M=(G_{\nu}\times\{e\})\times\prod_{\omega\neq\nu}(G_{\omega}\times G_{\omega})
$$
Note that $M$ is a cross-section for ${}^{z}\Delta(G_{\nu})$ in H.
Since $\mu({}^{z}H(\nu)x)=0$ we may argue as in~\cite[Lemma 3.3]{MT} and find a sequence $\{h_n\}\subset M\setminus\left(\prod_{\omega\neq\nu}(G_{\omega}\times G_{\omega})\right)$ such that $h_n\rightarrow e$ and $h_n\Omega_{\vare}\cap\Omega_{\vare}\neq\emptyset.$  
We play the same old game again and construct the $\n$-quasiregular map $\phi$ with respect to $h_n.$ Our measure $\mu$ is invariant under $\mbox{Im}(\phi)$ by the basic lemma. We are obviously in case (ii) of Proposition~\ref{star} so by loc.\ cit.\  $\mbox{Im}(\phi)\subset W^+(s).$ This says in any case we have $\mbox{Im}(\phi)\subset U\times\{e\}.$ The desired unboundedness of $\mbox{Im}(\phi)$ is guaranteed by Proposition~\ref{normalizer}. Then Remark~\ref{maximal} gives $\mu=\haar.$ Theorem~\ref{joining} is proved. 
 \end{proof}


\section{Linearization}\label{seclinearization}

In this section we state and prove the linearization technique for the positive characteristic case. We do this more generally than needed in this paper with the hope that it will be useful in the future.

Let  $K$ be a global field of positive characteristic and let $S$ be a finite set of places of $K.$ As usual $K_S=\prod_{\nu\in S}K_{\nu}$ and $\mathcal{O}(K)=\mathcal{O}_S$ the ring of $S$-integers. 
Let $\bbg$ be a $K$-group and let $$G=\prod_{\nu\in S}\bbg(K_{\nu})$$
Let $\Gamma$ be an arithmetic lattice in $G$ commensurable to $\bbg(\mathcal{O}_S).$ If $k$ is a local field and $A\subset k^d$ we let $|A|$ denote the Haar measure of $A.$

The following simple lemma is a consequence of the Lagrange interpolation and is an important property of polynomials over a local field. We refer to~\cite{KM},~\cite{KT}, \cite {Gh} for a discussion of polynomial like behavior. Let $\overline{k}$ denote the algebraic closure of a field $k$.

\begin{lem}\label{polygood}
Let $k$ be a local field and $p\in \overline{k}[x_1,\cdots,x_d]$
be a polynomial of degree not greater than $l$. Then there exists
$\hspace{1mm}C=C_{d, l}\hspace{1mm}$ independent of $p$, such that for any ball $B\subset k^d$ one has.
$$\Bigl|\Bigl\{x\in{B}|\hspace{1mm}
\|p(x)\|< \vare\cdot\sup_{{x}\in
{B}}\|p(x)\|\Bigr\}\Bigr|\leq C
\hspace{1mm}\vare^{\frac{1}{dl}}|{B}|.$$ 
\end{lem}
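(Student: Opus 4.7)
I plan to prove the bound by induction on the number of variables $d$, in the spirit of Kleinbock--Margulis. After dividing $p$ by $\sup_B\|p\|$ we may assume $\sup_{x\in B}\|p(x)\|=1$, so the task reduces to showing $|E_\varepsilon|\leq C_{d,l}\,\varepsilon^{1/(dl)}|B|$ where $E_\varepsilon=\{x\in B:\|p(x)\|<\varepsilon\}$.

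\emph{One-variable base case.} Factor $p(x)=c\prod_{i=1}^{m}(x-\alpha_i)$ in $\overline{k}[x]$ with $m\leq l$. If $B$ has radius $r$ around $x_0$, classify each root $\alpha_i$ as \emph{near} (if $\|x_0-\alpha_i\|\leq r$) or \emph{far}, and let $m'\leq l$ be the number of near roots. On $B$ each factor $\|x-\alpha_i\|$ is comparable (up to constants depending only on $l$) to $\max(r,\|x_0-\alpha_i\|)$, so
\[
\|p(x)\|\asymp |c|\!\!\prod_{\text{far}}\!\|x_0-\alpha_i\|\cdot\!\!\prod_{\text{near}}\!\|x-\alpha_i\|,
\]
and the normalization $\sup_B\|p\|=1$ forces $|c|\prod_{\text{far}}\|x_0-\alpha_i\|\asymp r^{-m'}$. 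Hence $\{x\in B:\|p(x)\|<\varepsilon\}$ is contained, up to an $l$-dependent constant, in a union of $m'\leq l$ balls of radius $\varepsilon^{1/m'}r$ around the near roots, giving measure at most $C_l\varepsilon^{1/l}|B|$.

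\emph{Inductive step.} Assume the lemma for $d-1$ variables and write $B=B_1\times B'$ with $B_1\subset k$ and $B'\subset k^{d-1}$ (a genuine product in the non-archimedean case, and up to bounded comparison otherwise). Set $M(x_1):=\sup_{y\in B'}\|p(x_1,y)\|$ and pick a parameter $\delta\in(0,1)$ to be chosen. On the set $\{x_1\in B_1: M(x_1)\geq\delta\}$, apply the inductive hypothesis to the slice polynomial $p(x_1,\cdot)$ to get
\[
\bigl|\{y\in B':\|p(x_1,y)\|<\varepsilon\}\bigr|\leq C_{d-1,l}\,(\varepsilon/\delta)^{1/((d-1)l)}|B'|.
\]
To control the remaining $x_1$, choose $y^*\in B'$ for which $\sup_{B_1}\|p(\cdot,y^*)\|$ is close to $\sup_B\|p\|=1$; since $x_1\mapsto p(x_1,y^*)$ is a one-variable polynomial of degree $\leq l$, the base case gives $|\{x_1\in B_1:\|p(x_1,y^*)\|<\delta\}|\leq C_l\delta^{1/l}|B_1|$, which in particular bounds $|\{x_1:M(x_1)<\delta\}|$. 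Integrating over $B_1$ via Fubini yields
\[
|E_\varepsilon|\leq C\bigl((\varepsilon/\delta)^{1/((d-1)l)}+\delta^{1/l}\bigr)|B|,
\]
and optimizing with $\delta=\varepsilon^{(d-1)/d}$ balances the two terms to give $|E_\varepsilon|\leq C_{d,l}\,\varepsilon^{1/(dl)}|B|$.

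\emph{Main obstacle.} The delicate step is the inductive reduction: one must pick $y^*\in B'$ \emph{independently of} $x_1$ so that $x_1\mapsto\|p(x_1,y^*)\|$ is a bona fide polynomial on $B_1$ with supremum still close to $1$, and simultaneously make sure that the two cases $M(x_1)\geq\delta$ and $M(x_1)<\delta$ combine so that the compounded exponent comes out to exactly $1/(dl)$ after optimizing. A secondary point is the product-structure step $B=B_1\times B'$, which is immediate in the non-archimedean setting (the one relevant to this paper, since $K$ has positive characteristic) but requires standard constant-factor adjustments in the archimedean case handled uniformly in $k$.
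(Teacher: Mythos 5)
The paper does not actually prove this lemma: it is stated as a known fact (``a consequence of the Lagrange interpolation'') and deferred to the references \cite{KM}, \cite{KT}, \cite{Gh}, where polynomials are shown to be $(C,\alpha)$-good. Your argument is essentially a reconstruction of that standard proof: a one-variable estimate via factorization over $\overline{k}$, followed by an induction on the number of variables using a Fubini/slicing argument that degrades the exponent from $1/l$ to $1/(dl)$. So the route is sound and consistent with what the paper is implicitly relying on.

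Two points need repair. First, your optimization is off for $d\geq 3$: balancing $\delta^{1/l}=(\varepsilon/\delta)^{1/((d-1)l)}$ gives $\delta^{d}=\varepsilon$, i.e.\ $\delta=\varepsilon^{1/d}$, not $\delta=\varepsilon^{(d-1)/d}$ (the two coincide only when $d=2$); with your choice the second term is $\varepsilon^{1/(d(d-1)l)}$, which is weaker than the claimed $\varepsilon^{1/(dl)}$. Second, in the base case the pointwise comparability $\|x-\alpha_i\|\asymp\max(r,\|x_0-\alpha_i\|)$ is false for near roots (the left side vanishes at $x=\alpha_i$); what you actually need is the upper bound pointwise together with the statement $\sup_{x\in B}\prod_{\mathrm{near}}\|x-\alpha_i\|\gtrsim_l r^{m'}$, which requires exhibiting a point of $B$ at distance $\gtrsim_l r$ from all $m'\leq l$ near roots. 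This follows from a measure/pigeonhole count (the union of the $m'$ sets $\{x\in k:\|x-\alpha_i\|<cr\}$ has measure less than $|B|$ for $c$ small depending on $l$ and the residue field), but it should be said, since it is exactly the step that makes $|c|\prod_{\mathrm{far}}\|x_0-\alpha_i\|\asymp r^{-m'}$ legitimate. With these two fixes the proof is complete.
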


Let us fix a few notations to be used in this section without further remarks. We let $U$ be a $K_S$-split unipotent subgroup of $G$ and let $\theta$ be the left invariant Haar measure on $U,$ which of course is right invariant as well. We let $T\subset S$ be the set of places $\omega\in S$ for which $U_{\omega}\neq \{e\}.$ Throughout this section we assume there is a polynomial parametrization  $\mathbf{u}: \prod_{\nu\in T}K_{\nu}^{d_{\nu}}\rightarrow U,$ which satisfies $\mathbf{u}(\mathbf{0})=e.$ Furthermore $\ubf_{*}\la=\theta,$ where $\lambda$ is the Lebesgue measure on $\prod_{\nu\in T}K_{\nu}^{d_{\nu}}.$ We let $\bfr$ be an open ball around the origin in $K_S^d$ fixed once and for all. We assume $U\subset W^+(s)$ for some element $s\in G$ from class $\mathcal{A}.$  

\begin{definition}
A sequence of regular maps $\la_n:U\rightarrow U$ is called a sequence of {\it admissible expanding} maps iff there exists $U_0$ such that (i) $p:U\rightarrow U_0$ is a regular isomorphism (as $K_S$-varieties) with $p(e)=e,$ and (ii) $sU_0s^{-1}=U_0$ and we have $\la_n(u)=p^{-1}(s^np(u)s^{-n}).$
\end{definition}

We make one further assumption, (this is not an essential assumption and is satisfied in most reasonable cases), assume that there exists a sequence of  {\it admissible expanding} maps, ${\la_n:U\rightarrow U}.$

The following states a quantitative non-divergence theorem for the action of unipotent groups on $G/\Gamma.$ By now the theorem has a long history. G.~A.~Margulis first proved some non-quantitative version of this which he used in the proof of arithmeticity of non-uniform lattices. The ideas developed by Margulis were applied by S.~G.~Dani to prove the first quantitative version. The study did not stop there indeed later in 90's D.~Kleinbock and G.~A.~Margulis~\cite{KM} pushed the idea further and gave more precise quantifiers which was used in theory of diophantine approximation. The $S$-arithmetic version in zero characteristic was proved in~\cite{KT} and the result in positive characteristic was obtained in~\cite{Gh}.

\begin{thm}\label{nondivergence}
Let $G$ be a $K_S$-algebraic group and let $\Gamma$ be an arithmetic lattice in $G.$ Let $U$ be the $K_S$-points of a unipotent $K_S$-split subgroup of $G.$ We assume $\mathbf{u}: \prod_{\nu\in T}K_{\nu}^{d_{\nu}}\rightarrow U$ is a polynomial diffeomorphism onto $U,$ such that $\mathbf{u}(\mathbf{0})=e.$ Let $\mathcal{K}\subset G/\Gamma$ be a compact subset and $\vare>0$, then there exists a compact set $\mathcal{L}\subset G$ such that $\mathcal{K}\subset\mathcal{L}\Gamma/\Gamma$ and for any $x\in\mathcal{K}$ and $\mathbf{B}(r)\subset\prod_{\nu\in S}K_{\nu}^{d_{\nu}},$ any ball around the origin, we have $$|\{\mathbf{t}\in\mathbf{B}(r)|\mathfrak{r}(\mathbf{t})x\in\mathcal{L}\Gamma/\Gamma\}|\geq(1-\vare)|\mathbf{B}(r)|$$
\end{thm}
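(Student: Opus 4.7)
The plan is to follow the Kleinbock-Margulis approach \cite{KM}, its $S$-arithmetic generalization by Kleinbock-Tomanov \cite{KT}, and the positive-characteristic version of Ghosh \cite{Gh}. The argument breaks into three stages: a representation-theoretic compactness criterion, a good-polynomial estimate delivered by Lemma~\ref{polygood}, and a covering/induction scheme that combines them.

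First, I would set up the compactness criterion. Using a Chevalley-type embedding together with the reduction theory for $S$-arithmetic subgroups of $\bbg(K_S)$ in positive characteristic, one produces finitely many $K$-rational representations $\rho_i : \bbg \to \mbox{GL}(\mathbb{V}_i)$ and distinguished vectors $v_i$ whose $\Gamma$-orbits $\Gamma v_i$ are discrete in $V_i = \mathbb{V}_i(K_S)$, such that a subset $\Omega \subset G/\Gamma$ is relatively compact if and only if at every $g\Gamma \in \Omega$ one has $\|\rho_i(g)(\gamma v_i)\|_S \geq \delta$ for each $i$ and $\gamma \in \Gamma$, for some threshold $\delta > 0$. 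The target $\mathcal{L}\Gamma/\Gamma$ is then the set cut out by such an inequality with threshold $\delta$; the compactness of the input $\mathcal{K}$ gives a larger threshold $\delta_0 > \delta$ at the basepoints $x \in \mathcal{K}$.

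Second, since $\ubf$ is polynomial and each $\rho_i$ is algebraic, the coordinate functions of $\tbf \mapsto \rho_i(\ubf(\tbf) g)(\gamma v_i)$ are polynomials in $\tbf$ of degree bounded by a constant $l_i$ depending only on $\rho_i$ and $\ubf$, independent of $g$ and $\gamma$. Lemma~\ref{polygood} then yields the $(C,\alpha)$-good estimate
$$
\bigl|\{\tbf \in \bfr' : \|\rho_i(\ubf(\tbf) g)(\gamma v_i)\|_S < \eta \}\bigr| \;\leq\; C \Bigl(\tfrac{\eta}{\sup_{\tbf \in \bfr'} \|\rho_i(\ubf(\tbf) g)(\gamma v_i)\|_S}\Bigr)^{\alpha} |\bfr'|
$$
for every sub-ball $\bfr' \subset \bfr(r)$, with $\alpha$ depending only on the degrees and dimensions.

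Third, one runs the Kleinbock-Margulis covering scheme. The set of $\tbf \in \bfr(r)$ for which $\ubf(\tbf) x$ leaves $\mathcal{L}\Gamma/\Gamma$ is covered by the bad sets $E_{i,\gamma} = \{\tbf : \|\rho_i(\ubf(\tbf) g)(\gamma v_i)\|_S < \delta\}$. The discreteness of $\Gamma v_i$, combined with the polynomial growth of the orbit map, bounds on each dyadic sub-ball the number of $\gamma$ for which the supremum of $\|\rho_i(\ubf(\tbf) g)(\gamma v_i)\|_S$ over that sub-ball is not substantially larger than $\delta$. An inductive covering argument on sub-balls (the $S$-arithmetic positive-characteristic version is carried out in \cite{Gh}) sums the $(C,\alpha)$-good bounds above and produces the desired estimate $|\{\tbf : \ubf(\tbf) x \notin \mathcal{L}\Gamma/\Gamma\}| \leq \vare |\bfr(r)|$, provided $\delta/\delta_0$ is chosen small enough in terms of $\vare$, $\alpha$, $C$, and the combinatorics of the covering. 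The main obstacle is the first step: producing the list of representations and integral vectors with the discreteness property for $\Gamma v_i$ in the absence of archimedean places; this requires Borel-Harish-Chandra type reduction theory for $\bbg$ over the function field, which is available in our arithmetic setup thanks to Harder's work. Once this input is secured, the polynomial-and-covering machinery transports verbatim from \cite{KM, KT, Gh}.
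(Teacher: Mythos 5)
Your outline is correct in spirit and identifies all the right ingredients, but it takes a more laborious route than the paper at the one point where a shortcut is available. The paper's entire proof is two sentences: by arithmeticity of $\Gamma$ one reduces to the case $G=\mbox{SL}_n(K_S)$, $\Gamma=\mbox{SL}_n(\mathcal{O}_S)$ (the orbit map $G/\Gamma\to \mbox{SL}_n(K_S)/\mbox{SL}_n(\mathcal{O}_S)$ is proper, so compact sets and non-divergence pull back), and then the statement is exactly \cite[Thm.~6.3]{KT} together with \cite[Thm.~4.3]{Gh}, using Lemma~\ref{polygood} and the fact that $\mathbf{0}\in\mathbf{B}(r)$ (which supplies the lower bound on the suprema at the base point $x\in\mathcal{K}$ --- your $\delta_0$). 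You instead propose to build the Mahler-type compactness criterion directly for the given $\mathbb{G}$ via finitely many representations $\rho_i$ with discrete $\Gamma$-orbits, coming from reduction theory over the function field. That can be made to work, but it is where your plan is weakest: the covering induction of \cite{KM}, \cite{KT}, \cite{Gh} is not formulated for an arbitrary finite family of representations with discrete orbits --- it is built on the partially ordered set of primitive $\mathcal{O}_S$-submodules of $\mathcal{O}_S^n$ and the submultiplicativity of the norms on exterior powers, so with your abstract setup the "machinery transports verbatim" claim in your last sentence does not literally hold; you would have to redo the combinatorics of the marked-subgroup induction for your family $\{\rho_i,v_i\}$. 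The reduction to $\mbox{SL}_n$ dissolves this issue at no cost, which is why the paper does it. Everything else in your sketch (the degree bound uniform in $g$ and $\gamma$, the $(C,\alpha)$-good estimate from Lemma~\ref{polygood}, the role of discreteness of the orbit in bounding multiplicities on sub-balls) matches the intended argument.
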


\begin{proof}
Thanks to arithmeticity of $\Gamma$ we may reduce the problem to the case where $G=\mbox{SL}_n(K_S)$ and $\Gamma=\mbox{SL}_n(\mathcal{O}_S).$ The result now follows from~\cite[Thm.~6.3]{KT}  and~\cite[Thm.~4.3]{Gh}, using Lemma~\ref{polygood} and the fact that $\mathbf{0}\in\mathbf{B}(r)$. 
\end{proof}

We have the following, which is essentially in~\cite[Prop.~4.2]{T}.

\begin{prop}\label{polylinear}
Let $\nu$ be a place of $K$ and let $M$ be a Zariski closed subset in $K_{\nu}^m.$ Then for any compact subset $A$ of $M,$ and any $\vare>0$  and integers $d,D$ there exists a compact subset $B$ in $M$ such that the following holds: given a neighbourhood $\mathcal{Q}_0$ of $B$ in $K_{\nu}^m,$ there exists a neighbourhood $\mathcal{Q}$ of $A$ in $K_{\nu}^m$ such that for any unipotent subgroup $U$ of $\mbox{GL}_m(K_{\nu}),$ with $\mathbf{u}: K_{\nu}^d\rightarrow U$ a polynomial diffeomorphism  from $K_{\nu}^d$ onto $U$ of degree $\leq D$ and with $\mathbf{u}(\mathbf{0})=e,$ any $a\in K_{\nu}^m\setminus\mathcal{Q}_0$ and any $B(r)\subset K_{\nu}^d$ a neighbourhood of the origin, we have $$|\{\tbf\in{B}(r)|\mathbf{u}(\tbf)a\in\mathcal{Q}\}|\leq\vare|\{\tbf\in B(r)|\mathbf{u}(\tbf)a\in\mathcal{Q}_0\}|$$
\end{prop}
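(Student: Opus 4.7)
The plan is to imitate the proof of~\cite[Prop.~4.2]{T}, with the $(C,\alpha)$-good property of bounded-degree polynomials on a local field (Lemma~\ref{polygood}) playing the role it plays in characteristic zero. The governing principle is that $\mathbf{t}\mapsto\mathbf{u}(\mathbf{t})a$ is a polynomial map of degree $\leq D$, so membership of $\mathbf{u}(\mathbf{t})a$ in a ball around a point of $M$ translates, coordinate by coordinate, into sub-level sets of polynomials in $\mathbf{t}$ of degree $\leq D$. Since $M$ is Zariski closed, one could also keep at hand generators $p_1,\ldots,p_k$ of the ideal $I(M)$ of degree $\leq l$ in order to tune $B$; the first instance of the argument does not need them.

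Concretely, I would take $B$ to be a fixed compact neighborhood of $A$ in $M$ (for instance, the closure in $M$ of a thickening of $A$ by radius $1$). Given $\mathcal{Q}_0\supset B$, compactness of $B$ produces $\delta_0>0$ with $\{x\in K_\nu^m:d(x,B)<\delta_0\}\subset\mathcal{Q}_0$. I cover $A$ by finitely many balls $B(y_j,\eta_0)$, $j=1,\ldots,N$, and set $\mathcal{Q}=\bigcup_j B(y_j,\eta_0)$, with $\eta_0\ll\delta_0$ to be chosen below. Since $a\notin\mathcal{Q}_0$ and $y_j\in A\subset B$, one has $\|a-y_j\|\geq\delta_0$; in the non-Archimedean max-norm this forces some coordinate $p_{i_jj}(\mathbf{t})=(\mathbf{u}(\mathbf{t})a-y_j)_{i_j}$, a polynomial of degree $\leq D$ in $\mathbf{t}$, to satisfy $|p_{i_jj}(\mathbf{0})|\geq\delta_0$, and hence $\sup_{\mathbf{B}(r)}|p_{i_jj}|\geq\delta_0$ since $\mathbf{0}\in\mathbf{B}(r)$. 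Lemma~\ref{polygood} then gives
\[
|\{\mathbf{t}\in\mathbf{B}(r):\mathbf{u}(\mathbf{t})a\in B(y_j,\eta_0)\}|\leq|\{\mathbf{t}\in\mathbf{B}(r):|p_{i_jj}(\mathbf{t})|<\eta_0\}|\leq C(\eta_0/\delta_0)^{1/(dD)}|\mathbf{B}(r)|,
\]
and summation over $j$ yields $|\{\mathbf{u}(\mathbf{t})a\in\mathcal{Q}\}|\leq NC(\eta_0/\delta_0)^{1/(dD)}|\mathbf{B}(r)|$. To replace $|\mathbf{B}(r)|$ by $|\{\mathbf{u}(\mathbf{t})a\in\mathcal{Q}_0\}|$, I would split into two cases: if the orbit never meets $\mathcal{Q}_0$, then $\mathcal{Q}\subset\mathcal{Q}_0$ makes both sides of the proposition vanish; otherwise one pairs the above upper bound with the ratio consequence of Lemma~\ref{polygood} applied to the concentric balls $B(y_j,\eta_0)\subset B(y_j,\delta_0)\subset\mathcal{Q}_0$. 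Choosing $\eta_0$ with $NC(\eta_0/\delta_0)^{1/(dD)}<\varepsilon$ closes the argument.

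The main obstacle I anticipate is this final upgrade: passing from a bound against $|\mathbf{B}(r)|$ to one against $|\{\mathbf{u}(\mathbf{t})a\in\mathcal{Q}_0\}|$ in the non-trivial regime. This requires, beyond Lemma~\ref{polygood}, a matching lower estimate on the dwell time of the orbit inside $\mathcal{Q}_0$, obtained by iterating Lemma~\ref{polygood} across concentric sub-level sets of the coordinate polynomials $p_{i_jj}$ and using that $B(y_j,\delta_0)\subset\mathcal{Q}_0$. Balancing the covering number $N\sim\eta_0^{-\dim M}$ against the gain $\eta_0^{1/(dD)}$ is harmless because the exponent is fixed and positive, and the uniformity of the constant in Lemma~\ref{polygood} over all $U$ of bounded degree ensures that one choice of $\mathcal{Q}$ works uniformly for every unipotent subgroup, as required.
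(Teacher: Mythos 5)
The paper's own proof is just a citation of \cite[Prop.~4.2]{DM} and \cite[Prop.~4.2]{T}, so your proposal has to be measured against that standard argument. You correctly identify Lemma~\ref{polygood} as the engine, but there are two genuine gaps, and the second is fatal to the strategy as written. First, the compact set $B$ cannot be ``a fixed compact neighbourhood of $A$ in $M$, e.g.\ a thickening by radius $1$'': in the Dani--Margulis/Tomanov proof $B$ is essentially $M\cap \overline{B(0,R')}$ for a radius $R'$ that must be taken \emph{large depending on $\vare$, $d$, $D$ and the degrees of defining polynomials of $M$}. Its role is not to provide the transversal thickness $\delta_0$ (that comes for free from $\mathcal{Q}_0$ being open) but to control the orbit's excursion \emph{along} $M$: one needs the ratio $R/R'$ (where $A\subset B(0,R)$) small enough that the $(C,\alpha)$-good estimate applied to the coordinate polynomials of $\tbf\mapsto\ubf(\tbf)a$ produces a factor $<\vare$. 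With $R'=R+1$ no such gain is available, and your argument in fact never uses the extent of $B$ along $M$ at all.

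Second, and more seriously, your main estimate bounds $|\{\tbf\in B(r):\ubf(\tbf)a\in\mathcal{Q}\}|$ against $|B(r)|$, and you then hope to convert this via a ``matching lower estimate on the dwell time of the orbit inside $\mathcal{Q}_0$.'' No such lower bound in terms of $|B(r)|$ can hold uniformly in $r$: already for $\ubf(\tbf)a=a+\tbf v$ the orbit spends a proportion of time of order $R'/(\|v\|\hh r)$ in any bounded set, which tends to $0$ as $r\to\infty$. The actual mechanism of \cite[Prop.~4.2]{DM} is local: decompose $\{\tbf\in B(r):\ubf(\tbf)a\in\mathcal{Q}\}$ into the maximal balls $J\subset B(r)$ on which both $\max_i|p_i(\ubf(\tbf)a)|\leq\delta$ and $\|\ubf(\tbf)a\|\leq R'$ hold (the $p_i$ being defining polynomials of $M$, and $\{x:\max_i|p_i(x)|\leq\delta,\ \|x\|\leq R'\}\subset\mathcal{Q}_0$ by compactness of $B$). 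By maximality --- and since $a\notin\mathcal{Q}_0$ rules out $J=B(r)$ --- one of these polynomials attains its threshold on the next larger ball, so Lemma~\ref{polygood} bounds the sublevel set corresponding to $\mathcal{Q}$ by $\vare|J|$; since $J\subset\{\tbf:\ubf(\tbf)a\in\mathcal{Q}_0\}$ and distinct maximal balls are disjoint (the non-Archimedean structure makes this clean), summation over the $J$'s gives the claim. Finally, your covering step is internally inconsistent: with $N\sim\eta_0^{-\dim M}$ balls of radius $\eta_0$, the product $N(\eta_0/\delta_0)^{1/(dD)}$ diverges as $\eta_0\to 0$ once $\dim M\geq 1$, so the claimed balancing does not close; the standard proof avoids coverings altogether by working with the defining polynomials of $M$ directly.
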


\begin{proof}
The proof is identical to that of \cite[Prop.~4.2]{T} or \cite[Prop.~4.2]{DM} using Lemma~\ref{polygood}. (In \cite{DM} the variety could be defined by a single real polynomial by taking the sum of the squares, but this is not essential as the proof there shows.)
\end{proof}

We now move to the proof of linearization. Let us, following~\cite{DM} and ~\cite{T} fix the following notations and definitions. Define 
$$\mathcal{F}= \left\{\mathbb{F}\h\lvert \h\begin{array}{c}\mathbb{F} \h\mbox{is a connected}\h K\mbox{-closed subgroup of}\h\mathbb{G}\h,\\ \mathbb{F}\h\mbox{has no rational characters}\end{array}\right\}$$
We will refer to these subgroups as subgroups from class $\mathcal{F}$. For a proper subgroup $\mathbb{F}$ of class $\mathcal{F}$ we let $F=\bbp(K_{S}).$ Let $U$ be a split unipotent subgroup of $G$ as before  and let
$$X(F,U)=\{g\in G\h|\h Ug\subset gF\}$$
As it is clear from the definition $X(F,U)$ is a $K_{S}$-closed subset of $G.$ We let 
$$\mathcal{S}(U)=\bigcup_{\mathbb{F}\in\mathcal{F}, \mathbb{F}\neq\mathbb{G}}X(F,U)/\Gamma\hspace{2mm}\mbox{and}\hspace{2mm}\mathcal{G}(U)=G/\Gamma\setminus\mathcal{S}(U)$$
Following~\cite{DM} points in $\mathcal{S}(U)$ are called singular and points in $\mathcal{G}(U)$ are called generic points with respect to $U.$ Note that these are a priori different from measure theoretic generic points, however any measure theoretic generic point is generic in this new sense as well. 

The following is the main result of this section. This was proved in~\cite{DM} in the real case for one parameter subgroups.  Later it was obtained in the $S$-arithmetic setting see~\cite{MT,T}. For an account of this for the case not dealing only with one parameter groups we refer to~\cite{Sh} and also~\cite{EMS}. 

\begin{thm}\label{linearization}
Let $G,\h\Gamma$ be as in the statement of Theorem~\ref{nondivergence}. Let $\vare>0$ and let $\mathcal{K}\subset X=G\backslash \Gamma$ be a compact subset. Furthermore, let $\mathbb{F}$ be a subgroup from $\mathbb{G}$ of class $\mathcal{F}$ and assume $C=\prod_{\nu\in S}C_{\nu}\subset\prod_{\nu\in S}\mathbb{G}(K_{\nu})$ is a 
compact subset of $X(F,U).$ Then we 
can find a maybe larger compact subset $D=\prod_{\nu\in S}D_{\nu}\supset\prod_{\nu\in S}C_{\nu}$ of
$X(F,U)$ with $D_{\omega}=C_{\omega}$ for  $\omega\in S\setminus T$ such that the following holds; 
For any neighbourhood $\Phi$ of $D$ in $G$ there exists a neighbourhood $\Psi$ of $C$ such that if $x\in\mathcal{K}\setminus(\Phi\Gamma/\Gamma)$ then for any $n\in\bbn$
$$\frac{1}{\theta(\bfr)}\theta\Bigl(\Bigl\{\tbf\in\bfr|\h\la_n(\ubf(\tbf))x\in\Psi\Gamma/\Gamma\Bigr\}\Bigr)<\vare$$
\end{thm}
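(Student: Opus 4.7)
The plan is to follow the classical linearization strategy of Dani--Margulis \cite{DM}, adapted to the $S$-arithmetic positive-characteristic setting along the lines of \cite{MT, T}, with the non-divergence Theorem~\ref{nondivergence} and the polynomial closeness estimate Proposition~\ref{polylinear} as the principal analytic inputs. The first step is Chevalley's linearization: since $\mathbb{F}$ is $K$-closed, connected, and has no rational characters, one obtains a $K$-rational representation $\rho:\mathbb{G}\to\mathrm{GL}(\mathbb{V})$ and a vector $p_F\in\mathbb{V}(K)$ with $\mathbb{F}=\operatorname{Stab}_{\mathbb{G}}(p_F)$. Setting $\eta(g)=\rho(g)p_F$, the condition $g\in X(F,U)$ becomes $\eta(g)\in V^U$, the $U$-fixed subspace of $V=\mathbb{V}(K_S)$. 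The character-freeness of $\mathbb{F}$ combined with the arithmeticity of $\Gamma$ ensures that $\rho(\Gamma)p_F$ is a discrete subset of $V$. Applying Proposition~\ref{polylinear} placewise to $M_\nu=\overline{\rho(\mathbb{G})p_F}\cap V^{U_\nu}$ in $\mathbb{V}(K_\nu)$, with $A_\nu$ the $\nu$-component of $\eta(C)$, produces compact sets $B_\nu\supset A_\nu$; setting $B_\omega=A_\omega$ for $\omega\in S\setminus T$ and $B=\prod_\nu B_\nu$, one defines $D$ as a compact product-form enlargement of $\eta^{-1}(B)\cap X(F,U)$ arranged so that $D_\omega=C_\omega$ for $\omega\in S\setminus T$.

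Given a neighborhood $\Phi$ of $D$ in $G$, use continuity of $\eta$ and the discreteness of $\rho(\Gamma)p_F$ to select a neighborhood $\mathcal{Q}_0$ of $B$ in $V$ such that, on any fixed compact of interest, $\eta^{-1}(\mathcal{Q}_0)$ is contained in $\Phi$ modulo $\Gamma$-translates. Proposition~\ref{polylinear} then furnishes a neighborhood $\mathcal{Q}\subset\mathcal{Q}_0$ of $A$; let $\Psi$ be any neighborhood of $C$ in $G$ with $\eta(\Psi)\subset\mathcal{Q}$. For $x=g\Gamma\in\mathcal{K}\setminus(\Phi\Gamma/\Gamma)$ and each $\gamma\in\Gamma$ consider the polynomial trajectory
\[
\mathbf{t}\mapsto\eta(\lambda_n(\mathbf{u}(\mathbf{t}))g\gamma)=\rho(\lambda_n(\mathbf{u}(\mathbf{t})))\rho(g\gamma)p_F.
\]
Conjugation by $s^n$ via the regular isomorphism $p:U\to U_0$ makes this a polynomial in $\mathbf{t}$ of uniformly bounded degree. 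Theorem~\ref{nondivergence} confines attention, up to measure loss at most $\varepsilon/2$, to those $\mathbf{t}$ whose orbit stays in a fixed compact, while the discreteness of $\rho(\Gamma)p_F$ bounds uniformly the number of $\gamma$'s whose trajectory enters $\mathcal{Q}_0$. For each such relevant $\gamma$, the hypothesis $g\notin\Phi\Gamma$ forces $\rho(g\gamma)p_F\notin\mathcal{Q}_0$, so Proposition~\ref{polylinear} bounds the proportion of $\mathbf{t}\in\mathfrak{B}$ with $\eta(\lambda_n(\mathbf{u}(\mathbf{t}))g\gamma)\in\mathcal{Q}$ by a small fraction of $\varepsilon$. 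Summing over the finitely many relevant $\gamma$'s yields the required bound.

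The principal difficulty lies in the technical calibration of the three neighborhoods---of $C$ in $G$, of $A$ and $B$ in $V$, and of $D$ in $G$---so that the chain of inclusions and the quantifier order ``$\forall\Phi\,\exists\Psi$'' cohere under $\eta$ and respect the $\Gamma$-action. The absence of rational characters on $\mathbb{F}$ is indispensable: it guarantees the discreteness of $\rho(\Gamma)p_F$, reducing an a priori infinite sum over $\gamma$ to a finite one. The requirement $D_\omega=C_\omega$ for $\omega\in S\setminus T$ is automatic, since the polynomial estimate is invoked only at places where $U_\omega$ is nontrivial and no expansion occurs elsewhere; positive characteristic poses no additional obstacle beyond this bookkeeping, as the essential analytic ingredients have already been established for function fields.
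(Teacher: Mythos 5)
Your overall framework (Chevalley linearization, non-divergence to stay in a compact set, Proposition~\ref{polylinear} to control the time spent near the variety) is the right one, but the proposal has a genuine gap at its quantitative heart. You claim that ``the discreteness of $\rho(\Gamma)p_F$ bounds uniformly the number of $\gamma$'s whose trajectory enters $\mathcal{Q}_0$'' and then sum over ``the finitely many relevant $\gamma$'s,'' each contributing a small fraction of $\vare$. This fails: the bound must be uniform in $n$, and as $n\to\infty$ the expanded trajectory $\tbf\mapsto\rho(\la_n(\ubf(\tbf))g\gamma)w_0$ sweeps out ever larger pieces of the orbit, so the number of cosets $\gamma\Gamma_F$ whose trajectory meets a fixed bounded set $\mathcal{Q}_0$ is unbounded. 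Discreteness of $\rho(\Gamma)w_0$ gives finiteness for each fixed $n$ but no uniform count, and without a uniform count the sum of the individual Proposition~\ref{polylinear} estimates does not stay below $\vare$. The paper's proof avoids counting the $\gamma$'s altogether: for each $q=\rho(g\gamma)w_0$ it builds a set $\bfr_q\subset\bfr$ out of maximal balls on which the trajectory stays in $\mathcal{Q}_0$, applies Proposition~\ref{polylinear} on each such maximal ball to get $\theta(\bfr^{(2)}\cap\bfr_q)\leq c\vare\,\theta(\bfr_q)$, and then bounds the \emph{multiplicity} of the cover $\{\bfr_q\}$ by the number $\kappa$ of roots of unity in $K$, so that $\sum_q\theta(\bfr_q)\leq\kappa\,\theta(\bfr)$.

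That multiplicity bound is exactly where the second missing ingredient enters: the analysis of $(F,\Gamma)$-self-intersection points and the induction on $\dim\mathbb{F}$, neither of which appears in your proposal. If $\bfr_q\cap\bfr_{q'}\neq\emptyset$ one finds a time $\tbf_0$ at which both $\la_n(\ubf(\tbf_0))g\gamma$ and $\la_n(\ubf(\tbf_0))g\gamma'$ lie in a neighborhood $\Omega$ of (part of) $D$; to conclude $\gamma'\in\gamma\Gamma_F$ (hence $q'=\zeta q$ for a root of unity $\zeta$, using the character $\chi$ on $N_{\mathbb{G}}(\mathbb{F})$) one needs $\Omega$ to be free of self-intersections. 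Since the self-intersection locus of $X(F,U)$ is generally nonempty, the paper first shows it is contained in finitely many $X(F_i',U)$ with $\dim\mathbb{F}_i'<\dim\mathbb{F}$ (Proposition~\ref{self-int1}), invokes the theorem inductively for these to excise neighborhoods $\Psi_i$ of small measure, and only then chooses $\Omega$ via Proposition~\ref{self-intersection}. A secondary point: in positive characteristic one cannot simply take $\mathbb{F}=\operatorname{Stab}_{\mathbb{G}}(p_F)$ for a $K$-rational vector, since a $K$-closed $\mathbb{F}$ need only be defined over a purely inseparable extension; the paper's Lemma~\ref{representation} takes Frobenius powers of the defining equations and realizes $N_{\mathbb{G}}(\mathbb{F})$ as the stabilizer of a line, with the attendant character $\chi$ whose values on $\Gamma_F$ are roots of unity --- a fact your argument would also need but never establishes.
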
 

Let us note that this theorem is the reason why a classification of $U$-invariant measures (Dani's measure conjecture) implies a classification of $U$-orbit closures (Raghunathan's conjecture). More precisely, if it is known that a $U$-invariant and ergodic measure must be the Haar measure of an orbit of the form $gF\Gamma/ \Gamma$ for some $g\in X(F,U)$ and $\mathbb{F}\in\mathcal{F}$, then by Theorem \ref{linearization} any $x\in\mathcal{G}(U)$ is measure-theoretic generic for the Haar measure $m_X$ on $X=G/\Gamma$ and so has dense orbit. (While those points $x\in \mathcal{S}(U)$ are generic for a smaller dimensional Haar measure). Of course, in zero characteristic the measure classification is known due to \cite{Rat5, R3} resp.\ \cite{MT}.

The rest of this section is devoted to the proof of Theorem~\ref{linearization}, the proof will follow the argument in~\cite{DM}, as it did in~\cite{T}, after setting up everything correctly there are not many difficulties arising in the body of the proof. We will however present the proof for the sake of completeness.

We may, as we will, reduce to the case $T=\{\nu\}$ is a singleton. This is a simple induction argument, for details see~\cite[Lemma 4.3]{T}.
After this, we may do one more reduction which will be helpful: As we are in positive characteristic there is an open and compact subgroup $M\subset\prod_{\omega\in S\setminus\{\nu\}}G_\omega$. As $G_\nu\times M\subset G$ is open and $\Gamma$ is a lattice, we have that $G/\Gamma$ is a finite union of $G_\nu\times M$-orbits $G_\nu\times M (e,h_i)\Gamma$ with $i=1,\ldots,\ell$. Each of these orbits is $U$-invariant and the statement of the Theorem \ref{linearization} for $G/\Gamma$ is equivalent to the statement for each of the $G_\nu\times M$-orbits. Therefore, we may assume that $G'=G_\nu\times M$ and that $\Gamma'\subset G'$ is such that $\pi_\nu(\Gamma')$ is commensurable with $\pi_\nu(\{\gamma\in\Gamma: \pi_{S\setminus\{\nu\}}(\gamma)\in M\})$. However, we may moreover assume that the set $C$ is a product $C_\nu\times M$ in which case the proposition is equivalent to its statement for $C_\nu$ and the quotient  $G_\nu/\pi_\nu(\Gamma')\simeq M\backslash G'/\Gamma'$. Here $\pi_\nu(\Gamma)$ is commensurable with $\mathbb{G}(\mathcal{O}_\nu)$ --- we note that for every place $\nu$ there is indeed a corresponding subring $\mathcal{O}_{\{\nu\}} \subset K$. In other words, the case of $S=\{\nu\}$ implies the general case. Hence we may and will assume $S=\{\nu\}$ and don't have to write the subscripts in the remainder of the section.

Let $N_{\bbg}(\bbp)$ be the normalizer of $\bbp$ in $\bbg.$ This is a $K$-closed subset of $\bbg$. We will construct a finite $K$-rational representation $\rho$ of $\bbg$ on a $K$-vector space $W$ and a vector $w_0\in W$ such that $N_\bbg(\bbp)=\{g\in \bbg: \rho(g)w_0\in \overline{K}w_0\}$ just as in Chevalley's theorem. However, as we wish to obtain additional information we record this in the following. 

\begin{lem}\label{representation}
There is a representation $\rho:\bbg\rightarrow\bbg\mathbb{L}(\mathbb{W})$ which is rational over $K,$ a $K$-rational character $\chi$ defined on $N_\bbg(\bbp)$, and  a vector $w_0\in W=\mathbb{W}(K)$ such that:
\begin{itemize}
\item[(i)] $N_{\bbg}(\bbp)=\{g\in \bbg: \rho(g)w_0\in \overline{K}w_0\}=\{g\in \bbg: \rho(g)w_0=\chi(g)w_0\}$
\item[(ii)] The orbit $\rho(\Gamma)w_0$ is discrete and closed.
\item[(iii)] Let $\eta:\bbg\to W$ be the orbit map i.e. $\eta(g)=\rho(g)w_0$ for $g\in\bbg$. Then 
\begin{equation}\label{eta2}
\eta^{-1}(\overline{\eta(X(F, U))})=X(F, U)
\end{equation}
where $\overline{\eta(X(F, U))}$ is the Zariski closure of $X(F, U)$ in $W$.
\end{itemize}
\end{lem}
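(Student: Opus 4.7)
The plan is to invoke Chevalley's theorem over $K$ applied to the subgroup $N_{\bbg}(\bbp)$, and then verify the additional discreteness, closedness, and linearization properties by combining arithmeticity of $\Gamma$ with the right-invariance of $X(F,U)$ under $N_{\bbg}(\bbp)$.

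For part (i), I would construct $\rho$ concretely by taking $\mathbb{W}=\bigwedge^{d}\mathfrak{g}$ with $d=\dim\bbp$ and $\bbg$ acting via $\bigwedge^{d}\operatorname{Ad}$, and setting $w_0=e_1\wedge\cdots\wedge e_d$ for a $K$-basis of $\mathfrak{p}=\operatorname{Lie}(\bbp)$. Since $\bbp$ is connected and $K$-closed, $N_{\bbg}(\bbp)$ coincides with $N_{\bbg}(\mathfrak{p})$, which is exactly the stabilizer of the line $\overline{K}w_0$; the character is $\chi(g)=\det(\operatorname{Ad}(g)|_{\mathfrak{p}})$. The hypothesis $\bbp\in\mathcal{F}$ (no $K$-rational character) forces $\chi|_{\bbp}\equiv 1$, so $\bbp\subset\operatorname{Stab}_{\bbg}(w_0)\subset N_{\bbg}(\bbp)$.

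For part (ii), after passing to a finite-index subgroup so that $\Gamma\subset\bbg(\mathcal{O}_S)$, the $K$-rationality of $\rho$ yields a basis of $W$ in which $\rho(\Gamma)$ preserves an $\mathcal{O}_S$-lattice $\Lambda\subset W$ containing $w_0$, whence $\rho(\Gamma)w_0\subset\Lambda$ is discrete in $W\otimes_K K_S$ by discreteness of $\mathcal{O}_S$ in $K_S$. For closedness, the no-character hypothesis on $\bbp$ is exactly the condition that guarantees (via the positive-characteristic $S$-arithmetic analogue of the Borel--Harish-Chandra closed-orbit theorem) closedness of $\bbp(K_S)\Gamma$ in $\bbg(K_S)$; transferring this along $\eta$, and using that the orbit $\rho(\bbg(K_S))w_0$ is locally closed in $W\otimes_K K_S$, yields closedness of $\rho(\Gamma)w_0$.

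For part (iii), the key observation is that $X(F,U)=\{g\in\bbg:g^{-1}Ug\subset\bbp\}$ is a Zariski-closed subvariety of $\bbg$ right-invariant under $N_{\bbg}(\bbp)$: if $g\in X(F,U)$ and $n\in N_{\bbg}(\bbp)$ then $(gn)^{-1}U(gn)=n^{-1}(g^{-1}Ug)n\subset n^{-1}\bbp n=\bbp$. Since $\operatorname{Stab}_{\bbg}(w_0)\subset N_{\bbg}(\bbp)$, $X(F,U)$ is in particular right-invariant under $\operatorname{Stab}_{\bbg}(w_0)$; realizing $\bbg/\operatorname{Stab}_{\bbg}(w_0)$ as the locally closed orbit $\rho(\bbg)w_0\subset W$, the image $\eta(X(F,U))$ identifies with $X(F,U)/\operatorname{Stab}_{\bbg}(w_0)$ and is therefore Zariski closed in $\rho(\bbg)w_0$. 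Hence $\overline{\eta(X(F,U))}\cap\rho(\bbg)w_0=\eta(X(F,U))$, and pulling back through $\eta$ gives $\eta^{-1}(\overline{\eta(X(F,U))})=X(F,U)\cdot\operatorname{Stab}_{\bbg}(w_0)=X(F,U)$. I expect the main obstacle to be the closedness assertion in (ii), since the standard references for the Borel--Harish-Chandra closed-orbit theorem are over $\mathbb{R}$ and one needs the $S$-arithmetic, function-field version; parts (i) and (iii), once the right-$N_{\bbg}(\bbp)$-invariance of $X(F,U)$ is noticed, are essentially formal consequences of Chevalley's theorem.
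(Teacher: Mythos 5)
There is a genuine gap, and it sits exactly where the lemma has content. Your construction of $\rho$ as $\bigwedge^{d}\operatorname{Ad}$ on $\bigwedge^{d}\mathfrak{g}$ with $w_0$ the wedge of a $K$-basis of $\operatorname{Lie}(\bbp)$ is the standard characteristic-zero Chevalley argument, and it breaks in the positive-characteristic setting of this paper for two reasons. First, the identity $N_{\bbg}(\bbp)=N_{\bbg}(\operatorname{Lie}(\bbp))$, which you assert from connectedness of $\bbp$, is a characteristic-zero fact: in characteristic $p$ a connected smooth subgroup is not determined by its Lie algebra (already for $\operatorname{SL}_2$ in characteristic $2$ the Lie algebra of the diagonal torus is central in $\mathfrak{sl}_2$, so its normalizer is all of the group), and hence the stabilizer of the line $\overline{K}w_0$ can be strictly larger than $N_{\bbg}(\bbp)$. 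Second, and more basically, $\bbp$ is only assumed $K$-closed; in positive characteristic this does not imply that $\bbp$ or its Lie algebra is defined over $K$ --- it may only be defined over a purely inseparable extension $E/K$ --- so there need not exist a $K$-basis of $\operatorname{Lie}(\bbp)$, and your $w_0$ need not lie in $\mathbb{W}(K)$, which is what parts (i) and (ii) require. The paper's proof is built around precisely these two points: it works with the ideal of $\bbp$ in the coordinate ring rather than with the Lie algebra, takes $E$-rational generators $h_i$ of that ideal and replaces them by $f_i=h_i^{q}$ for a power $q$ of Frobenius carrying $E$ into $K$, and then sets $W=\bigwedge^{\dim I}V$ for a suitable finite-dimensional conjugation-stable $K$-subspace $V\subset K[\bbg]$ and $I=V\cap\mathcal{J}$, with $w_0$ a wedge of a $K$-basis of $I$. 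That Frobenius twist is the essential idea your proposal is missing, and without it neither the $K$-rationality of $w_0$ and $\chi$ nor the equality in (i) is established.

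On the smaller points: part (ii) is easier than you fear --- once $w_0$ is $K$-rational, arithmeticity places $\rho(\Gamma)w_0$ inside a discrete $\mathcal{O}_S$-submodule of $W$, so discreteness and closedness are immediate and no closed-orbit theorem is needed; your appeal to a Borel--Harish-Chandra type statement addresses the wrong orbit ($\rho(\Gamma)w_0$ rather than an orbit of an algebraic subgroup). For part (iii) your right-invariance argument is reasonable in outline, and the paper's own route is more direct (for each $u\in U$ the condition $g^{-1}ug\in\bbp$ is a polynomial condition on the coordinates of $\rho(g)w_0$, so $X(F,U)=\eta^{-1}(\mathbb{Y})$ for a $K$-closed $\mathbb{Y}\subset W$, which gives the identity in (iii) at once); but either version presupposes the corrected construction of $\rho$ and $w_0$.
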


\begin{proof}
Note that as $\bbg$ is defined over $K$ the right and left regular representation of $\bbg$ on $\overline{K}[\bbg]$ is defined over $K$. Let us first recall that as $\bbp$ is $K$-closed there exists a purely inseparable extension $E$ of $K$ such that $\bbp$ is defined over $E$ (see for example in~\cite{B1}).
Now $\bbp$ is defined over $E$ so we can find $\{h_1,\cdots,h_k\}$ which are $E$-rational generators of the ideal $\mathcal{J}_0$ defining $\bbp.$ Let $q$ be a power of the Frobenius map which sends $E$ to $K.$ For all $1\leq i\leq k$ we let $f_i=h_i^q$. Now let $\mathcal{J}=\mathcal{I}\cdot\overline{K}[\bbg]$ where $\mathcal{I}$ is the ideal generated by $\{f_i\}$'s in $K[\bbg].$ Also let us define the representation $\rho'(g)$ on $\overline{K}[G]$ by $(\rho'(g)h)(x)=h(g^{-1}xg)$. 

Let $\alpha:E[\bbg]\rightarrow E[\bbg]\otimes E[\bbg]$ be the co-morphism and let $\alpha(h_i)=\sum_j \beta_j\otimes\gamma_j$ and $\alpha(\gamma_j)= \sum_\ell \theta_{j\ell}\otimes\eta_{j\ell}.$ We have $\rho'(g)h_i(x)=h_i(g^{-1}xg)=\sum_j\sum_\ell\beta_j^q(g^{-1})\eta^q_{j\ell}(g)\theta^q_{j\ell}(x).$ So there is a finite dimensional subspace of $K[\bbg]$ containing all of $\rho'(g)f_i$'s. Let $V$ be the intersection of all these dimensional subspaces. We define the $K$-rational representation $\rho_V(g)f=\rho'(g)f$ for $f\in V$. 

Let $I=V\cap\mathcal{J}.$ Note that $I$ has a $K$-basis of functions in $K[\bbg]$ consisting of elements that are $q$-th power of functions in $E[\bbg].$ We define the linear space $W=\bigwedge^{\dim I}V$ as the wedge power of $V$ with the power equal to the dimension of $I$, and define $w_0$ as the wedge product of a basis of $I$. The representation $\rho$ of $\bbg$ on $W$ is the one induced by $\rho_V$ on $V$.
With these definitions we claim that $N_\bbg(\bbp)=\{g\in\bbg: \rho(w_0)\in\overline{K_\nu}w_0\}=\{g\in \bbg: \rho_V(g)I=I\},$ which will establish (i) above. Let $g\in\bbg$ be such that $\rho_V(g)I=I$ however the zero set of $I$ and $\mathcal{J}_0$ are the same hence the action of $g$ preserves the zero set of $\mathcal{J}_0$ which is $\bbp$ hence $g\in N_\bbg(\bbp).$ For the other direction let $f\in I\subset\mathcal{J}$ then it is clear from our definition of $\rho_V$ that $\rho_V(g)f=\sum_ic_i\ell^q_i$ where $\ell^q\in I$ and $c_i$ are constants. This establishes (i) above. Note that as $\Gamma$ is arithmetic and $w_0$ is $K$-rational (ii) is immediate.

Now let $\eta:\bbg\to W$ be the orbit map $\eta(g)=\rho(g)w_0$ for $g\in\bbg$. Finally let $\mathbb{X}=\{g\in\bbg|\h Ug\subset g\bbp\}.$ This is a $K_S$-closed subset of $\bbg$ and we have $\mathbb{X}(K_S)=X(F,U).$ We want to establish (iii) above. To see this, notice first that $g^{-1}ug\in\bbp$ iff the kernel of the evaluation map at $g^{-1}ug$ contains $I$ which in turn happens iff the kernel of the evaluation map at $u$ contains $\rho_V(g)(I)$. This condition is actually a polynomial relation for the coefficients of $\rho(g)w_0$. Therefore, for every $u\in U$ there exists a polynomial $p$ on $W$ such that $g^{-1}ug\in\bbp$ iff $p$ vanishes on $\rho(g)w_0$. Varying $u\in U$ we get that there is a $K$-closed variety $\mathbb{Y}\subset W$ such that $g^{-1}Ug\subset\bbp$ iff $\eta(g)\in\mathbb{Y}$. This proves the claim.
\end{proof}

Furthermore, we define the subgroup $\bbl=\{g\in\bbg: \rho(w_0)=w_0\}$ of $N_\bbg(\bbp)$,  $L=\bbl(K_S)$,  $\Gamma_L=\Gamma\cap L$, and $\Gamma_F=\Gamma\cap N_G(F).$

We now prove the following simple but important proposition.  

\begin{prop}\label{proper}
The following map is a proper map; 
$$\vartheta:G/\Gamma_L\rightarrow G/\Gamma\times W\hspace{2mm}\mbox{\rm{where}}\hspace{2mm}\vartheta(g\Gamma_L)=(g\Gamma,\rho(g)w_0).$$ 
\end{prop}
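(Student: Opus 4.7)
The plan is to prove properness by a sequential argument: given any sequence $(g_n\Gamma_L)$ in $G/\Gamma_L$ with $\vartheta(g_n\Gamma_L)\to(g\Gamma,w)$ in $G/\Gamma\times W$, we want to extract a subsequence that converges in $G/\Gamma_L$. The decisive input will be Lemma~\ref{representation}(ii), which says that $\rho(\Gamma)w_0$ is discrete and closed in $W$.

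First, from $g_n\Gamma\to g\Gamma$ I would pick $\gamma_n\in\Gamma$ such that $\tilde g_n:=g_n\gamma_n\to g$ in $G$. Combining this with the hypothesis $\rho(g_n)w_0\to w$, the simple identity
$$\rho(\gamma_n^{-1})w_0=\rho(\tilde g_n^{-1})\rho(g_n)w_0$$
together with continuity of $\rho$ gives $\rho(\gamma_n^{-1})w_0\to\rho(g^{-1})w$ in $W$. This is the only place where both pieces of the hypothesis on $\vartheta(g_n\Gamma_L)$ are used jointly, and it is the step that translates convergence in $G/\Gamma\times W$ into convergence inside the closed discrete orbit $\rho(\Gamma)w_0$.

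Since $\rho(\gamma_n^{-1})w_0$ lies in the discrete closed set $\rho(\Gamma)w_0$ and converges, it must be eventually constant along a subsequence; after relabelling, assume $\rho(\gamma_n^{-1})w_0=\rho(\gamma_1^{-1})w_0$ for every $n$. Applying $\rho(\gamma_1)$ yields $\rho(\gamma_n\gamma_1^{-1})w_0=w_0$, so $\gamma_n\gamma_1^{-1}\in L\cap\Gamma=\Gamma_L$. Hence the element
$$g_n(\gamma_n\gamma_1^{-1})=\tilde g_n\gamma_1^{-1}$$
represents the same coset $g_n\Gamma_L$ in $G/\Gamma_L$ and converges in $G$ to $g\gamma_1^{-1}$. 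Therefore $g_n\Gamma_L\to g\gamma_1^{-1}\Gamma_L$ in $G/\Gamma_L$, which, applied to any sequence in $\vartheta^{-1}(K)$ for a compact $K\subset G/\Gamma\times W$, proves that $\vartheta^{-1}(K)$ is (sequentially) compact.

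The subtle step is the second paragraph: one has to resist the temptation to just fix representatives making $g_n\to g$, because those representatives need not lie in the same $\Gamma_L$-coset as the original $g_n$. The trick is precisely to exploit the identity above to pin $\gamma_n$ inside a single $\Gamma_L$-coset of $\Gamma$ via the discreteness of $\rho(\Gamma)w_0$, so that the modification can be absorbed into $\Gamma_L$ without changing the point in $G/\Gamma_L$. Everything else is formal.
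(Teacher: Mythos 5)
Your argument is correct and is essentially the same as the paper's: both write the convergence in $G/\Gamma$ as $g_n=x_n\gamma_n$ with $x_n\to g$, transfer the convergence of $\rho(g_n)w_0$ to convergence of $\rho(\gamma_n^{\pm1})w_0$ inside the discrete closed orbit $\rho(\Gamma)w_0$, conclude eventual constancy, and absorb the resulting correction into $\Gamma_L$. (Only cosmetic remark: applying $\rho(\gamma_1)$ on the left gives $\rho(\gamma_1\gamma_n^{-1})w_0=w_0$; since $L$ is a group this still yields $\gamma_n\gamma_1^{-1}\in\Gamma_L$, so nothing is affected.)
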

\begin{proof}
The proof is straightforward. Let $\{g_n\Gamma_L\}$ be a sequence in $G/\Gamma_L$ such that $\vartheta(g_n\Gamma_L)$ converges, we need to show that $\{g_n\Gamma_L\}$ converges as well. From the definition and our assumption we have $(g_n\Gamma,\rho(g_n)w_0)\rightarrow(g\Gamma,v)\in G/\Gamma\times W$. This says we can write $g_n=x_n\gamma_n$ where $\gamma_n\in\Gamma$ and $x_n\rightarrow g.$ Now recall that $\rho(g_n)w_0=\rho(x_n\gamma_n)w_0\rightarrow v$ so we have $\{\rho(\gamma_n)w_0\}$ converges to $\rho(g^{-1})v.$ However arithmeticity gives that $\Gamma w_0$ is discrete in $W$ so for large enough $n$ one has $\rho(\gamma_n)w_0=\rho(\gamma) w_0.$ By definition of $\Gamma_L$ this gives $g_n\Gamma_L\rightarrow g\gamma\Gamma_L,$ as we wanted to show. 
\end{proof}



Let $A$ be a subset of $G$, a point $x\in A$ is called a {\it point of} $(F,\Gamma)$-{\it self-intersection} for $A$ if there exists $\gamma\in\Gamma\setminus\Gamma_F$ such that $x\gamma\in A.$ This notion was first used in~\cite{DM} also it was used in the same way in~\cite{T}. Note that for a compact set $A$ the set of $(F,\Gamma)$-self-intersections is closed.  The following are versions of \cite[Prop.~3.3]{DM} resp.\ \cite[Cor.~3.5]{DM} and \cite[Prop.~4.9]{T}.

\begin{prop}\label{self-int1}
 For any $\mathbb{F}\in\mathcal{F}$ we have that the self-intersections of $X(F,U)$ are contained in the union of $X(F',U)$ for subgroups $\mathbb{F}'\in\mathcal{F}$ of smaller dimension. In fact, for every $g,g\gamma\in X(F,U)$ with $\gamma\notin\Gamma_F$ the subgroup
 $\mathbb{F}'\in\mathcal{F}$ with $g\in X(F',U)$ can be chosen to depend only on $\mathbb{F}$ and $\gamma$.
\end{prop}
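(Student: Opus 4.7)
The plan is to produce a single subgroup $\mathbb{F}' \in \mathcal{F}$ of strictly smaller dimension than $\mathbb{F}$, depending only on $\mathbb{F}$ and $\gamma$, such that $g \in X(F',U)$ whenever $g, g\gamma \in X(F,U)$ and $\gamma \notin \Gamma_F$. The starting observation is that such a $g$ satisfies $g^{-1}Ug \subset F$ and, applying the same relation to $g\gamma$, also $\gamma^{-1}(g^{-1}Ug)\gamma \subset F$. Consequently $g^{-1}Ug \subset (\mathbb{F} \cap \gamma\mathbb{F}\gamma^{-1})(K_S)$. I would set $\mathbb{H} = \mathbb{F} \cap \gamma\mathbb{F}\gamma^{-1}$, which is a $K$-closed subgroup of $\mathbb{G}$ since $\gamma \in \Gamma$ is $K$-rational.

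First I would verify the dimension drop $\dim \mathbb{H} < \dim \mathbb{F}$. If instead $\gamma \in N_\mathbb{G}(\mathbb{F})$, then on $K_S$-points we would have $\gamma F \gamma^{-1} = F$, and so $\gamma \in N_G(F) \cap \Gamma = \Gamma_F$, contradicting the hypothesis. Hence $\gamma \mathbb{F} \gamma^{-1} \neq \mathbb{F}$, and since $\mathbb{F}$ is connected, any proper $K$-closed subvariety of it --- in particular $\mathbb{H}$ --- has strictly smaller dimension.

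Next I would construct $\mathbb{F}'$ from $\mathbb{H}^{\circ}$. I would take $\mathbb{F}' \subset \mathbb{H}^{\circ}$ to be the Zariski closure in $\mathbb{H}^{\circ}$ of the subgroup generated by all unipotent elements of $\mathbb{H}^{\circ}(\overline{K})$. This construction is Galois-invariant, so $\mathbb{F}'$ is $K$-closed; it is connected because the generating set contains the identity and lies in the closed unipotent variety; and any $K$-rational character is trivial on unipotent elements and thus on the whole Zariski closure, so $\mathbb{F}'$ has no $K$-rational characters. Therefore $\mathbb{F}' \in \mathcal{F}$. Every element of $g^{-1}Ug$ is a $K_S$-unipotent element of $\mathbb{H}^{\circ}(K_S)$, hence lies in $\mathbb{F}'(K_S) = F'$, giving $g \in X(F',U)$. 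Moreover $\dim \mathbb{F}' \leq \dim \mathbb{H} < \dim \mathbb{F}$, and $\mathbb{F}'$ depends only on $\mathbb{F}$ and $\gamma$ through $\mathbb{H}$, as required.

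The main subtlety I anticipate is the careful verification that $\mathbb{F}'$ really lies in $\mathcal{F}$ in the positive-characteristic $S$-arithmetic setting: one must check that the Galois-descent step produces a $K$-closed group (using that the unipotent variety is $K$-defined via a polynomial identity $(u - \mathrm{id})^N = 0$ in a faithful representation) and that taking Zariski closure of the subgroup generated by a connected $K$-closed subvariety through $e$ really yields a connected $K$-subgroup. Once this algebraic-group bookkeeping is in place, the geometric content of the proposition --- namely the dimension drop --- is immediate from $\gamma \notin N_\mathbb{G}(\mathbb{F})$.
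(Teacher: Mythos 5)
Your argument is correct and is essentially the paper's proof: both form $\mathbb{H}=\mathbb{F}\cap\gamma\mathbb{F}\gamma^{-1}$, deduce the dimension drop from $\gamma\notin\Gamma_F$ together with connectedness of $\mathbb{F}$, and then replace $\mathbb{H}^\circ$ by a canonical subgroup in $\mathcal{F}$ containing $g^{-1}Ug$ --- the only (cosmetic) difference being that the paper takes $\mathbb{F}'$ to be the common kernel of all $K$-characters of $\mathbb{H}^\circ$, while you take the closure of the subgroup generated by unipotent elements. One local repair: your stated reason for connectedness of $\mathbb{F}'$ (``the generating set contains the identity and lies in the closed unipotent variety'') is not an argument --- a subgroup generated by a set containing $e$ need not be connected; the correct justification is that every unipotent element of $\mathbb{H}^\circ(\overline{K})$ lies in the unipotent part of some Borel subgroup of $\mathbb{H}^\circ$, so $\mathbb{F}'$ is generated by a family of connected subgroups through $e$ and is therefore closed and connected by the standard generation lemma in Borel's book.
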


\begin{proof}
 Let $\gamma\in\Gamma\setminus\Gamma_F$. Then $\mathbb{F}\cap\gamma\mathbb{F}\gamma^{-1}$ is a $K$-subgroup of $\mathbb{F}$ of strictly smaller dimension as $\mathbb{F}$ is connected and $\gamma\notin\Gamma_F$. Let $\mathbb{F}'\subset (\mathbb{F}\cap\gamma\mathbb{F}\gamma^{-1})^\circ$ be the common kernel of all $K$-characters of $ (\mathbb{F}\cap\gamma\mathbb{F}\gamma^{-1})^\circ$. Then $\mathbb{F}'\in\mathcal{F}.$
 
 Now suppose $g,g\gamma\in X(F,U)$, then $g^{-1} U g\subset F$ and $g^{-1} U g\subset \gamma F \gamma^{-1}$ by definition of $X(F,U)$.
 As $U$ is a connected unipotent  subgroup we have  $g^{-1} U g\subset F'$.
 Hence $g\in X(F',U)$ again by definition.
\end{proof}

\begin{prop}\label{self-intersection}
Let $D\subset G$ be a compact subset of $G$ and let $\mathcal{Y}$ be the set of all $(F,\Gamma)$-self-intersection points in $D.$ Then for any compact subset $D'$ of $D\setminus\mathcal{Y}$ there exists an open neighborhood $\Omega$ of $D'$ in $G$ such that $\Omega$ does not contain any point of $(F,\Gamma)$-self-intersection.
\end{prop}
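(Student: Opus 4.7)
The plan is to give a direct topological construction using the discreteness of $\Gamma$ in $G$. The key observation is that although $\Gamma\setminus\Gamma_F$ is generally infinite, only finitely many of its elements can produce self-intersections inside any fixed relatively compact neighborhood of $D$; once this finite list is isolated, the proposition reduces to elementary separation arguments in a locally compact Hausdorff space.

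First, I would fix a relatively compact open set $V$ with $D\subset V$. Since $V^{-1}V$ is relatively compact and $\Gamma$ is discrete in $G$, the set $\Gamma_0:=\Gamma\cap V^{-1}V=\{\gamma\in\Gamma:\ V\cap V\gamma^{-1}\neq\emptyset\}$ is finite. Enumerate the elements of $\Gamma_0\cap(\Gamma\setminus\Gamma_F)$ as $\gamma_1,\ldots,\gamma_k$.

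Second, I would observe that $D'$ and $D'\gamma_i^{-1}$ are disjoint compact sets for each $i$: if $x$ were in both, then $x\in D'\subset D$ and $x\gamma_i\in D'\subset D$ with $\gamma_i\in\Gamma\setminus\Gamma_F$ would place $x$ in $\mathcal{Y}$, contradicting $x\in D'\subset D\setminus\mathcal{Y}$. Using the Hausdorff property of $G$, I choose disjoint open neighborhoods $U_i\supset D'$ and $V_i\supset D'\gamma_i^{-1}$, and set $\Omega_i:=U_i\cap V_i\gamma_i$. This is open, contains $D'$ (since $D'\gamma_i^{-1}\subset V_i$ implies $D'\subset V_i\gamma_i$), and satisfies $\Omega_i\cap\Omega_i\gamma_i^{-1}\subset U_i\cap V_i=\emptyset$.

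Finally, I would set $\Omega:=V\cap\bigcap_{i=1}^k\Omega_i$, an open neighborhood of $D'$. For any $\gamma\in\Gamma\setminus\Gamma_F$, either $\gamma\notin\Gamma_0$, in which case $\Omega\cap\Omega\gamma^{-1}\subset V\cap V\gamma^{-1}=\emptyset$, or $\gamma=\gamma_i$ for some $i\leq k$, in which case $\Omega\cap\Omega\gamma^{-1}\subset\Omega_i\cap\Omega_i\gamma_i^{-1}=\emptyset$. Either way $\Omega$ contains no $(F,\Gamma)$-self-intersection point, as required. The only real subtlety is to see that discreteness of $\Gamma$ reduces the problem to a finite collection of separation conditions; the representation-theoretic apparatus of Lemma \ref{representation} and Proposition \ref{proper} is not needed for this standalone topological statement.
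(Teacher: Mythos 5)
Your proof is correct. It takes a genuinely different, though equally elementary, route from the paper: the paper argues by contradiction, taking a decreasing sequence of relatively compact neighborhoods $\Omega_n$ shrinking to $D'$, extracting convergent subsequences $g_n\to g$, $g_n'\to g'$ with $g_n=g_n'\gamma_n$, and using discreteness of $\Gamma$ to force $\gamma_n$ to be eventually constant, so that the limit exhibits a point of $D'$ as an $(F,\Gamma)$-self-intersection point of $D$, contradicting $D'\subset D\setminus\mathcal{Y}$. You instead give a direct construction: discreteness plus the relative compactness of $V^{-1}V$ reduces the problem to the finitely many $\gamma_1,\dots,\gamma_k\in\Gamma\setminus\Gamma_F$ that could possibly cause intersections near $D$, and for each of these you separate the disjoint compact sets $D'$ and $D'\gamma_i^{-1}$ by disjoint open sets and intersect. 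Both arguments rest on exactly the same two facts (discreteness of $\Gamma$ and $D'\cap\mathcal{Y}=\emptyset$), so neither is deeper than the other; your version has the mild advantages of being constructive, of making explicit the reformulation of ``no self-intersection in $\Omega$'' as $\Omega\cap\Omega\gamma^{-1}=\emptyset$ for all $\gamma\in\Gamma\setminus\Gamma_F$ (which is the form actually used later in the proof of Theorem \ref{linearization}), and of not needing to choose a shrinking exhaustion of neighborhoods, while the paper's contradiction argument is a line or two shorter on the page. You are also right that Lemma \ref{representation} and Proposition \ref{proper} play no role here; the paper's own proof does not use them either.
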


\begin{proof}
Assume to the contrary so there exists a decreasing sequence $\Omega_n$ of open neighborhoods of $D'$ such that $\bigcap_n\Omega_n=D'$ and that for any natural number $n$ there are elements $g_n, g_n'\in\Omega_n$ where $g_n=g_n'\gamma_n$ and $\gamma_n\not\in\Gamma_F$.  Passing to a subsequence if necessary we assume $\{g_n\}$ and $\{g_n'\}$ converge. This gives $\gamma_n$ converges, thus for large enough $n$ we have $\gamma_n=\gamma\notin\Gamma_F$. Now if $g_n\rightarrow g,$ then we have $g,g\gamma\in D'.$ This says $g\in\mathcal{Y},$ which contradicts the fact $g\in D'.$ 
\end{proof}

\begin{proof}[Proof of Theorem~\ref{linearization}.] Let the notations and assumptions be as in the statement of Theorem~\ref{linearization}. The proof will be by induction on the $\dim\bbp.$ Note that there is nothing to prove when $\dim\bbp=0.$ Recall that we assume $S=\{\nu\}$, i.e.\ that $
G= G_\nu$.

So suppose $C\subset X(F,U)$ is compact. We may suppose $C\subset\mathcal{K}$.
 We apply Proposition~\ref{polylinear} to the compact subset $A=\rho(C) w_0\subset M=\overline{\rho(X(F_\nu,U))w_0}$, which defines for us a compact subset $B\subset M$. We may assume $A\subset B$. Also use Theorem~\ref{nondivergence} to find a compact subset $\mathcal{L}$ of $G$ with the properties $\mathcal{K}\subset\mathcal{L}\Gamma/\Gamma$ and that for any $x\in\mathcal{K}$
\begin{equation}\label{linearizationnond}
\frac{1}{\theta(\bfr)}\theta(\{\mathbf{t}\in\bfr|\la_n(\ubf(\mathbf{t}))x\in\mathcal{L}\Gamma/\Gamma\})\geq1-\vare.
\end{equation}
From Proposition~\ref{proper} and (\ref{eta2}) it follows that there exists a compact subset 
$D_0\subset X(F,U)$ such that 
$$
  \vartheta^{-1}(\mathcal{L}\Gamma/\Gamma\times B)= D_0\Gamma_L/\Gamma_L\h\h\h\mbox{and}\h\h\h D_0L\cap\mathcal{L}\Gamma\subset D_0\Gamma_L.
$$
We may suppose $D_0\supset C$.
As the set $\mathcal{Y}$ of all $(F,\Gamma)$-self-intersections of $D_0$ may be non-empty (and we need to control these later) we will define $D$ only after discussing $\mathcal{Y}$.

As $D_0\subset X(F,U)$ we know by Proposition \ref{self-int1} that $\mathcal{Y}$ belongs to the union of the $X(F',U)$ for subgroups $\mathbb{F}'\in\mathcal{F}$ of strictly smaller dimension. Moreover, as $D_0$ is compact and $\Gamma$ is discrete, there are only finitely many 
$\gamma\in\Gamma$ with $D_0\cap D_0\gamma\neq\emptyset$. Therefore, again by Proposition~\ref{self-int1} we have 
$\mathcal{Y}\subset \bigcup_{i=1}^\ell X(F_i',U)$
for finitely many $\mathbb{F}'_1,\ldots,\mathbb{F}'_\ell\in\mathcal{F}$. For each $i$ we define $C_i=D_0\cap X(F_i',U)$
and apply the inductive hypothesis to obtain the compact subsets
$D_i\subset X(F_i',U)$ satisfying the conclusion of the theorem with $\epsilon$ replaced by $\frac\epsilon\ell$.

We define $D=D_0\cup\bigcup_{i=1}^\ell D_i$.
To show that $D$ satisfies the property in the theorem suppose $\Phi$ is a neighborhood of $D$ in $G$. 
We will now work towards the definition of the neighborhood $\Psi$ of $C$.
As $\Phi$ is a neighborhood of $D_i$ for each $i=1,\ldots,\ell$,  we can apply the inductive hypothesis to find open
neighborhoods $\Psi_i$ of $C_i$. This shows that as long as $x\in\mathcal{K}\setminus(\Phi\Gamma/\Gamma)$ then for any $n\in\mathbb{N}$
we have
\begin{equation}\label{lower-dim-case}
 \frac{1}{\theta(\bfr)}\theta\Bigl(\Bigl\{\tbf\in\bfr|\h\la_n(\ubf(\tbf))x\in\bigcup_{i=1}^\ell\Psi_i\Gamma/\Gamma\Bigr\}\Bigr)<\vare
\end{equation} 

Removing $\Psi'=\bigcup_{i=1}^\ell \Psi_i$ from $D$ we obtain, by construction of the sets $C_i$, a compact subset 
$D'=D\setminus \Psi'\subset D\setminus\mathcal{Y}$. By Proposition~\ref{self-intersection} there exists
a neighborhood $\Omega\subset\Phi$ of $D'$ that also has no $(F,\Gamma)$-self-intersections. This gives a neighborhood $\Omega\cup \Psi'$ of $D$ and so in particular of $D_0$. Recall that $D_0\Gamma_L/\Gamma_L=\vartheta^{-1}(\mathcal{L}\Gamma/\Gamma\times B)$. We claim there is a neighborhood $\mathcal{Q}_0$ of $B$ such that 
\begin{equation}\label{linearproper}
 (\Omega\cup\Psi')\Gamma_L/\Gamma_L\supset\vartheta^{-1}(\mathcal{L}\Gamma/\Gamma\times \mathcal{Q}_0).
\end{equation} 
This is again a simple compactness argument relying on the properness of $\vartheta$ in Proposition \ref{proper}. By construction of $B$ (which relied on Proposition \ref{polylinear}) there exists now a neighborhood $\mathcal{Q}$ of $A=\rho(C) w_0$.
By continuity of the representation $\rho$ there exists a neighborhood $\Psi$ of $C$ such that $\rho(\Psi)w_0\subset\mathcal{Q}$. 
We claim that $\Psi$ satisfies the statement in the theorem for the given $\Phi$ and $(2+f\kappa)\vare$ instead of $\vare$, where $\kappa$ is the number of roots of unity in $K$ and $f$ is a constant depending on $U$ (more precisely the dimension and the residue field of $K_\nu$).

Let us indicate how the roots of unity will enter the estimate. Recall that $N_{\mathbb{G}}(\mathbb{F})$ equals the stabilizer of $\overline{K}w$
and that $w$ is $K$-rational. Hence there exists a $K$-character $\chi$ on $N_{\mathbb{G}}(\mathbb{F})$ such that $\rho(g)w_0=\chi(g)w_0$ for all $g\in N_G(F)$. As $\Gamma$ is commensurable with $\mathbb{G}(\mathcal{O}_\nu)$ and $\Gamma_F$ is a subgroup, it follows that $\rho(\Gamma_F)\subset K^\times$ is a bounded subgroup of $K_\omega^\times$ for all places $\omega\neq\nu$. However, this shows that $\rho(\Gamma_F)$ is contained in the group of roots of unity of $K$.

To show the theorem suppose $x=g\Gamma\in\mathcal{K}\setminus(\Phi\Gamma/\Gamma)$ and 
fix some integer $n\in\mathbb{N}$. Recall the union $\Psi'=  \bigcup_{i=1}^\ell\Psi_i$ of the neighborhoods
that was obtained via the inductive hypothesis. We define the following sets
$$\mathfrak{B}^{(1)}=\Bigl\{\tbf\in\bfr|\h\la_n(\ubf(\tbf))\not\in\mathcal{L}\Gamma/\Gamma\h\mbox{or}\h\la_n(\ubf(\tbf))\in\Psi'\Gamma/\Gamma\Bigr\}$$
and 
$$\mathfrak{B}^{(2)}=\Bigl\{\tbf\in\bfr|\h\la_n(\ubf(\tbf))\in(\Psi\Gamma/\Gamma\cap\mathcal{L}\Gamma/\Gamma)\setminus(\Psi'\Gamma/\Gamma)\Bigr\}.$$
Note that $\bfr^{(1)}\cup\bfr^{(2)}\supset\{\tbf\in\bfr|\h\la_n(\ubf(\tbf))\in\Psi\Gamma/\Gamma\}.$ Furthermore using (\ref{linearizationnond}) and (\ref{lower-dim-case}) above we have 
\begin{equation}\label{b1bound}\theta(\bfr^{(1)})\leq2\vare\theta(\bfr)\end{equation}
To finish the proof we need to give a similar upper bound for $\theta(\bfr^{(2)})$. 

For this notice first that for all $\gamma\in\Gamma$ we have $\rho(g\gamma) w\not\in\mathcal{Q}_0.$ Assume the contrary, then as $g\Gamma\in\mathcal{L}\Gamma/\Gamma$ we have $g\gamma\Gamma_L\in\vartheta^{-1}(\mathcal{L}\Gamma/\Gamma\times\mathcal{Q}_0)\subset\Omega\Gamma_L/\Gamma_L$ which implies $x=g\Gamma\in\Phi\Gamma/\Gamma$ which contradicts the assumption on $x$ and so proves the claim. 

Fix some $\gamma$ and the corresponding $q=\rho(g\gamma) w_0\in \rho(g\Gamma) w_0$. For each such $q$ we will define $\bfr_q\subset\bfr$ as follows; $\tbf\in\bfr_q$ if there is an open ball $O\subset\bfr$ such that 
$$\rho(\la_n(\ubf(O))g\gamma) w\subset\mathcal{Q}_0
$$
and there exists some
$$
\tbf'\in O\mbox{ such that }\la_n(\ubf(\tbf'))x\in\mathcal{L}\Gamma/\Gamma\setminus(\Psi'\Gamma/\Gamma).
$$ 
Note that $\bfr^{(2)}\subset\bigcup_{q}\bfr_q$, where the union
is taken over all $q\in\rho(g\Gamma)w$. For any $\tbf\in\bfr_q$ we let $\bfr_q(\tbf)$ be the largest ball in $\bfr_q$ which contains $\tbf.$
Due to the non-Archimedean property this definition makes sense as the union of two non-disjoint balls is always a ball. Also note that by definition there exists for any $\tbf\in\bfr_q$ some $\tbf'\in\bfr_q(\tbf)$ with $\la_n(\ubf(\tbf'))x\in\mathcal{L}\Gamma/\Gamma\setminus(\Psi'\Gamma/\Gamma)$.

We have the following property for the various $\bfr_q$'s which we will refer to as:\\
$(\ddagger)$ {\em Let $q\neq q'$ be in $g\Gamma w,$ such that $\bfr_q\cap\bfr_{q'}\neq\emptyset$ then $q'=\zeta q$ where $\zeta$ is a root of unity in $E$.}

To see this let $\tbf\in\bfr_q\cap\bfr_{q'},$ and let $\bfr_q(\tbf)$ (resp. $\bfr_{q'}(\tbf)$) be the largest ball in $\bfr_q$ (resp. in $\bfr_{q'}$) which contains $\tbf,$ as above. Due to the non-Archimedean property one of these balls contains the other which implies that there exists $\tbf_0\in\bfr_q(\tbf)\cap\bfr_{q'}(\tbf)$ such that $\la_n(\ubf(\tbf_0))x\in\mathcal{L}\Gamma/\Gamma\setminus(\Psi'\Gamma/\Gamma).$ The definitions give $\la_n(\ubf(\tbf_0))q=\la_n(\ubf(\tbf_0))g\gamma w$ and $\la_n(\ubf(\tbf_0))q'w=\la_n(\ubf(\tbf_0))g\gamma'w$ are in $\mathcal{Q}_0$. Hence by (\ref{linearproper}) we get  that 
$$
 \la_n(\ubf(\tbf_0))g\gamma, \la_n(\ubf(\tbf_0))g\gamma\in \Omega\Gamma_L.
$$ 
However, by construction $\Omega$ does not have any $(F,\Gamma)$-self-intersection. Thus we must have $\gamma'=\gamma\delta$ where $\delta\in\Gamma_F.$ As discussed above $\rho(\delta)w=\chi(\delta)w$ and $\chi(\delta)\in K$ is a root of unity which proves $(\ddagger)$.

Due to the maximality of the ball $\bfr_q(\tbf)$ and since it is a proper subset of $\bfr$ (due to the earlier established fact that $g\gamma w\not\in\mathcal{Q}_0$ for all $\gamma\in\Gamma$) we may apply the properties of $C,D,\mathcal{Q},\mathcal{Q}_0$ as given by Proposition \ref{polylinear} --- strictly speaking to the unique next largest ball containing $\bfr_q(\tbf)$ which is in $\theta$-measure only by a constant $c$ bigger. This gives
$$
 \theta(\bfr^{(2)}\cap\bfr_q(\tbf))\leq c\vare\theta(\bfr_q(\tbf)).
$$
We use the non-Archemidean feature  once more to say that for $\tbf$ and $\tbf'$ in $\bfr_q$ either $\bfr_q(\tbf)\cap\bfr_q(\tbf')=\emptyset$ or $\bfr_q(\tbf)=\bfr_q(\tbf')$, i.e.\ if we list these balls (without repetitions) we get a partition of $\bfr_q$. Summing over this partition we obtain
\begin{equation}\label{bqbound}\theta(\bfr^{(2)}\cap\bfr_q)\leq c\vare\theta(\bfr_q)\end{equation}

Recall that $\{\bfr_q\}$ gives a covering for $\bfr^{(2)}.$ This time we do not claim disjointness of the various sets. However, by $(\ddagger)$ the multiplicity of this cover is bounded by $\kappa$. Therefore, we get from summing (\ref{bqbound}) over all choices of $q$ the inequality 
\begin{multline*}
 \theta(\bfr^{(2)})\leq\sum_q\theta(\bfr^{(2)}\cap\bfr_q)\leq \\
 \leq c\vare\sum_q\theta(\bfr_q)\leq c\kappa\vare\theta\Bigl(\bigcup_q\bfr_q\Bigr)\leq c\kappa\vare\theta(\bfr)
\end{multline*}
This and (\ref{b1bound}) complete the proof.
\end{proof}


\section{Proof of Theorem~\ref{orbitclosure}}
Let the notation and conventions be as in the introduction and the statement of Theorem~\ref{orbitclosure}. In particular   
let $\bbg$ be a connected, simply connected, absolutely almost simple group defined over $K.$ Define $G=\prod_{\nu}\bbg(K_{\nu})$ and $H=G\times G$. Let
$$\mathcal{F}= \left\{\mathbb{F}\h\lvert \h\begin{array}{c}\mathbb{F} \mbox{ is a connected } K\mbox{-closed subgroup of}\h\mathbb{G}\times\bbg,\\ 
\mathbb{F}\h\mbox{has no}\h K\h\mbox{-rational character}\end{array}\right\}$$
If there is no fear for confusion, we will simply say $F\in\mathcal{F}.$
 
\begin{prop}\label{unipotentorbit}
Let $x=(x_1,x_2)=(g_1\Gamma_1,g_2\Gamma_2)\in X$ satisfy
that $x_1$ and $x_2$ are measure theoretically generic for the action of $U$ on $G/\Gamma_1$ and $G/\Gamma_2,$ respectively. 
Then $\n\cdot x$ is equidistributed in $(g_1,g_2)F_0\cdot\Gamma$. Here either $F_0={}^\tau\Delta(G)$ if there exists some inner automorphism $\tau$ for which $\n(g_1,g_2)\subseteq (g_1,g_2){}^\tau\Delta(G)$ and ${}^\tau\Delta(G)\in\mathcal{F}$, resp.\ $F_0=H$ if there doesn't exists such an automorphism $\tau$. 
\end{prop}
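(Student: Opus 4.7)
The strategy is to form the empirical measures
$$\mu_n=\frac{1}{\theta(A_n)}\int_{A_n}\delta_{\ubf x}\,d\theta(\ubf)$$
along an averaging net $A_n=\lambda_n(A)$ in $\mathcal{U}$ (Lemma~\ref{averlem}), extract a weak-$*$ subsequential limit $\mu$, and show that $\mu$ must be the Haar measure on $(g_1,g_2)F_0\Gamma$. Since the limit is independent of the subsequence, equidistribution follows.

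Tightness of $\{\mu_n\}$ is immediate from Theorem~\ref{nondivergence} applied with $\mathcal{K}=\{x\}$, and the Folner property of $\{A_n\}$ makes the limit $\mu$ invariant under $\mathcal{U}$. Since $x_i$ is assumed $U$-generic in $X_i$ and $\pi_i(\mathcal{U})=U$, the push-forwards satisfy $(\pi_i)_*\mu=m_{X_i}$, so $\mu$ is a joining. By Remark~\ref{ergodic-joining} every component of the $\mathcal{U}$-ergodic decomposition $\mu=\int\mu_y\,d\sigma(y)$ is itself an ergodic joining, and Theorem~\ref{joining} then says each $\mu_y$ is either $m_1\times m_2$ or the Haar measure on a closed orbit $(h_1,h_2){}^{\tau_0}\Delta(G)\Gamma$ for some inner automorphism $\tau_0$ induced by an element of $Z_G(U)$.

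The decisive step is to translate these possibilities into properties of $x$ itself via Theorem~\ref{linearization}. For each inner $\tau_0$ the algebraic set $X({}^{\tau_0}\Delta(G),\mathcal{U})$ is $K_S$-closed in $H$, and any type-(ii) ergodic component charges a neighborhood of its image in $X$. The linearization theorem then forces $x\in X({}^{\tau_0}\Delta(G),\mathcal{U})\Gamma/\Gamma$, which (after adjusting by a lattice element) is exactly the condition $\mathcal{U}(g_1,g_2)\subseteq(g_1,g_2){}^{\tau}\Delta(G)$ for some inner $\tau$. In Case $F_0=H$ no such $\tau$ exists by hypothesis, so no type-(ii) component can appear and $\mu=m_1\times m_2$, as required. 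In Case $F_0={}^\tau\Delta(G)$, a further application of Theorem~\ref{linearization} to subgroups of $\mathcal{F}$ properly contained in $\mathbb{F}_0$ (as in Proposition~\ref{self-int1}) excludes ergodic components living on strictly smaller closed orbits through $x$; the remaining mass must sit on the closed orbit $(g_1,g_2)F_0\Gamma$, and $\mathcal{U}$-ergodicity of the Haar measure there (via Mautner) forces $\mu$ to be that Haar measure.

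The principal obstacle is this last linearization step: one has to identify the correct subgroup $F_0={}^\tau\Delta(G)$ and the correct base point so that the closed orbit produced by the joining classification coincides with the orbit $(g_1,g_2)F_0\Gamma$ from the statement, and simultaneously rule out components associated to non-conjugate inner automorphisms or to strictly smaller subgroups of class $\mathcal{F}$. Once this is done, any weak-$*$ limit agrees with the Haar measure on the designated orbit, so convergence holds along the full sequence.
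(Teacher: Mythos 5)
Your proposal follows essentially the same route as the paper: empirical averages along the expanding maps, non-divergence for tightness, genericity to get a joining, ergodic decomposition plus Theorem~\ref{joining}, and then Theorem~\ref{linearization} to kill the mass on the singular sets $X({}^{\tau_0}\Delta(G),\n)\Gamma/\Gamma$ when $x$ does not lie on any of them. That is exactly the paper's Case~2, and your version of it is correct (the countability of $\mathcal{F}$, which you use implicitly when arguing component by component, is what lets one sum the estimates).

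The one place where you deviate is the closed-orbit case, and there your argument has a soft spot. You propose ``a further application of Theorem~\ref{linearization} to subgroups of $\mathcal{F}$ properly contained in $\mathbb{F}_0$'' to exclude components on smaller orbits; but Theorem~\ref{linearization} only gives information when the base point satisfies $x\in\mathcal{K}\setminus(\Phi\Gamma/\Gamma)$, and nothing prevents $x$ from lying on $X(F',\n)\Gamma/\Gamma$ for such a smaller $F'$, so the theorem is not directly applicable there. Moreover, the step is unnecessary: Theorem~\ref{joining} leaves no ``strictly smaller'' possibilities --- every ergodic component is either $m_1\times m_2$ or the Haar measure on a closed orbit of a full conjugated diagonal ${}^{z_y}\Delta(G)$. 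The paper instead argues directly from supports: since $\n(g_1,g_2)\subseteq(g_1,g_2)F_0$, the whole limit $\mu$ is carried by the closed set $(g_1,g_2)F_0\Gamma$, so no component is $m_1\times m_2$, and each type-(ii) component, being carried by $(g_1,g_2)F_0\Gamma$, must satisfy ${}^{z_y}\Delta(G)=(g_1,g_2)F_0(g_1,g_2)^{-1}$ and hence be the Haar measure of the full orbit. Your appeal to Mautner gives ergodicity of that Haar measure but not by itself the identification of $\mu$ with it; you should replace that sentence and the extra linearization step with this support argument.
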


\begin{proof}
Let $\bfr$ be a fixed neighborhood of the identity in $\n$ as in Section~\ref{seclinearization}. Note that $\n$ comes equipped with natural family of admissible expansions thanks to the fact that $U$ is a horospherical subgroup of $G.$ We keep the notation $\la_n$ for this family as in Section~\ref{seclinearization}. Let $\widetilde{X}$ be the one-point compactification of $X$ if $X$ is not compact and be $X$ if $X$ is compact. For any natural number $n$ define the probability measure $\mu_n$ on $X$ by 
$$\int_X f(y)d\mu_n(y)=\frac{1}{\theta(\bfr)}\int_\bfr f(\la_n(\tbf)x)d\theta(\tbf)$$ 
where $f$ is a bounded continuous function on $X.$ As $\widetilde{X}$ is compact the space of probability measures on $\widetilde{X}$ is weak$^*$ compact. Let $\mu$ be a limit point of $\{\mu_n\}$. By identifying $\mu$ we show that there is only one limit points which in return gives convergence. It follows from nondivergence of unipotent trajectories (Theorem~\ref{nondivergence}) that $\mu$ is concentrated on $X$. Note also that thanks to polynomial like behavior of unipotent flows we have $\mu$ is $\n$-invariant. The condition  in the proposition regarding the genericity of the components of $x$ guarantees that $\mu$ is a joining for the $U$ action on $X_1$ and $X_2$. We let $\mu=\int_Y\mu_yd\sigma(y)$ be a decomposition of $\mu$ into $\n$-ergodic components. By assumption $\bbg$ is simply connected so that the action of $U$ on $X_1$ and $X_2$ is ergodic. Therefore, for a.e.\ $y\in Y$ the ergodic measure $\mu_y$ is a joining. We now consider the two cases in the proposition separately.

\textbf{\textit{Case 1.}} Suppose there exists an inner automorphism $\tau$ with $F_0={}^\tau\Delta(G)$ being $K$-closed and satisfying
$\n(g_1,g_2)\subseteq(g_1,g_2)F_0$. In this case $\mu$ is a measure on the closed set $(g_1,g_2)F_0\Gamma\subset X$. As a result we have $\mu\neq\haar$ and the same holds for almost every ergodic component $\mu_y$. Now recall from steps 2 and 3 of the proof of Theorem~\ref{joining} in Section~\ref{secjoining} that the ergodic joinint $\mu_y$ is invariant under ${}^{z_y}\Delta(G)$ for some $z_y\in Z(U)$. Since $\mu_y\neq\haar$ we have $\mu_y$ is the ${}^{z_y}\Delta(G)$-invariant measure on a closed ${}^{z_y}\Delta(G)$-orbit. However, the support of $\mu_y$ is in $(g_1,g_2)F\Gamma.$ So we have for $\sigma$-almost every $y,\h (g_1,g_2)F(g_1,g_2)^{-1}={}^{z_y}\Delta(G)$ thus $\mu$ is the Haar measure on $(g_1,g_2)F\Gamma.$

\textbf{\textit{Case 2.}} There is no inner automorphism as in case 1.
Now Theorem~\ref{joining} says that $\sigma$-almost every ergodic component $\mu_y$ is either the Haar measure on $X$ or the Haar measure on a closed orbit of a subgroup ${}^{z_y}\Delta(G)$. In the latter case we also know that $\mu_y$ is the Haar measure on $(e,g_y)F_y\Gamma$ where $F_y={}^{\tau_y}\Delta(G) \in\mathcal{F}$. As $\mathcal{F}$ is countable, we can rewrite the above ergodic decomposition as the following countable sum
\begin{equation}\label{measuredecomp}
\mu=c\haar+\sum_i\mu_{\delta_i}.
\end{equation}
Here $\haar$ is the Haar measure on $X$, $c\in[0,1]$, and $\mu_{\delta_i}$ is the restriction of $\mu$ to 
\begin{equation}\label{singular}
X_i=X({}^{\delta_i}\Delta(G),\n)\setminus\bigcup\left\{X(F',\n)|\h \begin{array}{c}F'\in\mathcal{F}, F'\subset{}^{\delta_i}\Delta(G), \\ \dim F'<\dim{}^{\delta_i}\Delta(G)\end{array}\right\}.
\end{equation}
Here $\delta_i\in G$ is a sequence such that the sequence ${}^{\delta_i}\Delta(G)\in\mathcal{F}$ contains precisely one element of each element of $\mathcal{F}$ of the form ${}^\tau\Delta(G)$.  Furthermore each $\n$-ergodic component of $\mu_{\delta_i}$ is the $h({}^{\delta_i}\Delta(G))h^{-1}$-invariant measure on the closed orbit $h \h{}^{\delta_i}\Delta(G)\hh\Gamma$ for some $h\in X_i.$ We now make use of Theorem~\ref{linearization} which gives  
$$\mu(X(F,\n))=0\h\mbox{for any}\h F\neq H$$ 
This using (\ref{singular}) and (\ref{measuredecomp}) above guarantee that $\mu=\haar.$
\end{proof}

\noindent
We can now complete the proof of Theorem~\ref{orbitclosure}. Let $x=(g_1\Gamma_1,g_2\Gamma_2)\in X$ be an arbitrary point. We want to consider the orbit $\Delta(G)\cdot x$. First note that as $U$ acts ergodically on $G/\Gamma_i$ for $i=1,2,$ there is an $m_G$-full measure subset $\mathcal{G}$ of $G$ such that $(g,g)\cdot x$ satisfies the genericity hypothesis of Proposition~\ref{unipotentorbit} for all $g\in\mathcal{G}$. For any $F={}^\tau\Delta(G)\in\mathcal{F}$ we define
$$
 \mathcal{E}_F=\{g\in\mathcal{G}|\h\n(gg_1,gg_2)\subset(gg_1,gg_2)F\}.
$$
 Note now that $\mathcal{F}$ is countable, so either there is some $g\in\mathcal{G}\setminus\bigcup_F \mathcal{E}_F$ or $m_G(\mathcal{E}_{F_0})$ is positive for some $F_0.$ If $g\in\mathcal{G}\setminus\bigcup_F \mathcal{E}_F$ then by Proposition~\ref{unipotentorbit}, we have that $(gg_1,gg_2)\Gamma$ is generic for $m_{X_1}\times m_{X_2}$ which proves that $\Delta(G)(g_1,g_2)\Gamma$ is dense. On the other hand,  if $m_G(\mathcal{E}_{F_0})$ is positive for some $F_0$ then $(g_1,g_2)^{-1}\Delta(G)(g_1,g_2)\subset F_0$ and $\Delta(G)\cdot x$ equals $(g_1,g_2)F_0\cdot\Gamma.$   
 


\appendix
\section{Quasi-isometries of irreducible positive-rank arithmetic groups over function fields, by Kevin Wortman}

\subsection{Quasi-isometries}

Any finitely generated group has a left invariant word metric that is unique up to quasi-isometry. Recall that a quasi-isometry between metric spaces is a function $\phi : X
\rightarrow Y$ such that there are constants $L \geq
1$ and $C \geq 0$ for which any $x_{1},x_{2} \in X$:
$$\frac{1}{L} d\big(x_{1},x_{2}\big) -C \leq d\big(\phi(x_{1}),\phi(x_{2})\big)
\leq   L d\big(x_{1},x_{2}\big) +C$$
and such that every point in $Y$ is within distance $C$ of some point
in the image of $X$.

The surge of interest in viewing finitely-generated groups as geometric objects led to the ongoing investigation of which distinct groups are quasi-isometric to each other. Just as important is deciding all the self-quasi-isometries that a given group can exhibit, and for this it is convenient to define quasi-isometry groups:

 For a metric space $X$,
we define the relation $\sim$  on the set of functions  $X\rightarrow X$
by $\phi \sim \psi$ if
$$\sup _{x\in X} d\big(\phi(x),\psi(x)\big)<\infty$$
Then if $\Gamma$ is a finitely-generated group with a word metric, we
form the set of all quasi-isometries of $\Gamma$, and denote the
quotient modulo $\sim$ by $\mathcal{QI}(\Gamma)$. We call
$\mathcal{QI}(\Gamma)$ the \emph{quasi-isometry group} of $\Gamma$
as it has a natural group structure arising from function
composition.

\subsection{Quasi-isometries of lattices}

Ideally one would like to be able to determine the quasi-isometry group of any finitely-generated group. While in practice this is a difficult problem for a general group, there has been some very good progress made on this problem for certain special classes of  finitely-generated groups. Not surprisingly, since quasi-isometries originated in Mostow's study of strong rigidity, one of the special classes that was studied with success was the class of irreducible lattices in semisimple groups. Again, not surprisingly considering the origins of these problems in Mostow's work, it was the quasi-isometry groups of cocompact lattices that were approached and classified first.

The quasi-isometry groups of cocompact lattices in semisimple groups over nondiscrete locally compact fields was worked out independently by cases in the work of Mostow, Tukia, Koranyi-Reimann, Pansu, and Kleiner-Leeb \cite{Mo}, \cite{Tu}, \cite{KR}, \cite{P}, \cite{KL}. To summarize briefly, if $\Gamma$ is an irreducible cocompact lattice in a semisimple Lie group $G$, then $\mathcal{QI}(\Gamma)$ is isomorphic ``up to compact groups" to $G$ as long as the only rank one factors of $G$ are locally isomorphic to $\text{Sp}(n,1)$ and $F_4^{-20}$. If $G$ has any other rank one factor, then the quasi-isometry group does have a definable structure, but that structure varies by cases, and in any of those cases $\mathcal{QI}(\Gamma)$ is infinite-dimensional.

The classification of quasi-isometry groups of irreducible non-cocompact lattices in real semisimple Lie groups was worked out by Schwartz, Farb, and Eskin \cite{S}, \cite{F-S}, \cite{S2}, \cite{Es}, \cite{Fa}. Later, Taback and Wortman extended the classification to the setting of irreducible non-cocompact arithmetic lattices in Lie groups over nondiscrete, locally compact fields of characteristic zero \cite{Ta}, \cite{W2}, \cite{W3}. Roughly speaking, any quasi-isometry of a lattice described in this paragraph is a finite distance in the sup-norm from a commensurator, as long as the lattice in question is not commensurable with $\mathbf{PGL_2}(\mathbb{Z})$. To state the result precisely, we first need to organize some notation.

\subsection{Commensurators} 
We let $K$ be a global field, $V_K$ the set
of all inequivalent valuations on $K$, and $V_K^\infty \subseteq
V_K$ the subset of archimedean valuations. We will use $S$ to
denote a finite nonempty subset of $V_K$ that contains $V_K^\infty$, and we
write the corresponding ring of $S$-integers in $K$ as
$\mathcal{O}_S$.

For any valuation
$v\in V_K$, we let $K_v$ be the completion of $K$ with respect to
$v$. For any nonempty set of valuations $S \subseteq V_K$, and any algebraic $K$-group $\mathbf{G}$,
we define
$$G_{S} = \prod _{v \in S} \mathbf{G}(K_v)$$
We identify $\mathbf{G}(\mathcal{O}_S)$
 as a discrete subgroup of $G_S $ using the diagonal embedding.

We let $\text{Aut}(G_S)$ be the group of topological group
automorphisms of $G_S$. An automorphism $\psi \in \text{Aut}(G_S)$
\emph{commensurates} $\mathbf{G}(\mathcal{O}_S)$ if
$\psi(\mathbf{G}(\mathcal{O}_S)) \cap \mathbf{G}(\mathcal{O}_S)$
is a finite index subgroup of both $\psi
(\mathbf{G}(\mathcal{O}_S) )$ and $\mathbf{G}(\mathcal{O}_S)$.

We define the \emph{commensurator group of}
$\mathbf{G}(\mathcal{O}_S)$ to be the subgroup of
$\text{Aut}(G_S)$ consisting of automorphisms that commensurate
$\mathbf{G}(\mathcal{O}_S)$. This group is denoted as
$\text{Comm}_{\text{Aut}(G_S)}(\mathbf{G}(\mathcal{O}_S))$. Notice that it
differs from the standard definition of the commensurator group
of $\mathbf{G}(\mathcal{O}_S)$ in that we have not restricted
ourselves to inner automorphisms.

\subsection{Number fields}

The sum of the results mentioned above of Schwartz, Farb, Eskin, Taback, and Wortman are given by the following

\begin{thm}\label{t:a} Suppose $K$ is a global number field, and
$\mathbf{G}$ is a connected, absolutely simple, $K$-isotropic, algebraic $K$-group of adjoint type.
If either $K \ncong \mathbb{Q}$,
$S \neq V_K^\infty$, or $\mathbf{G}$ is not
$\mathbb{Q}$-isomorphic to $\mathbf{PGL_2}$, then there is an
isomorphism
$$ \mathcal{QI}(\mathbf{G}(\mathcal{O}_S) ) \cong \text{\emph{Comm}}_{\text{\emph{Aut}}(G_S)}(\mathbf{G}(\mathcal{O}_S))$$

\end{thm}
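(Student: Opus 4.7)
The plan is to realize $\mathbf{G}(\mathcal{O}_S)$ geometrically and then bootstrap: first show that any self-quasi-isometry must preserve a canonical coarse structure coming from the cusps, then use building and symmetric-space rigidity to promote the quasi-isometry to an automorphism of $G_S$ commensurating the lattice. For the model space, I would let $X_S=\prod_{v\in S}X_v$ where $X_v$ is the symmetric space for $v\in V_K^\infty$ and the Bruhat--Tits building for $v\in S\setminus V_K^\infty$. By reduction theory (Borel--Serre over $K$, extended to $S$-arithmetic in the style of Harder--Serre) the lattice acts cocompactly on a ``thick part'' obtained from $X_S$ by deleting a countable, locally finite family of disjoint horoballs indexed by $\mathbf{G}(\mathcal{O}_S)$-orbits of proper parabolic $K$-subgroups of $\mathbf{G}$. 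The lattice is then quasi-isometric to this thick part, and the horospherical cross-sections inherit a polycyclic/exponential-distortion structure governed by the unipotent radical.

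Next I would establish that every quasi-isometry $\phi$ of the lattice coarsely permutes these horoballs. The divide is by $K_v$-rank: in the higher-rank case (Eskin) one uses that top-dimensional quasi-flats are close to honest flats, and horoballs are detected as unique maximal families of parallel Euclidean half-spaces that cannot be pushed into the interior; in the rank-one case (Schwartz, Farb--Schwartz, Taback) one uses quasi-horoball invariance coming from the isolated-flats or relatively hyperbolic geometry of the thick part. The $S$-arithmetic twist introduces factors that are buildings, and one handles these via Kleiner--Leeb's classification of top-dimensional quasi-flats in Euclidean buildings, together with Mostow-style arguments at each non-archimedean place.

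Once horoballs are preserved setwise, I would derive the induced boundary map. Each horoball has a preferred flat factor $\times$ nilpotent factor, and quasi-isometric rigidity of these solvable pieces (Pansu on Carnot groups for archimedean cusps, and the building analogue for $p$-adic cusps) forces $\phi$ to act on the intersection pattern of cusps in an algebraic way. Patching these local constraints gives a bijection of the rational parabolics and, via the Bruhat decomposition, an automorphism $\Phi\in\operatorname{Aut}(G_S)$ that agrees with $\phi$ at infinity. Using strong approximation together with the special case of Raghunathan for $\Delta(G)$ on $G/\Gamma\times G/\Gamma$ (Theorem~\ref{orbitclosure}) and the dichotomy for $\Gamma_1\Gamma_2$ mentioned in the introduction, one shows that $\Phi$ must actually commensurate $\mathbf{G}(\mathcal{O}_S)$, giving the map $\mathcal{QI}(\mathbf{G}(\mathcal{O}_S))\to \operatorname{Comm}_{\operatorname{Aut}(G_S)}(\mathbf{G}(\mathcal{O}_S))$. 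Injectivity comes from the fact that two commensurating automorphisms sup-close on the lattice must agree on a finite-index subgroup, hence coincide; surjectivity is automatic since commensurating automorphisms tautologically induce quasi-isometries.

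The main obstacle is the transition from ``horoballs are permuted'' to ``the permutation is algebraic''. In the cocompact setting this step fails badly for $\operatorname{Sp}(n,1)$ and $F_4^{-20}$ factors, but for \emph{non-uniform} lattices the presence of unipotent cusp groups supplies rigid combinatorial data (their nilpotent geometry and the way different cusps are glued along rational parabolics) that a quasi-isometry cannot scramble. Executing this cusp-combinatorics argument uniformly across all places of $K$ --- archimedean, non-archimedean of residue characteristic zero, and (for the function-field analogue to be treated in the appendix) non-archimedean of positive characteristic --- and then feeding it into the dense-image/closed-orbit dichotomy for $\Gamma_1\Gamma_2$ is where the real work lies; the exclusion of $\mathbf{PGL}_2(\mathbb{Z})$ is precisely the low-complexity case in which the cusp combinatorics degenerates and the commensurator group is strictly smaller than $\mathcal{QI}$.
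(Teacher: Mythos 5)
You should first be aware that the paper does not prove Theorem~\ref{t:a} at all: it is stated in the appendix explicitly as a summary of prior results of Mostow, Schwartz, Farb, Eskin, Taback and Wortman (\cite{S}, \cite{F-S}, \cite{S2}, \cite{Es}, \cite{Fa}, \cite{Ta}, \cite{W2}, \cite{W3}), and the only quasi-isometry statement actually proved here is the function-field Theorem~\ref{t:c}. Your outline does reproduce the architecture of those references --- reduction theory realizes the lattice as the thick part of $\prod_{v\in S} X_v$ with horoballs deleted, quasi-isometries are shown to coarsely permute the horoballs (top-dimensional quasi-flats in the style of Eskin and Kleiner--Leeb in higher rank, Schwartz's methods in rank one), the boundary data is rigidified into an element of $\operatorname{Aut}(G_S)$, and one finally checks that this automorphism commensurates the lattice. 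So the approach is the right one, but as written it is a plan rather than a proof: every step that constitutes the actual content of those papers (``horoballs are preserved,'' ``the permutation is algebraic,'' the passage from cusp combinatorics to an automorphism of $G_S$) is named and deferred, and these are precisely the places where the case analysis by rank and by place is delicate.

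One concrete misstep: you invoke Theorem~\ref{orbitclosure} and the $\Gamma_1\Gamma_2$ dichotomy of this paper to show that the resulting automorphism commensurates $\mathbf{G}(\mathcal{O}_S)$. Theorem~\ref{orbitclosure} is a positive-characteristic result, whereas Theorem~\ref{t:a} concerns global number fields; in characteristic zero the corresponding closed-or-dense dichotomy is supplied by Ratner's orbit closure theorem (or handled by other means in the cited papers), not by anything proved here. It is only for the function-field Theorem~\ref{t:c} that the appendix needs Theorem~\ref{orbitclosure}, and that commensuration step (a dense orbit contradicts the finite Hausdorff distance between $\Gamma$ and $\Gamma^*$; a closed orbit forces $\Gamma\Gamma^*$ to be finite in $\widetilde{G}_S/\Gamma^*$) is the one piece of the argument the appendix actually writes out, quoting Proposition 6.9 of \cite{W2} for the identification $\mathcal{QI}(\mathbf{G}(\mathcal{O}_S))\cong\text{Aut}_{\text{Hd}}(G_S\,;\,\mathbf{G}(\mathcal{O}_S))$ as a black box. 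If your goal were the theorem actually proved in this paper, that dichotomy argument is the part worth spelling out, not the geometric steps.
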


Note that Theorem~\ref{t:a} does not apply to $\mathbf{PGL_2}(\mathbb{Z})$ as this group is virtually free, and thus has an uncountable quasi-isometry group.

\subsection{Function fields} While Theorem~\ref{t:a} completely resolves the quasi-isometric classification of non-cocompact, irreducible, arithmetic groups over global number fields, most cases for the quasi-isometric classification of non-cocompact, irreducible, arithmetic groups over global function fields had remained open. The expected result is as follows:

\begin{conj}\label{c:b} Suppose $K$ is a global function field, and
$\mathbf{G}$ is a connected, absolutely simple, $K$-isotropic, algebraic $K$-group of adjoint type.
If $\sum _{v \in S} \text{\emph{rank}}_{K_v}(\mathbf{G})>1$, then
 there is an
isomorphism
$$ \mathcal{QI}(\mathbf{G}(\mathcal{O}_S) ) \cong \text{\emph{Comm}}_{\text{\emph{Aut}}(G_S)}(\mathbf{G}(\mathcal{O}_S))$$

\end{conj}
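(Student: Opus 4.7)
The plan is to adapt the Schwartz--Farb--Eskin--Taback--Wortman strategy of Theorem~\ref{t:a} to positive characteristic, using the orbit-closure result Theorem~\ref{orbitclosure} of this paper as the key new ingredient that was previously unavailable. Write $\Gamma = \mathbf{G}(\mathcal{O}_S)$ and $G = G_S$, and let $X = \prod_{v \in S} X_v$ be the product of the Bruhat--Tits buildings of the factors $\mathbf{G}(K_v)$. By reduction theory for $S$-arithmetic groups over function fields (Harder), $\Gamma$ acts cocompactly on the \emph{neutered building} $X_\Gamma \subset X$ obtained by excising open horoball neighbourhoods of the $\Gamma$-rational parabolic boundary points. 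The higher-rank hypothesis $\sum_{v \in S} \text{rank}_{K_v}(\mathbf{G}) > 1$ ensures that $X$ is a thick Euclidean building of rank at least two, and $\Gamma$ is quasi-isometric to $X_\Gamma$ with the induced path metric. The map $\alpha \mapsto [\alpha|_\Gamma]$ from $\text{Comm}_{\text{Aut}(G_S)}(\Gamma)$ to $\mathcal{QI}(\Gamma)$ is clearly defined and essentially injective (modulo finite kernel); the hard direction is surjectivity.

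Given a self-quasi-isometry $\phi$ of $\Gamma$, the first substantial task is horoball pattern rigidity: any self-quasi-isometry of $X_\Gamma$ must quasi-preserve the pattern of deleted horoballs. This lets $\phi$ be extended to a quasi-isometry $\tilde\phi$ of the ambient building $X$ that carries the $\Gamma$-horoball pattern to itself up to bounded Hausdorff error. One then applies the Kleiner--Leeb-type rigidity for irreducible higher-rank Euclidean buildings, adapted to products of Bruhat--Tits buildings over local fields of positive characteristic, to conclude that $\tilde\phi$ lies at bounded sup-distance from an element $\alpha \in \text{Aut}(G_S)$. The element $\alpha$ inherits the property of approximately preserving the horoball pattern, so it carries the collection of $\Gamma$-rational parabolic subgroups to itself up to finite ambiguity.

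It remains to show $\alpha$ actually commensurates $\Gamma$, and this is exactly where Theorem~\ref{orbitclosure} is used. Setting $\Gamma_2 = \alpha(\Gamma)$, the dichotomy for products of lattices deduced in the introduction from Theorem~\ref{orbitclosure} says that either $\Gamma \Gamma_2 / \Gamma_2$ is a finite subset of $G/\Gamma_2$, or $\Gamma \Gamma_2$ is dense in $G$. Density would force the $\Gamma$-orbit of any $\Gamma_2$-horoball to be dense in $X$, contradicting the fact that the $\Gamma$- and $\Gamma_2$-horoball patterns in $X$ lie at finite Hausdorff distance (by the approximate preservation established above). Hence $\Gamma \cap \Gamma_2$ has finite index in both $\Gamma$ and $\Gamma_2$, which is exactly commensurability, so $\alpha \in \text{Comm}_{\text{Aut}(G_S)}(\Gamma)$ represents $[\phi]$.

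The main obstacle I anticipate is not this last arithmetic step, which follows readily once the geometric framework is in place, but the horoball pattern rigidity together with the building rigidity in positive characteristic. These require translating the delicate cusp-geometry arguments of Schwartz, Eskin and Farb from symmetric spaces to Bruhat--Tits buildings, where horocyclic dynamics are governed by unipotent radicals of $K_v$-parabolics with purely combinatorial rather than smooth structure; subtleties such as wild automorphisms of residue fields and Moufang-type phenomena at infinity may complicate the direct porting of the Kleiner--Leeb argument, and one may need the additional hypothesis of adjoint type (as in the statement) precisely to keep the centre of $G$ from creating spurious quasi-isometries.
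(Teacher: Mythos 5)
The statement you are proving is labelled a \emph{conjecture} in the appendix, and the paper does not prove it in full generality: it proves only Theorem~\ref{t:c}, the special case in which $\operatorname{rank}_{K_v}(\mathbf{G})>1$ for \emph{every} $v\in S$, notes that the mixed-rank case follows from \cite{W3}, and explicitly leaves the case where all local ranks equal one (Conjecture~\ref{c:d}) open. Your argument glosses over precisely the step that is open. The hypothesis $\sum_{v\in S}\operatorname{rank}_{K_v}(\mathbf{G})>1$ permits every factor $\mathbf{G}(K_v)$ to have $K_v$-rank one provided $|S|\geq 2$; in that case $X$ is a product of trees, and there is no ``Kleiner--Leeb-type rigidity'' to apply. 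A quasi-isometry of a product of regular trees need not be at bounded sup-distance from any automorphism of $G_S$ --- individual trees have uncountable, infinite-dimensional quasi-isometry groups, and the best available result (from \cite{W2}) is only that a quasi-isometry of $\mathbf{G}(\mathcal{O}_S)$ is induced by a \emph{factor-preserving quasi-isometry} of the product of trees, which is far from being an element of $\operatorname{Aut}(G_S)$. Your closing remark treats this as a technical translation issue, but it is the substantive open problem; the appendix states outright that the assumption $\operatorname{rank}_{K_v}(\mathbf{G})>1$ for each $v$ is essential in the proof of $\mathcal{QI}(\mathbf{G}(\mathcal{O}_S))\cong\operatorname{Aut}_{\operatorname{Hd}}(G_S\,;\,\mathbf{G}(\mathcal{O}_S))$ from \cite{W2}.

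For what it is worth, your final step --- using Theorem~\ref{orbitclosure} to upgrade ``$\alpha(\Gamma)$ at finite Hausdorff distance from $\Gamma$'' to ``$\alpha$ commensurates $\Gamma$'' --- matches the paper's proof of Theorem~\ref{t:c}: density of $\Gamma\Gamma^*$ in $\widetilde{G}_S/\Gamma^*$ is incompatible with $\Gamma\Gamma^*$ being bounded, and in the closed case a countable, closed, bounded set is finite, giving commensurability. So the arithmetic half of your argument is sound and is exactly where the main body of the paper enters. But as written your proof establishes at most Theorem~\ref{t:c} (all local ranks $>1$), not Conjecture~\ref{c:b}; to claim the full conjecture you would need to resolve the product-of-trees case, i.e.\ Conjecture~\ref{c:d}.
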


Note that the assumption from Conjecture~\ref{c:b} that $\sum_{v \in S} \text{rank}_{K_v}(\mathbf{G})>1$ is equivalent to the finite generation of the group $\mathbf{G}(\mathcal{O}_S)$, and thus is required in order for $\mathcal{QI}(\mathbf{G}(\mathcal{O}_S))$ to be defined.

A quick application of the paper that this note is an appendix to is to provide a proof for a significant portion of Conjecture~\ref{c:b} in the form of the following theorem.

\begin{thm}\label{t:c} Suppose $K$ is a global function field, and
$\mathbf{G}$ is a connected, absolutely simple, $K$-isotropic, algebraic $K$-group of adjoint type.
If $\text{\emph{rank}}_{K_v}(\mathbf{G})>1$ for each $v \in S$, then
 there is an
isomorphism
$$ \mathcal{QI}(\mathbf{G}(\mathcal{O}_S) ) \cong \text{\emph{Comm}}_{\text{\emph{Aut}}(G_S)}(\mathbf{G}(\mathcal{O}_S))$$

\end{thm}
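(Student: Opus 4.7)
The plan is to adapt the scheme used in characteristic zero by Schwartz, Farb, Eskin, Taback, and Wortman to the function field setting, with the orbit closure dichotomy recorded immediately after Theorem~\ref{orbitclosure} standing in for the Ratner-theoretic input used there. Write $\Gamma = \mathbf{G}(\mathcal{O}_S)$ and $X_S = \prod_{v \in S} X_v$, where $X_v$ denotes the Bruhat-Tits building of $\mathbf{G}(K_v)$. Since $K$ is a function field and $\mathbf{G}$ is $K$-isotropic, $\Gamma$ is non-uniform in $G_S$; since $\mathrm{rank}_{K_v}(\mathbf{G}) \geq 2$ for every $v \in S$, each factor $X_v$ is a thick irreducible Euclidean building of rank at least two, placing us squarely in higher rank territory.

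First I would invoke reduction theory for $S$-arithmetic groups over function fields (due to Harder and his successors) to coarsely model $\Gamma$ as the neutered space $Y_S = X_S \setminus \bigsqcup_\alpha B_\alpha$, where the $B_\alpha$ are disjoint horoballs indexed by the $\Gamma$-conjugacy classes of proper $K$-rational parabolic subgroups of $\mathbf{G}$. A self-quasi-isometry $\phi : \Gamma \to \Gamma$ then transports, up to bounded error, to a quasi-isometry $\widetilde\phi : Y_S \to Y_S$. The horoball extension argument of Schwartz, combined with the higher rank rigidity of horoball boundaries, then shows that $\widetilde\phi$ coarsely permutes the family $\{B_\alpha\}$ and extends uniquely to a quasi-isometry $\Phi$ of the entire product building $X_S$.

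Next, the Kleiner--Leeb rigidity theorem applied to each factor $X_v$ asserts that $\Phi$ is at bounded sup-distance from a genuine isometry of $X_S$, which may be identified with an automorphism $\psi \in \mathrm{Aut}(G_S)$. The resulting assignment descends to a well-defined homomorphism $\mathcal{QI}(\Gamma) \to \mathrm{Aut}(G_S)$, and what remains is to see that the image lies in $\mathrm{Comm}_{\mathrm{Aut}(G_S)}(\Gamma)$. Let $\Gamma_2 = \psi(\Gamma)$; the coarse $\Gamma$-equivariance of $\phi$ forces $\Gamma_2 \subset \Gamma \cdot K$ for some compact $K \subset G_S$, so $\Gamma \cdot \Gamma_2 \subset \Gamma \cdot K$ is certainly not dense in $G_S$. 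The dichotomy stated just after Theorem~\ref{orbitclosure} (applied to $\Gamma_1 = \Gamma$ and $\Gamma_2$, both irreducible arithmetic lattices compatible with $\mathbf{G}$) then forces $\Gamma \cdot \Gamma_2$ to be a finite union of $\Gamma_2$-cosets, equivalently $\Gamma \cap \Gamma_2$ to have finite index in both. Hence $\psi$ commensurates $\Gamma$. Injectivity of the induced map $\mathcal{QI}(\Gamma) \to \mathrm{Comm}_{\mathrm{Aut}(G_S)}(\Gamma)$ follows from the fact that two isometries of $X_S$ at bounded sup-distance must coincide on the thick part, and surjectivity is immediate since every commensurator acts by quasi-isometries.

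The principal obstacle is precisely the commensuration step: extracting a genuine commensurator from what is only a coarse one. In characteristic zero Ratner's orbit closure theorem supplied exactly this input; in the function field setting, the missing ingredient is Theorem~\ref{orbitclosure}, whose proof via the joining classification of Theorem~\ref{joining} forms the bulk of this paper. The remaining geometric preliminaries -- Harder's reduction theory and the horoball extension argument -- require careful translation from the characteristic zero literature but introduce no essentially new difficulty, as the hypothesis $\mathrm{rank}_{K_v}(\mathbf{G}) \geq 2$ for each $v \in S$ ensures that every building factor is thick of rank at least two, where Kleiner--Leeb rigidity is available.
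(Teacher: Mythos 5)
Your proposal follows essentially the same route as the paper's proof. The paper compresses your entire geometric discussion (reduction theory, the neutered space, horoball extension, Kleiner--Leeb) into a single citation: Proposition 6.9 of \cite{W2} already states that $\mathcal{QI}(\mathbf{G}(\mathcal{O}_S))\cong \text{Aut}_{\text{Hd}}(G_S\,;\,\mathbf{G}(\mathcal{O}_S))$, the automorphisms moving $\Gamma$ a finite Hausdorff distance, and this is exactly where the hypothesis $\text{rank}_{K_v}(\mathbf{G})>1$ for all $v$ is consumed; so your sketch of that part is a re-derivation of the cited result rather than new content. The genuinely new step --- upgrading ``finite Hausdorff distance'' to ``commensurates'' via the closed-or-dense dichotomy for $\Delta(G)$-orbits, with density ruled out because $\Gamma\Gamma_2$ is a bounded subset of the unbounded space $G_S/\Gamma_2$ --- is identical to the paper's. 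One point you should repair: Theorem~\ref{orbitclosure} and the coset dichotomy derived from it are stated for $\bbg$ connected and \emph{simply connected}, whereas your $\mathbf{G}$ is of adjoint type, so you cannot apply the dichotomy directly to $\Gamma=\mathbf{G}(\mathcal{O}_S)$ and $\Gamma_2=\psi(\Gamma)$. The paper handles this by lifting: one passes to the universal cover $\text{Ad}:\widetilde{\mathbf{G}}\to\mathbf{G}$, replaces $\Lambda=\mathbf{G}(\mathcal{O}_S)$ by the lattice $\Gamma=\text{Ad}^{-1}(\Lambda)$ in $\widetilde{G}_S$ and $\psi$ by the induced automorphism $\widetilde{\psi}$, runs the dichotomy there to conclude $\Gamma$ and $\Gamma^*=\widetilde{\psi}(\Gamma)$ are commensurable, and then descends. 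This is a routine fix, but without it the key theorem of the paper does not literally apply to the groups in your argument.
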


\begin{proof} We denote by $\text{Aut}_{\text{Hd}}(G_S \,;\, \mathbf{G}(\mathcal{O}_S))$ the set of 
all $\psi \in \text{Aut}(G_S)$
such that the Hausdorff distance between $\mathbf{G}(\mathcal{O}_S)$ and
$\psi(\mathbf{G}(\mathcal{O}_S))  $ is finite. Proposition 6.9 of \cite{W2} states that 
$$\mathcal{QI}(\mathbf{G}(\mathcal{O}_S) ) \cong \text{Aut}_{\text{Hd}}(G_S \,;\, \mathbf{G}(\mathcal{O}_S)) $$ It is in the proof of this isomorphism from \cite{W2}  where the assumption that $\text{{rank}}_{K_v}(\mathbf{G})>1$ for each $v \in S$ is essential.

We want to show that $$\text{Aut}_{\text{Hd}}(G_S \,;\, \mathbf{G}(\mathcal{O}_S)) \cong  \text{Comm}_{\text{Aut}(G_S)}(\mathbf{G}(\mathcal{O}_S))$$ which proves the theorem.

Let $\Lambda =\mathbf{G}(\mathcal{O}_S)$. We may assume that $\Lambda $ is contained $G_S^+$, which is the subgroup of $G_S$ generated by the unipotent radicals of parabolic subgroups of the factor groups $\mathbf{G}(K_v)$. If we let $\text{Ad} : \mathbf{\widetilde{G}} \rightarrow \mathbf{G}$ be the universal covering of $\mathbf{G}$, then $\text{Ad} ^{-1} (\Lambda )$ is a lattice in $\widetilde{G}_S$. We denote  $\text{Ad} ^{-1} (\Lambda )$ by $\Gamma$.

Any $\psi \in \text{Aut}_{\text{Hd}}(G_S \,;\, \Lambda)$ corresponds to an automorphism $\widetilde{\psi} \in \text{Aut}(\widetilde{G}_S )$ that stabilizes $\Gamma$ up to finite Hausdorff distance. We let $\Gamma^*=
\widetilde{ \psi }( \Gamma)$.

By Theorem~\ref{orbitclosure}, ${\Delta( \widetilde{G}_S)(\Gamma, \Gamma ^*)}  $ is either closed or dense in $\widetilde{G}_S \times \widetilde{G}_S / \Gamma \times \Gamma ^*$

If it is dense, then for any $g \in \widetilde{G}_S$ there is a sequence $\{ g_k \}$ of elements in $\widetilde{G}_S$ such that $g_k \Gamma \to \Gamma$ and  $g_k \Gamma ^*\to g\Gamma^*$. Since $g_k \Gamma \to \Gamma$, there are $h_k \in  \widetilde{G}_S$ and $\gamma _k \in \Gamma$ such that $g_k=h_k \gamma _k$ and $h_k \to 1$. Therefore, $h_k^{-1}g_k \in \Gamma$ and $h_k^{-1}g_k \Gamma ^* \to g \Gamma *$. Notice that we have just proved that $\Gamma \Gamma ^*$ is dense in $\widetilde{G}_S / \Gamma^*$. But this is a contradiction. Indeed, the Huasdorff distance between $\Gamma$ and $\Gamma ^*$ in $\widetilde{G}_S$ is finite. Therefore, $\Gamma \Gamma ^*$ is a bounded subset of the unbounded space $\widetilde{G}_S / \Gamma^*$.

We are left to conclude that ${\Delta( \widetilde{G}_S)(\Gamma, \Gamma ^*)}  $ is closed, and we claim that $\Gamma \Gamma ^* \subseteq \widetilde{G}_S / \Gamma^*$ is closed. 

To prove our claim,  suppose we have a sequence $\{\gamma _k\} \subseteq \Gamma$ and a group element $g \in \widetilde{G}_S$ with $\gamma _k \Gamma ^* \to g \Gamma ^*$. Thus $\gamma _k(\Gamma, \Gamma ^*)\to (\Gamma ,  g \Gamma ^*)$ so that  $(\Gamma ,  g \Gamma ^*)= h (\Gamma ,  \Gamma ^*)$ for some $h \in \widetilde{G}_S $. Therefore, $h \in \Gamma$ and  $h \Gamma^* =g\Gamma^*$.

We have that, $\Gamma \Gamma ^*$ is a countable, closed, and bounded subset of $\widetilde{G}_S/\Gamma ^*$. Thus, $\Gamma \Gamma ^*$ is finite which is to say that $\Gamma$ and $\Gamma ^*$ are commensurable. It follows that $\psi$ commensurates $\Lambda$.

\end{proof}

Conjecture~\ref{c:b} has also been proved in the case when there are $v,w \in S$ such that $\text{rank}_{K_v}(\mathbf{G})=1$ and $\text{rank}_{K_w}(\mathbf{G})>1$. Indeed, the proof of the main theorem of \cite{W3} applies to this ``mixed rank" case essentially without modification. One simply needs to replace the role of the real semisimple Lie group factor in the proof from \cite{W3} with $\mathbf{G}(K_w)$ and replace the Borel reference for reduction theory of rank one arithmetic groups in real semisimple Lie groups with its counterpart for the positive characteristic case. (One account of this well-known counterpart can be found in \cite{BW}.) Otherwise, the proof needs no nonobvious modifications.

In light of Theorem~\ref{t:c} and the comment of the above paragraph, Conjecture~\ref{c:b} reduces to studying lattice actions on a product of trees:

\begin{conj}\label{c:d} Suppose $K$ is a global function field, and
$\mathbf{G}$ is a connected, absolutely simple, $K$-isotropic, algebraic $K$-group of adjoint type.
If $\text{\emph{rank}}_{K_v}(\mathbf{G})=1$ for all $v \in S$, and $|S|>1$, then
 there is an
isomorphism
$$ \mathcal{QI}(\mathbf{G}(\mathcal{O}_S) ) \cong \text{\emph{Comm}}_{\text{\emph{Aut}}(G_S)}(\mathbf{G}(\mathcal{O}_S))$$

\end{conj}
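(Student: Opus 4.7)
The plan is to follow the two-step architecture used in the proof of Theorem~\ref{t:c}: first reduce $\mathcal{QI}(\Lambda)$ (with $\Lambda=\mathbf{G}(\mathcal{O}_S)$) to $\text{Aut}_{\text{Hd}}(G_S;\Lambda)$, and then use the orbit closure dichotomy from Theorem~\ref{orbitclosure} to identify $\text{Aut}_{\text{Hd}}(G_S;\Lambda)$ with the commensurator. The second step transfers essentially verbatim from the proof of Theorem~\ref{t:c}, so the real content of the conjecture lies in the first step.

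\textbf{Step 1 (geometric reduction).} Since each $\mathbf{G}(K_v)$ has $K_v$-rank one, each factor acts on its Bruhat--Tits tree $T_v$, and $\Lambda$ acts diagonally on the product $T=\prod_{v\in S}T_v$. The first goal is to equip $\Lambda$ with an appropriate neutered space $X_\Lambda\subset T$ obtained by removing a $\Lambda$-invariant collection of horoballs at each $K$-rational cusp; by reduction theory in positive characteristic (as recorded in \cite{BW}), $X_\Lambda$ is coarsely $\Lambda$-cocompact, so any $q\in\mathcal{QI}(\Lambda)$ induces a quasi-isometry of $X_\Lambda$. One then wants to show that $q$ extends to a quasi-isometry of $T$ (equivalently $G_S$), and that any such quasi-isometry lies at bounded distance from an element of $\text{Aut}(G_S)$. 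The former is a ``horoball fattening'' argument using that each horoball sits in a single tree-factor direction while the others are unaffected, forcing $q$ to permute horoballs factor-preservingly and to extend canonically across them. The latter uses Kleiner--Leeb-type rigidity for products of locally finite (bi-regular) trees: a self-quasi-isometry of $T$ must preserve the product decomposition up to a permutation of factors and, on each factor, is a bounded distance from an isometry of $T_v$, which in turn arises from an element of $\mathbf{G}(K_v)$.

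\textbf{Step 2 (commensurability via Theorem~\ref{orbitclosure}).} Given $\psi\in\text{Aut}_{\text{Hd}}(G_S;\Lambda)$, pull back along the universal cover $\mathrm{Ad}:\widetilde{\mathbf{G}}\to\mathbf{G}$ to obtain lattices $\Gamma=\mathrm{Ad}^{-1}(\Lambda)$ and $\Gamma^*=\mathrm{Ad}^{-1}(\psi(\Lambda))$ in $\widetilde{G}_S$ with finite Hausdorff distance. Then $\Gamma\Gamma^*/\Gamma^*$ is a bounded subset of the unbounded homogeneous space $\widetilde{G}_S/\Gamma^*$. Apply Theorem~\ref{orbitclosure} to the orbit $\Delta(\widetilde{G}_S)\cdot(e\Gamma,e\Gamma^*)$: it is either closed or dense. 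Density would force $\Gamma\Gamma^*$ to be dense in $\widetilde{G}_S/\Gamma^*$, contradicting boundedness; closedness, combined with the countability of $\Gamma\Gamma^*$, forces $\Gamma\Gamma^*/\Gamma^*$ to be finite, so $\Gamma$ and $\Gamma^*$ are commensurable and $\psi$ commensurates $\Lambda$. The reverse containment is standard.

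\textbf{Main obstacle.} Step 2 is essentially a quotation from the proof of Theorem~\ref{t:c}; the genuinely new work is Step 1. Wortman's Proposition~6.9 of \cite{W2}, which identifies $\mathcal{QI}(\Lambda)$ with $\text{Aut}_{\text{Hd}}(G_S;\Lambda)$ when every factor has local rank at least two, relies decisively on higher-rank rigidity for the neutered space. In the mixed-rank case treated following \cite{W3}, the single higher-rank (or archimedean) factor is strong enough to carry out the extension. When $G_S$ is a \emph{pure} product of rank-one groups over positive-characteristic local fields, the extension must be performed entirely within the combinatorics of products of locally finite trees modulo a non-uniform lattice, where ambient ``Euclidean'' geometry is absent and horoballs are themselves subtrees. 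A successful implementation will likely require a positive-characteristic analog of Schwartz's boundary-at-infinity machinery for $\mathrm{SL}_2(\mathbb{Z}[1/p])$, adapted to several tree factors, together with a fine analysis of the cusp geometry of $\Lambda\backslash T$ sufficient to recognize factor-permuting tree isometries among all coarse self-maps of $X_\Lambda$. This quasi-isometric rigidity of products of trees modulo non-uniform lattice actions is the principal missing input, and in my view the deepest remaining ingredient of the conjecture.
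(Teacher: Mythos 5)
You have not proved the statement, and neither does the paper: this is stated as Conjecture~\ref{c:d}, explicitly left open. The appendix proves Theorem~\ref{t:c} only under the hypothesis $\text{rank}_{K_v}(\mathbf{G})>1$ for every $v\in S$, precisely because Proposition~6.9 of \cite{W2} (the identification $\mathcal{QI}(\Lambda)\cong\text{Aut}_{\text{Hd}}(G_S;\Lambda)$) requires that hypothesis; the pure rank-one case is singled out as the remaining open problem. Your Step~2 is indeed a correct quotation of the argument from the proof of Theorem~\ref{t:c} and would apply once Step~1 is available, and your diagnosis that Step~1 is the genuine obstruction agrees with how the paper frames the conjecture. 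But an outline whose key step is conceded to be ``the principal missing input'' is not a proof, so the statement remains unestablished.

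One point where your description of the gap is slightly off: the paper's closing remark records that \cite{W2} already proves that any quasi-isometry of $\mathbf{G}(\mathcal{O}_S)$ is induced by a \emph{factor-preserving} quasi-isometry of the product of Bruhat--Tits trees. So the ``horoball fattening'' and factor-preservation portions of your Step~1 are not the open content. What is genuinely missing is the final rigidity statement: that a factor-preserving self-quasi-isometry of $\prod_v T_v$ coarsely compatible with the non-uniform lattice is at bounded distance from an element of $\text{Aut}(G_S)$. This is exactly the positive-characteristic, several-factor analogue of Schwartz's rank-one rigidity, and it cannot be obtained from Kleiner--Leeb-type arguments as you suggest, since a self-quasi-isometry of a single locally finite tree is essentially never close to an isometry (the quasi-isometry group of a tree is enormous); the rigidity must come from the constraint imposed by the lattice and its cusp geometry, which is precisely what no one has carried out in this setting.
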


Note that \cite{W2} provides a step towards proving Conjecture~\ref{c:d} by proving that any quasi-isometry of $\mathbf{G}(\mathcal{O}_S)$ is induced by a factor-preserving quasi-isometry of the product of Bruhat-Tits trees corresponding to $G_S$.


\end{document}